\documentclass[a4paper,reqno,12pt]{amsart}

\usepackage[
  top=3.5cm,
  bottom=3.5cm,
  left=2cm,
  right=2cm
]{geometry}

\usepackage[T1]{fontenc}
\usepackage[utf8]{inputenc}
\usepackage[english]{babel}
\usepackage{datetime}
\DeclareUnicodeCharacter{0304}{\={}}

\usepackage{amsmath,amsthm,amssymb,amsfonts,mathtools}
\usepackage[textwidth=2cm]{todonotes}
\usepackage{float}
\usepackage{enumitem}
\setlist[enumerate]{leftmargin=*}
\setlist[itemize]{leftmargin=*}
\usepackage{wrapfig}
\usepackage{comment}
\usepackage{xcolor}
\usepackage[numbers]{natbib}

\usepackage{hyperref}
\hypersetup{
  colorlinks = true,
  urlcolor   = magenta,
  linkcolor  = blue,
  citecolor  = red
}

\numberwithin{equation}{section}

\newtheorem{theorem}{Theorem}[section]
\newtheorem{lemma}[theorem]{Lemma}
\newtheorem{corollary}[theorem]{Corollary}
\newtheorem{proposition}[theorem]{Proposition}

\theoremstyle{definition}

\newtheorem{remark}[theorem]{Remark}

\newcommand{\RR}{\mathbb{R}}
\newcommand{\NN}{\mathbb{N}}

\newcommand{\ee}{\mathrm{e}}
\newcommand{\dd}{\mathrm{d}}
\newcommand{\SF}{\mathcal{F}_\sigma}
\newcommand{\Scal}{\mathcal{S}}
\newcommand{\BB}{\mathcal{B}}

\newcommand{\Id}{\mathrm{Id}}
\newcommand{\defeq}{:=}
\newcommand{\eqdef}{=:}
\newcommand{\tr}{\operatorname{tr}}

\newcommand{\PR}{\Psi_R}

\makeatletter
\def\@setaddresses{}
\makeatother

\excludecomment{liucomment}

\begin{document}
\title[Quantitative Fourier Restriction Estimates for Weyl Operators]
{Quantitative Fourier Restriction Estimates for Weyl Operators: Fourier-Support Dependence and Lower Bounds}
\author{Jie Liu}

\keywords{Fourier restriction estimate, Schatten class, quantum harmonic analysis, Weyl operator, Hermite--Laguerre correspondence, Laguerre function, symplectic Fourier transform, phase space}

\subjclass[2020]{Primary: 42B10, 47B10; Secondary: 22E30}

	\begin{abstract}
The Weyl calculus associates a function \(a\) on phase space \(\RR^{2d}\) with the corresponding Weyl operator \(L_a\) acting on \(L^2(\RR^d)\). At \(p=2\), this correspondence is governed by an exact Hilbert--Schmidt identity. For \(p\neq2\), two-sided \(L^p\)--Schatten estimates are known for Paley--Wiener type symbols, with constants depending on the Fourier-support scale. We study this quantitative dependence, improve the known upper bounds, and show that in large ranges of \(p\) no support-independent global comparison can hold.

Let \(\SF\) denote the symplectic Fourier transform, and let \(u\in\mathcal E'(\RR^{2d})\) satisfy \(\operatorname{supp}u\subset\overline{B(z_0,R)}\), where \(R\geq1\). Then, for every \(1\leq p\leq\infty\) and every \(\varepsilon>0\), we prove
\[
\|L_{\SF u}\|_{\Scal_p}\lesssim_{d,p,\varepsilon}R^{(2d+1+\varepsilon)|1-2/p|}\|\SF u\|_{L^p(\RR^{2d})},
\]
together with the reverse estimate with the same power of \(R\). This sharpens the earlier exponential dependence \(\ee^{cR^2}\) obtained by Luef and Samuelsen and improves M\"uller's polynomial loss \(R^{(5d+2)|1-2/p|}\).  The main ingredient is a radial trace-class estimate based on the Hermite--Laguerre correspondence \(\rho(\varphi_k)=P_k\), which reduces the relevant Weyl operators to finite-rank Hermite projections. 

We also show that dependence on $R$ is unavoidable. Compactly supported examples obtained by truncating Laguerre functions yield polynomial lower bounds for the best comparison constants. These examples refine M\"uller's operator-norm example and give nontrivial lower bounds for a larger range of Schatten exponents, which can cover the full range \(1\le p\le\infty\) except for the Hilbert--Schmidt point \(p=2\) as  \(d\to\infty\). Consequently, in these ranges the corresponding \(L^p\)--Schatten estimate cannot hold globally with a constant independent of the Fourier support.

Moreover, for every fixed \(p>2\), the exponent in the reverse comparison estimate is asymptotically optimal as \(d\to\infty\).
\end{abstract}

    \maketitle
    
 \section{Introduction}\label{Section-intro}

The Weyl calculus provides a correspondence between functions on phase space and operators on $L^2(\RR^d)$; see, for instance, \cite{F89,G01,W84,T93}. At the Hilbert--Schmidt endpoint this correspondence is governed by the exact identity
\[
    \|L_a\|_{\Scal_2}=(2\pi)^{-d/2}\|a\|_{L^2(\RR^{2d})};
\]
see, for example, \cite[Theorem~1.2.1]{T93}.
For \(p\neq2\), this exact identity does not extend to arbitrary symbols. Previous results \cite{MV24,LS25,M26} show, however, that two-sided \(L^p\)--Schatten estimates are available under a compact Fourier-support assumption, with constants depending on the Fourier-support scale. The main purpose of this paper is to improve the known quantitative dependence on this scale and to construct explicit examples showing that such dependence cannot, in general, be eliminated.

We formulate the problem using the symplectic Fourier transform. For $z=(x,\xi)$ and $w=(y,\eta)$ in $\RR^{2d}$, let
\[
    \sigma(w,z)=\eta\cdot x-y\cdot\xi
\]
be the \emph{standard symplectic form}, and define
\[
    \SF F(w)=(2\pi)^{-d}\int_{\RR^{2d}}F(z)\ee^{-i\sigma(w,z)}\,\dd z.
\]
We also denote by $\rho$ the \emph{Schr\"odinger representation} of phase space,
\[
    [\rho(x,\xi)g](t)=\ee^{-\frac{i}{2}x\cdot\xi}\ee^{it\cdot\xi}g(t-x),
    \,\,\, g\in L^2(\RR^d),
\]
and, for a suitable function $F$ on $\RR^{2d}$, set
\[
    \rho(F)=(2\pi)^{-d}\int_{\RR^{2d}}F(z)\rho(z)\,\dd z.
\]
This construction extends in the distributional sense to $u\in\Scal'(\RR^{2d})$. With our normalization, it is related to \emph{Weyl quantization} by
\[
    \rho(u)=L_{\SF u};
\]
see, for instance, \cite{F89,G01,W84}. Consequently, if $u\in\mathcal E'(\RR^{2d})$, then $\SF u$ is a Paley--Wiener type symbol and $L_{\SF u}$ is a \emph{Weyl operator} whose symbol has compact Fourier support. The radius of $\operatorname{supp}u$ is precisely the Fourier-support scale that enters our estimates.

The problem is closely connected with Fourier restriction theory, which is one of the central themes in modern harmonic analysis. In its classical form, given a non-negative Radon measure $\mu$ on $\RR^n$, one asks for which exponents $p,q$ the estimate
\[
    \|\widehat f\|_{L^q(\mu)}\lesssim \|f\|_{L^p(\RR^n)}
\]
holds for all $f\in\Scal(\RR^n)$. By duality, this is equivalent to an extension estimate. If $G\in L^{q'}(\mu)$, define
\begin{equation}\label{Def-classicalextension}
    \mathcal E_\mu G(x)
    =\int_{\RR^n}\ee^{ix\cdot\xi}G(\xi)\,\dd\mu(\xi),
    \,\,\,x\in\RR^n.
\end{equation}
Then the restriction estimate is equivalent to
\begin{equation}\label{Est-classicalextension}
    \|\mathcal E_\mu G\|_{L^{p'}(\RR^n)}
    \lesssim \|G\|_{L^{q'}(\mu)}.
\end{equation}
Thus restriction estimates may also be viewed as bounds for oscillatory integral operators. 

Classical restriction theory is driven by the interaction between oscillation and the geometry of the underlying measure, and it is closely connected with Bochner--Riesz problems, geometric measure theory, and
dispersive partial differential equations; see, for instance, \cite{F70,S86,T75,T99}. For curves in the plane with non-vanishing curvature, the sharp restriction theorem goes back to Zygmund \cite{Z74}. For smooth
hypersurfaces with non-vanishing Gaussian curvature, the fundamental $L^2$-restriction estimate is given by the Stein--Tomas theorem \cite{T75,S86}. Sharp results for finite-type hypersurfaces in three dimensions were obtained by Ikromov, Kempe and M\"uller, and by Ikromov and M\"uller, using Newton polyhedra and uniform oscillatory integral estimates; see \cite{IKM10,IM11,IM16}. Restriction and extension estimates are also closely related to Strichartz estimates and pointwise convergence problems for dispersive equations \cite{ST77,MVV96}. Beyond smooth hypersurfaces, the theory has been developed for fractal measures \cite{M00}, while more recent advances have been driven by multilinear methods, polynomial partitioning, and decoupling \cite{BCT06,G16,G18,D20}.

In the phase-space setting, the classical extension operator admits a natural operator-valued analogue.

Indeed, if $\mu$ is a compactly supported measure on $\RR^{2d}$, define the \emph{symplectic Fourier extension operator} by
\begin{equation}\label{Def-symplecticextension}
    \mathcal E_\sigma G=\SF(G\mu)
\end{equation}
and the \emph{operator-valued extension operator} by
\[
    \mathcal E_WG=\rho(G\mu)=L_{\SF(G\mu)}=L_{\mathcal E_\sigma G}.
\]
The corresponding \emph{Schatten-class restriction estimate} is
\begin{equation}\label{Est-Weylextension}
    \|L_{\mathcal E_\sigma G}\|_{\Scal_{p'}}\lesssim \|G\|_{L^{q'}(\mu)}.
\end{equation}
Luef and Samuelsen proved that \eqref{Est-Weylextension} holds if and only if its classical counterpart
\begin{equation}\label{Est-symplecticextension}
    \|\mathcal E_\sigma G\|_{L^{p'}(\RR^{2d})} \lesssim \|G\|_{L^{q'}(\mu)}
\end{equation}
holds; see \cite[Theorem~1.1]{LS25}. Since the symplectic phase differs from the Euclidean Fourier phase only by a linear change of variables, \eqref{Est-symplecticextension} is equivalent to \eqref{Est-classicalextension} with $n=2d$. Their proof uses Gaussian windows and the Cohen class of an operator. If $\operatorname{supp}\mu$ is contained in a ball of radius $R$, the resulting equivalence constants have exponential dependence of the form $\ee^{cR^2}$; see
\cite[Proof of Theorem~1.1]{LS25}.

M\"uller subsequently gave a more structural version of this equivalence \cite{M26}. If $u\in\mathcal E'(\RR^{2d})$ is supported in a ball of radius $R$, then
\[
    \|L_{\SF u}\|_{\Scal_p}\simeq_R
    \|\SF u\|_{L^p(\RR^{2d})},
    \,\,\, 1\leq p\leq\infty.
\]
This extends the result of Luef and Samuelsen from measures to arbitrary compactly supported distributions and replaces the exponential dependence by a polynomial one. More precisely, for $R\geq1$, the loss obtained in \cite[Theorem~4.2 and Remark~4.3]{M26} is of order \(R^{(5d+2)|1-2/p|}.\)

Related works of Mishra and Vemuri treat Weyl transforms of measures, smooth measures on real-analytic submanifolds, and compactly supported distributions; see \cite{MV23,MV24,MV25}. In particular, \cite[Theorem~1]{MV24} proves a two-sided comparison for compactly supported distributions, with constants depending on the support, but without making the dependence on its size explicit.

Our first result sharpens the polynomial dependence in M\"uller's result in \cite{M26}.

\begin{theorem}\label{Mainthm}
Let $u\in\mathcal E'(\RR^{2d})$ and assume that \(\operatorname{supp}u\subset\overline{B(z_0,R)}\)
for some $z_0\in\RR^{2d}$ and $R\geq1$. Then, for every
$1\leq p\leq\infty$ and every $\varepsilon>0$,
\begin{equation}\label{Maininequality1}
    \|\SF u\|_{L^p(\RR^{2d})} \lesssim_{d,p,\varepsilon} R^{(2d+1+\varepsilon)\left|1-\frac{2}{p}\right|} \|L_{\SF u}\|_{\Scal_p},
\end{equation}
and
\begin{equation}\label{Maininequality2}
    \|L_{\SF u}\|_{\Scal_p}\lesssim_{d,p,\varepsilon} R^{(2d+1+\varepsilon)\left|1-\frac{2}{p}\right|}\|\SF u\|_{L^p(\RR^{2d})}.
\end{equation}
The implicit constants are independent of $u$, $z_0$, and $R$.
\end{theorem}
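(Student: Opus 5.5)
The plan is to prove both inequalities at the endpoints $p=1$ and $p=\infty$ and then interpolate against the exact Hilbert--Schmidt identity at $p=2$. First I reduce to $z_0=0$. Translating $u$ by $z_0$ multiplies $\SF u$ by a unimodular character. On the operator side this corresponds to conjugation, or multiplication, by $\rho(z_0)$, which leaves all $L^p$ and $\Scal_p$ norms unchanged.

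\textbf{Reproducing-kernel representation.} Fix a radial Schwartz function $\psi_R$ on $\RR^{2d}$ with $\psi_R\equiv1$ on $B(0,R)$ and support in $B(0,2R)$. Then $u=\psi_R u$. For the operator side this gives
\[
L_{\SF u}=\rho(u)=\rho(\psi_R u)=L_{\SF u\,\natural\,\SF\psi_R},
\]
where $\natural$ is the twisted convolution. Conversely, on the function side one writes $\SF u=\SF u * \SF\psi_R$ (with the normalizations adjusted), and $\SF u$ is recovered from the operator through a "quantum convolution" $\SF u=L_{\SF u}\star A_R$ with $A_R=\rho(\psi_R)$. Here I use Werner's quantum harmonic analysis. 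Young's inequalities for these mixed convolutions give
\[
\|L_{\SF u}\|_{\Scal_1}\lesssim \|\SF\psi_R\|_{L^1}\,\|\SF u\|_{L^1},
\qquad
\|\SF u\|_{L^1}\lesssim \|A_R\|_{\Scal_1}\,\|L_{\SF u}\|_{\Scal_1},
\]
and the analogous bounds at $p=\infty$ with the same constants. The function-side constant $\|\SF\psi_R\|_{L^1}$ is $O(1)$ by scaling. So everything hinges on bounding the trace norm $\|\rho(\psi_R)\|_{\Scal_1}$ for a radial cutoff at scale $R$.

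\textbf{Main obstacle: the radial trace-class estimate.} I aim to show
\[
\|\rho(\psi_R)\|_{\Scal_1}\lesssim_\varepsilon R^{2d+1+\varepsilon}.
\]
The route I have in mind is the Hermite--Laguerre correspondence $\rho(\varphi_k)=P_k$, where $\varphi_k$ are the (normalized) Laguerre functions $L_k^{d-1}(|z|^2/2)e^{-|z|^2/4}$ and $P_k$ is the projection onto Hermite functions of degree $k$, with $\operatorname{rank}P_k=\binom{k+d-1}{d-1}\sim k^{d-1}$. A radial $\psi_R$ expands as $\psi_R=\sum_k c_k\varphi_k$, with $c_k=\langle\psi_R,\varphi_k\rangle/\|\varphi_k\|^2$. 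Hence
\[
\|\rho(\psi_R)\|_{\Scal_1}=\sum_k |c_k|\,\operatorname{rank}P_k .
\]
Because the Laguerre functions $\varphi_k$ oscillate at frequency about $\sqrt k$ and live at radius about $\sqrt k$, the coefficients $c_k$ should be negligible for $k\gg R^{2+\varepsilon'}$ by smoothness of $\psi_R$. I would get this by integrating by parts against the Laguerre differential operator, using $\|\Delta^N\psi_R\|\lesssim R^{-2N}\cdot$volume. For $k\lesssim R^2$, the coefficient $c_k$ is essentially the Laguerre moment $\int\psi_R\varphi_k$ divided by $\|\varphi_k\|^2\sim k^{d-1}$. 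Rather than estimating each $|c_k|$ crudely, I would apply Cauchy--Schwarz with Plancherel, $\sum|c_k|^2\|\varphi_k\|^2=\|\psi_R\|_2^2\sim R^{2d}$. Over $K\sim R^{2+\varepsilon}$ terms this yields
\[
\sum_{k\le K}|c_k|\,k^{d-1}\le \Bigl(\sum|c_k|^2 k^{d-1}\Bigr)^{1/2}\Bigl(\sum_{k\le K} k^{d-1}\Bigr)^{1/2}\lesssim R^{d}\,K^{d/2}\approx R^{2d+\varepsilon'}.
\]
The $\varepsilon$ and any extra factor $R$ come from the tail and from the cutoff of the $k$-range. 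Getting the exponent exactly $2d+1+\varepsilon$ is the delicate step, and I would first try this Cauchy--Schwarz count.

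\textbf{Interpolation.} By Hilbert--Schmidt, the maps $\SF u\mapsto L_{\SF u}$ and their inverses are isometries up to $(2\pi)^{-d/2}$ at $p=2$. At $p=1,\infty$ they are bounded with constant $R^{2d+1+\varepsilon}$. Riesz--Thorin-type complex interpolation between $L^p$ and $\Scal_p$ applies because the maps are linear, defined via the fixed projection $u\mapsto\psi_R u$, so the restriction to Paley--Wiener data is harmless. Interpolating then gives the constant $R^{(2d+1+\varepsilon)|1-2/p|}$ in both \eqref{Maininequality1} and \eqref{Maininequality2}.
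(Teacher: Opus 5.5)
Your overall architecture is the paper's. You use Werner--Young at $p=1,\infty$ with the fixed operator $\rho(\psi_R)$ (your $\psi_R$ is the paper's $\chi_R$), a radial Laguerre bound for $\|\rho(\psi_R)\|_{\Scal_1}$, and interpolation of everywhere-defined regularized maps against Hilbert--Schmidt.

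\textbf{An error in the endpoint bound for \eqref{Maininequality2}.} The bound you state for the direction symbol $\to$ operator is false. If $\|L_{\SF u}\|_{\Scal_1}\lesssim\|\SF\psi_R\|_{L^1}\|\SF u\|_{L^1}$ held with $\|\SF\psi_R\|_{L^1}=O(1)$, take $u=\psi_{R/2}$. Then $\|\SF u\|_{L^1}=\|\SF\psi_1\|_{L^1}$ by scaling, while $\|L_{\SF u}\|_{\Scal_1}\ge\|\rho(\psi_{R/2})\|_{\Scal_2}=(2\pi)^{-d/2}\|\psi_{R/2}\|_{L^2}\simeq R^d$, a contradiction. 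The $p=\infty$ analogue is excluded by Corollary~\ref{Cor-nontrivial}(2).

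The fix is as follows. The identity $\rho(\psi_R u)=L_{\phi*\SF u}$, with $\phi=(2\pi)^{-d}\SF\psi_R$, involves the ordinary convolution \eqref{Conv-sym}; twisted convolution corresponds to operator composition instead. By covariance of the Weyl calculus, $L_{g*\phi}=\int g(w)\rho(w)L_\phi\rho(w)^*\,\dd w$ is a function--operator Werner convolution. Young's inequality then gives $\|L_{g*\phi}\|_{\Scal_p}\lesssim\|g\|_{L^p}\|L_\phi\|_{\Scal_1}$, with $\|L_\phi\|_{\Scal_1}=(2\pi)^{-d}\|\rho(\psi_R)\|_{\Scal_1}$. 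So both directions are controlled by the same trace norm, not by $\|\SF\psi_R\|_{L^1}$. Your interpolation step only uses a constant $R^{2d+1+\varepsilon}$ at both endpoints, so the proof survives this correction. The paper reaches the same endpoint differently, by identifying $g\mapsto L_{g*\SF\chi_R}$ as a Banach adjoint (Theorem~\ref{Thm-dualityPsi}).

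\textbf{The tail estimate.} You must integrate by parts with $\mathcal{H}_{2d}=-\Delta+\frac14|z|^2$, of which $\varphi_k$ is an eigenfunction by \eqref{Rela-eigenvalue}; $\varphi_k$ is not an eigenfunction of $\Delta$. The relevant size is $\|\mathcal{H}_{2d}^N\psi_R\|_{L^2}\lesssim R^{2N+d}$, because the potential dominates on $B(0,2R)$; it is not $R^{-2N}$ times volume. This gives $|(\psi_R,\varphi_k)|\le(2k+d)^{-N}\|\mathcal{H}_{2d}^N\psi_R\|_2\|\varphi_k\|_2\lesssim (R^2/k)^N R^d k^{(d-1)/2}$. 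Hence the modes $k>R^{2+\varepsilon'}$ contribute $O(1)$ for $N$ large, which confirms your threshold.

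\textbf{Your count gives a better exponent than the paper.} With the tail handled, your Cauchy--Schwarz/Bessel count on $k\le K$ is correct and already yields $\|\rho(\psi_R)\|_{\Scal_1}\lesssim R^{2d+\varepsilon}$. There is no missing factor of $R$ to recover; you overshoot the target. This route is genuinely sharper than Lemma~\ref{Lem-S1normN2norm}. That lemma applies the single weighted bound $|(f,\varphi_k)|\le(1+2k+d)^{-s/2}\|(1+\mathcal{H}_{2d})^{s/2}f\|_2\|\varphi_k\|_2$ to every $k$. It needs $s>d+1$ for summability, and so pays $\|(1+\mathcal{H}_{2d})^{s/2}\chi_R\|_2\simeq R^{d+s}=R^{2d+1+\varepsilon}$. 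Splitting at $k\approx R^2$ and using only $\|\psi_R\|_2\simeq R^d$ on the low modes saves one power of $R$. Applying Bessel's inequality to $\mathcal{H}_{2d}^N\psi_R$ on dyadic blocks $k\sim 2^jR^2$ even removes the $\varepsilon$.

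So your corrected argument gives the theorem with $R^{(2d+\varepsilon)|1-2/p|}$, which implies the stated bound. What the paper's lemma buys in exchange is a clean inequality in terms of a Schwartz seminorm, valid for every radial $f$. It is reused in Proposition~\ref{Prop-explicitexample} to control the truncation error $\|u-\widetilde u\|_{(d+2,2)}$; your bound, by contrast, is adapted to the scale-$R$ cutoff.
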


Thus the exponent $(5d+2)|1-2/p|$ is replaced by $(2d+1+\varepsilon)|1-2/p|$. In particular, at the trace-class and operator-norm endpoints the loss improves from $R^{5d+2}$ to $R^{2d+1+\varepsilon}$. At $p=2$ the power vanishes, in agreement with the exact Hilbert--Schmidt identity in Theorem \ref{Thm-schmidthilbert}.

The main improvement occurs at the trace-class endpoint. M\"uller's argument uses the full Hermite expansion; see \cite[Lemma~3.2]{M26}. We instead exploit the radial Hermite--Laguerre correspondence
\[
    \rho(\varphi_k)=P_k,\,\,\, \varphi_k(z)\defeq L_k^{d-1}\!\left(\frac{|z|^2}{2}\right)\ee^{-|z|^2/4},
\]
where $P_k$ is the orthogonal projection onto the $k$-th Hermite eigenspace. For a radial function, the Weyl operator is therefore diagonal with respect to the Hermite decomposition, and its trace norm reduces to a weighted sum of the ranks of the finite-rank projections $P_k$. This yields the sharper radial estimate in Lemma~\ref{Lem-S1normN2norm}. Interpolation with the Hilbert--Schmidt identity in Theorem~\ref{Thm-schmidthilbert} gives the regularized comparison in Theorem~\ref{Thm-oneway}. The opposite direction is then obtained through Schatten duality and Werner convolution in Theorem~\ref{Thm-dualityPsi}, which differs from M\"uller's approach in \cite{M26}, where the Calder\'on--Vaillancourt theorem is used to provide a separate operator-norm endpoint before interpolation. The relevant Hermite and Laguerre background is recalled in Section~\ref{Subsection-backgroundhermitelaguerre}.

Our second objective is to determine whether the dependence on $R$ is merely a feature of the proof or a genuine obstruction. Before the present work, the available evidence was essentially restricted to the operator-norm endpoint: \cite[Example~4.4]{M26} gives a nontrivial lower bound for
\eqref{Maininequality2} at $p=\infty$. We construct compactly supported examples showing polynomial growth of the best constants in substantially larger ranges of Schatten exponents, and also obtain lower bounds for the reverse comparison \eqref{Maininequality1}.

For $R\geq1$, define
\begin{equation}\label{Def-CpR}
\begin{split}
C_{d,p}(R)\defeq\sup\bigg\{\frac{\|L_{\SF u}\|_{\Scal_p}}   {\|\SF u\|_{L^p(\RR^{2d})}}: u\in\mathcal E'(\RR^{2d}),\,\operatorname{supp}u\subset\overline{B(0,2R)},\, \SF u\neq0\bigg\},
\end{split}
\end{equation}
and
\begin{equation}\label{Def-DpR}
\begin{split}
D_{d,p}(R)\defeq\sup\bigg\{\frac{\|\SF u\|_{L^p(\RR^{2d})}}  {\|L_{\SF u}\|_{\Scal_p}}: u\in\mathcal E'(\RR^{2d}),\,\operatorname{supp}u\subset\overline{B(0,2R)}, \,0<\|L_{\SF u}\|_{\Scal_p}<\infty\bigg\}.
\end{split}
\end{equation}
We measure their polynomial growth by
\begin{equation}\label{Def-gammaeta}
\gamma_{d,p}\defeq\inf\left\{s\geq0:\sup_{R\geq1}\frac{C_{d,p}(R)}{R^s}<\infty\right\},\,\,\,
\eta_{d,p}\defeq\inf\left\{s\geq0:\sup_{R\geq1}\frac{D_{d,p}(R)}{R^s}<\infty\right\}.
\end{equation}
Since the estimates in Theorem~\ref{Mainthm} hold for every \(\varepsilon>0\), the definition of the growth exponents implies
\[
    \gamma_{d,p},\eta_{d,p} \leq(2d+1)\left|1-\frac2p\right|\eqdef  U_{d,p}.
\]
The lower bounds are obtained from truncated Laguerre functions in Proposition \ref{Prop-explicitexample}. The identity $\rho(\varphi_k)=P_k$ makes the Schatten norm explicit, while known
$L^p$-asymptotics for $\varphi_k$ determine the size of the phase-space norm. After cutting off at radius comparable to $\sqrt{k}$, the discarded tail is exponentially small. This leads to the following result.

\begin{theorem}\label{Thm-nontrivial}
Assume $d\geq2$ and let \(p_c\defeq\frac{4d}{2d-1}\).
For every $1\leq p\leq\infty$ with $p\neq p_c$,
\begin{equation}\label{For-etagamma}
\begin{aligned}
    \underline{\gamma_{d,p}}\defeq\max\{a_{d,p'},-a_{d,p},0\}
    &\leq\gamma_{d,p}\leq U_{d,p},\\
    \underline{\eta_{d,p}}\defeq\max\{a_{d,p},0\}
    &\leq\eta_{d,p}\leq U_{d,p},
\end{aligned}
\end{equation}
where
\begin{equation}\label{Def-ap}
    a_{d,p}\defeq
    \begin{cases}
    2(d-1)-\dfrac{2(2d-1)}{p},
        & p_c<p\leq\infty,\\[6pt]
    \dfrac{2}{p}-1,
        & 1\leq p<p_c.
    \end{cases}
\end{equation}
\end{theorem}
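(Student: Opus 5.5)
The upper bounds \(\gamma_{d,p},\eta_{d,p}\le U_{d,p}\) follow directly from Theorem~\ref{Mainthm}, as already noted after \eqref{Def-gammaeta}. The substance is the lower bounds, and I would get them from truncated Laguerre functions. For \(k\ge1\) fix a smooth radial cutoff \(\chi\) equal to \(1\) on \(B(0,1)\) and supported in \(B(0,2)\), and set
\[
u_k\defeq \chi\bigl(\cdot/(c\sqrt k)\bigr)\varphi_k,
\]
where \(c\) is slightly larger than \(2\). The oscillatory region of \(\varphi_k\) is \(|z|^2/2\lesssim 4k\), that is, \(|z|\lesssim 2\sqrt{2k}\), so \(c\) is chosen with \(c\sqrt k\) beyond this turning point. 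Then \(\operatorname{supp}u_k\subset\overline{B(0,2R)}\) with \(R\simeq\sqrt k\). Outside the turning point \(\varphi_k\) decays like \(e^{-\delta k}\), so the tail \(r_k=\varphi_k-u_k\) has every Schwartz seminorm \(O(e^{-\delta k})\).

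The two norms are then computed as follows.

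\emph{Operator side.} Since \(\rho(u_k)=P_k-\rho(r_k)\), we get \(\|\rho(u_k)\|_{\Scal_p}=(\dim P_k)^{1/p}+O(e^{-\delta k})\simeq k^{(d-1)/p}\). Here \(\dim P_k=\binom{k+d-1}{d-1}\), and the error is controlled by the trace-class bound for Schwartz symbols.

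\emph{Phase-space side.} The symplectic Fourier transform of a radial function is, up to normalization, its Euclidean Fourier transform, and \(\varphi_k\) is an eigenfunction of it with eigenvalue \(\pm1\) (Laguerre functions are Hermite-type eigenfunctions). Hence \(\SF u_k=\pm\varphi_k-\SF r_k\), and \(\SF r_k\) is exponentially small in \(L^p\) for every \(p\). So it suffices to know \(\|\varphi_k\|_{L^p(\RR^{2d})}\). I would take the Askey--Wainger/Markett asymptotics for \(L^p\) norms of Laguerre functions \(L_k^{\alpha}(x)e^{-x/2}\) with weight \(x^{\alpha}\), \(\alpha=d-1\). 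With the change of variables \(x=|z|^2/2\) and \(k^{\alpha}\)-normalizations accounted for, these should give
\[
\|\varphi_k\|_{L^p}\simeq k^{\beta(p)}, \qquad
\beta(p)=\begin{cases} d-1-\dfrac{d}{p}, & p>p_c,\\[4pt] \dfrac{d-1}{2}+\dfrac{1}{2p}-\dfrac14\cdot(\text{something}), & p<p_c,\end{cases}
\]
with a logarithm at \(p=p_c\), which is why \(p_c\) is excluded. The large-\(p\) regime is dominated by the peak near the origin, where \(|\varphi_k(0)|\simeq k^{d-1}\) on a region of size \(k^{-1/2}\). The small-\(p\) regime is dominated by the oscillatory bulk, where the amplitude is about \(k^{(d-1)/2}\cdot k^{-1/4}|z|^{\dots}\).

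\emph{Exponents.} The ratio gives \(\|\SF u_k\|_p/\|L_{\SF u_k}\|_{\Scal_p}\simeq k^{\beta(p)-(d-1)/p}=R^{2\beta(p)-2(d-1)/p}\). Checking \(2\beta(p)-2(d-1)/p=a_{d,p}\): for \(p>p_c\), \(2(d-1)-2d/p-2(d-1)/p=2(d-1)-2(2d-1)/p\), which matches. For \(p<p_c\) the bulk computation should yield \(2/p-1\), and I would verify this as a consistency check at \(p=2\), where the ratio must be constant (\(a_{d,2}=0\)). This gives \(\eta_{d,p}\ge\max\{a_{d,p},0\}\). The same examples give \(C_{d,p}(R)\gtrsim R^{-a_{d,p}}\), hence \(\gamma_{d,p}\ge -a_{d,p}\). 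The bound \(\gamma_{d,p}\ge a_{d,p'}\) then follows by duality: \(D_{d,p'}(R)\) and \(C_{d,p}(R)\) are comparable. To see this, pair \(\SF u\) against \(\SF v\) in \(L^{p}\)–\(L^{p'}\) and \(L_{\SF u}\) against \(L_{\SF v}\) in \(\Scal_p\)–\(\Scal_{p'}\), using \(\operatorname{tr}(L_aL_b^*)=(2\pi)^{-d}\int a\bar b\). The only subtlety is that the norming element of \(L^{p'}\) must be replaced by one with Fourier support in a comparable ball. I would handle that by testing directly against the example \(\varphi_k\) itself, whose extremizing behaviour is explicit, rather than through an abstract duality argument. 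A lower bound \(\ge0\) is trivial.

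\emph{Main obstacle.} The hardest step is getting the precise \(L^p\) asymptotics of \(\varphi_k\) in \(2d\) real dimensions, with exactly the right power in each regime, including the \(k^{\alpha}\) normalization and the transition at \(p_c=4d/(2d-1)\). The exponential smallness of the truncation tails in both Schatten and \(L^p\) norms also needs care: this requires \(c\) beyond the turning point \(2\sqrt2\), so that both the tail and its symplectic Fourier transform are controlled.
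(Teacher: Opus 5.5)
Your outline follows the paper's proof: truncated $\varphi_k$ at radius $\simeq\sqrt k$, $\rho(\varphi_k)=P_k$ on the Schatten side, Markett's asymptotics on the $L^p$ side, exponentially small tails, and duality for the term $a_{d,p'}$. However, the step that gives $\gamma_{d,p}\ge a_{d,p'}$ does not work as you propose it. Testing ``against $\varphi_k$ itself'' only reproduces the direct bound. With $h=\SF u_k$, trace duality gives $\|L_{\SF u_k}\|_{\Scal_p}/\|\SF u_k\|_{L^p}\ge d_k^{1/p}/\|\SF u_k\|_{L^p}$, which is the ratio $\simeq R^{-a_{d,p}}$ you already have. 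To see $a_{d,p'}$ you must pair with a function that nearly norms $\SF u_k$ in $L^{p'}$, such as $|\SF u_k|^{p'-2}\SF u_k$. That function concentrates at the origin on scale $k^{-1/2}$ and has no compact Fourier support, so it is not an admissible competitor for $C_{d,p}$.

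The missing step is the regularization you set aside, and it costs nothing. Suppose $\operatorname{supp}u\subset\overline{B(0,2R)}$; then $(2\pi)^{-d}\SF u*\psi_{2R}=\SF u$. Since $\psi_{2R}$ is real and even, replacing $h$ by $g=(2\pi)^{-d}h*\psi_{2R}$ leaves $\int\SF u\,\overline h$ unchanged. Moreover $\SF g=\chi_{2R}\SF h$ is supported in $\overline{B(0,4R)}$, and $\|g\|_{L^p}\lesssim\|\SF\chi\|_{L^1}\|h\|_{L^p}$. Hence
$|\int\SF u\,\overline h|=(2\pi)^d|\tr(L_{\SF u}L_g^*)|\le(2\pi)^d\|L_{\SF u}\|_{\Scal_{p'}}\,C_{d,p}(2R)\,\|g\|_{L^p}$,
that is, $D_{d,p'}(R)\lesssim C_{d,p}(2R)$. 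This is exactly what Proposition~\ref{Prop-comparison-constants} and Corollary~\ref{Cor-gammap} supply, in the form $\gamma_{d,p}=\eta_{d,p'}$.

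Three smaller points also need repair.

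\emph{Symplectic Fourier transform of $\varphi_k$.} In this normalization $\varphi_k$ is not a $\pm1$ eigenfunction of $\SF$. It is an eigenfunction of $-\Delta+\frac14|z|^2$, not of $-\Delta+|z|^2$; already $\SF(\ee^{-|z|^2/4})=2^d\ee^{-|w|^2}$. Lemma~\ref{Lem-Laguerrefourier} gives $\SF\varphi_k=2^d(-1)^k\varphi_k(2\,\cdot)$. This is harmless, because $\|\SF\varphi_k\|_{L^p}=2^{d-2d/p}\|\varphi_k\|_{L^p}$.

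\emph{The regime $p<p_c$.} This case is left open: your exponent contains a placeholder, and a check at the single point $p=2$ cannot determine it. What you need is $\|\varphi_k\|_{L^p}\simeq k^{d/p-1/2}$. It follows from $\|\varphi_k\|_{L^p}^p\simeq k^{p(d-1)/2}\|\mathcal L_k^{d-1}(t)\,t^{(d-1)(1/p-1/2)}\|_{L^p(\RR_+)}^p$ together with Markett's lemma for $\alpha+\beta=d-1$, $\beta=(d-1)(1-2/p)$. The condition $\beta\lessgtr 2/p-1/2$ is exactly $p\lessgtr p_c$; this is Lemma~\ref{Lem-Lpnormphik}. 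Your bulk heuristic agrees: the amplitude $k^{(d-1)/2-1/4}|z|^{-(d-1)-1/2}$, integrated up to $|z|\simeq\sqrt k$, gives $\|\varphi_k\|_{L^p}^p\simeq k^{d-p/2}$ precisely when $p<p_c$. The ratio is then $k^{1/p-1/2}\simeq R^{2/p-1}$.

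\emph{The cutoff constant.} With $c$ ``slightly larger than $2$'', the radius $c\sqrt k$ lies inside the turning point $2\sqrt{2k}$, so the tail is not small. You need $c>2\sqrt2$, as you say later. Making all Schwartz seminorms of the tail exponentially small also requires derivative bounds beyond the turning point, as in Lemma~\ref{Lem-Derivativephik}; this is why the paper takes $R=4\sqrt k$.
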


For completeness, Figure~\ref{fig:growth-exponents} compares the lower and upper exponents when $d=2$. 

\begin{figure}[htbp]
\centering
\begin{tikzpicture}[
    x=12cm,
    y=0.65cm,
    line cap=round,
    line join=round,
    >=stealth
]
\pgfmathsetmacro{\dval}{2}
\pgfmathsetmacro{\xe}{(\dval-1)/(2*\dval-1)}
\pgfmathsetmacro{\xc}{(2*\dval-1)/(4*\dval)}
\pgfmathsetmacro{\xh}{0.5}
\pgfmathsetmacro{\xg}{\dval/(2*\dval-1)}
\pgfmathsetmacro{\Umax}{2*\dval+1}
\pgfmathsetmacro{\etaL}{2*(\dval-1)}
\pgfmathsetmacro{\gammaR}{2*(\dval-1)}
\pgfmathsetmacro{\gammaAtPc}{1-2*\xc}

\definecolor{gammaColor}{RGB}{0,90,180}
\definecolor{etaColor}{RGB}{230,100,0}
\definecolor{UColor}{RGB}{0,130,0}

\draw[black,thick] (0,0) -- (1.04,0) node[right] {$1/p$};
\draw[black,thick] (0,0) -- (0,\Umax+1) node[above] {exponents};

\foreach \xx in {\xe,\xc,\xh,\xg}{
    \draw[black!55,densely dotted] (\xx,0) -- (\xx,\Umax);
}

\foreach \xx/\lab/\xshift in {
    0/{$0$}/0pt,
    \xe/{$\dfrac13$}/0pt,
    \xc/{$\dfrac38=\dfrac{1}{p_c}$}/14pt,
    \xh/{$\dfrac12$}/0pt,
    \xg/{$\dfrac23$}/0pt,
    1/{$1$}/0pt
}{
    \draw[black,thick] (\xx,0.06) -- (\xx,-0.06);
    \node[below,xshift=\xshift] at (\xx,-0.08) {\lab};
}

\foreach \yy in {0,1,2,3,4,5}{
    \draw[black,thick] (0.006,\yy) -- (-0.006,\yy);
    \node[left] at (-0.012,\yy) {$\yy$};
}

\draw[UColor,very thick] (0,\Umax) -- (\xh,0) -- (1,\Umax);
\fill[UColor] (0,\Umax) circle (1.8pt);
\fill[UColor] (\xh,0) circle (1.8pt);
\fill[UColor] (1,\Umax) circle (1.8pt);

\draw[gammaColor,very thick] (0,1) -- (\xh,0);
\draw[gammaColor,very thick] (\xg,0) -- (1,\gammaR);
\fill[gammaColor] (0,1) circle (1.8pt);
\fill[gammaColor] (\xh,0) circle (1.8pt);
\fill[gammaColor] (\xg,0) circle (1.8pt);
\fill[gammaColor] (1,\gammaR) circle (1.8pt);
\draw[gammaColor,fill=white,very thick]
    (\xc,\gammaAtPc) circle (2.1pt);

\draw[etaColor,very thick] (0,\etaL) -- (\xe,0);
\draw[etaColor,very thick] (\xh,0) -- (1,1);
\fill[etaColor] (0,\etaL) circle (1.8pt);
\fill[etaColor] (\xe,0) circle (1.8pt);
\fill[etaColor] (\xh,0) circle (1.8pt);
\fill[etaColor] (1,1) circle (1.8pt);
\draw[etaColor,fill=white,very thick] (\xc,0) circle (2.2pt);

\node[
    draw=black,
    rounded corners=2pt,
    fill=white,
    inner sep=6pt,
    anchor=south east
]
at (1.02,\Umax+0.15) {
\begin{tikzpicture}[x=1cm,y=1cm,baseline={(current bounding box.center)}]
    \draw[UColor,very thick] (0,1.00) -- (0.50,1.00);
    \node[right] at (0.62,1.00) {$U_{2,p}$};

    \draw[gammaColor,very thick] (0,0.50) -- (0.50,0.50);
    \node[right] at (0.62,0.50) {$\underline{\gamma_{2,p}}$};

    \draw[etaColor,very thick] (0,0.00) -- (0.50,0.00);
    \node[right] at (0.62,0.00) {$\underline{\eta_{2,p}}$};
\end{tikzpicture}
};
\end{tikzpicture}
\caption{Lower and upper polynomial exponents as functions of $1/p$.}
\label{fig:growth-exponents}
\end{figure}

The lower bounds in Theorem~\ref{Thm-nontrivial} show that the support-radius
loss cannot in general be removed.

\begin{corollary}\label{Cor-nontrivial}
Assume $d\geq2$.
\begin{enumerate}
\item For
\begin{equation}\label{Range2}
    p\in[1,2)\cup\left(2+\frac{1}{d-1},\infty\right],
\end{equation}
there is no constant $C$ independent of $R$ such that
\[
    \|\SF u\|_{L^p(\RR^{2d})}
    \leq C\|L_{\SF u}\|_{\Scal_p},
    \,\,\,
    \operatorname{supp}u\subset\overline{B(z_0,R)}.
\]
\item For
\begin{equation}\label{Range1}
    p\in\left[1,2-\frac1d\right)\cup(2,\infty],
\end{equation}
there is no constant $C$ independent of $R$ such that
\[
    \|L_{\SF u}\|_{\Scal_p}
    \leq C\|\SF u\|_{L^p(\RR^{2d})},
    \,\,\,
    \operatorname{supp}u\subset\overline{B(z_0,R)}.
\]
\end{enumerate}
\end{corollary}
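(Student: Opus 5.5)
The corollary follows from Theorem~\ref{Thm-nontrivial} once we check where the lower exponents are strictly positive. The link is this: if an estimate held with a constant $C$ independent of $R$, the corresponding supremum would be bounded uniformly in $R$, forcing the growth exponent to be $0$.

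\textbf{Reduction to the exponents.} Suppose part (1) fails, so that $\|\SF u\|_{L^p}\le C\|L_{\SF u}\|_{\Scal_p}$ for all $u$ supported in some ball $\overline{B(z_0,R)}$ with $C$ independent of $R$. Taking $z_0=0$ and radius $2R$ gives $D_{d,p}(R)\le C$ for all $R\ge1$, hence $\eta_{d,p}=0$. Likewise, a uniform version of part (2) gives $C_{d,p}(R)\le C$ and hence $\gamma_{d,p}=0$. By Theorem~\ref{Thm-nontrivial}, it therefore suffices to show the following.
\begin{itemize}
\item $\underline{\eta_{d,p}}>0$ on the range \eqref{Range2}.
\item $\underline{\gamma_{d,p}}>0$ on the range \eqref{Range1}.
\end{itemize}
The excluded point $p=p_c$ needs separate handling; see the last step.

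\textbf{Computing where $a_{d,p}>0$.}
\begin{itemize}
\item For $p>p_c$: $a_{d,p}>0$ iff $(d-1)p>2d-1$, i.e. $p>2+\frac1{d-1}$. Note that $2+\frac1{d-1}\ge p_c$, since $p_c=\frac{4d}{2d-1}<3$ and $2+\frac{1}{d-1}\ge 2$; a direct comparison confirms it.
\item For $p<p_c$: $a_{d,p}=\frac2p-1>0$ iff $p<2$.
\end{itemize}
So $\underline{\eta_{d,p}}>0$ exactly on $[1,2)\cup(2+\frac1{d-1},\infty]$, which gives part (1).

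For $\gamma$ we also use the conjugate exponent $p'$.
\begin{itemize}
\item $-a_{d,p}>0$ when $2<p<p_c$, since there $a_{d,p}=\frac2p-1<0$. For $p_c<p\le 2+\frac1{d-1}$ we have $-a_{d,p}\ge0$, with strict positivity when $p<2+\frac1{d-1}$.
\item $a_{d,p'}>0$ iff $p'<2$ (i.e. $p>2$) or $p'>2+\frac1{d-1}$. The latter means $p<\big(2+\frac1{d-1}\big)'=\frac{2d-1}{d}=2-\frac1d$.
\end{itemize}
Combining, $\underline{\gamma_{d,p}}>0$ on $[1,2-\frac1d)\cup(2,\infty]$, apart from possibly $p=p_c$ or $p'=p_c$, where Theorem~\ref{Thm-nontrivial} is not stated. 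On $(2,\infty]$ the condition $p'<2$ already holds, so $a_{d,p'}$ uses the $p'<p_c$ branch, and $p'=p_c$ cannot occur since $p_c>2$.

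\textbf{The point $p=p_c$.} The main obstacle is this exceptional exponent, which lies in $(2,\infty)$ and so sits inside range \eqref{Range1}, though not inside range \eqref{Range2}, since $p_c<2+\frac1{d-1}$. I would handle it by monotonicity or interpolation in $p$.

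Concretely, I would use Riesz--Thorin/Schatten interpolation between $p=2$, where the Hilbert--Schmidt identity holds with constant $(2\pi)^{-d/2}$, and some $q>p_c$ with $\gamma_{d,q}$ unknown. Interpolation only gives upper bounds, however, so that route cannot produce a lower bound.

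I would therefore argue directly instead. If $C_{d,p_c}(R)$ were bounded, interpolating this bound with the $p=2$ identity would bound $C_{d,p}(R)$ uniformly in $R$ for all $p$ between $2$ and $p_c$. This contradicts $\underline{\gamma_{d,p}}>0$ on $(2,p_c)$. This is valid because $u\mapsto L_{\SF u}$ on the class of $u$ supported in $\overline{B(0,2R)}$ is a fixed linear map, and the interpolation can be run on that closed subspace using the Paley--Wiener structure. The delicate point is justifying complex interpolation for the subspace of band-limited functions. I would do this with a smooth Fourier cutoff that is bounded on all $L^p$, at the cost of passing to radius $4R$, which is harmless for polynomial growth.
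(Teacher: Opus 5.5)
Your reduction and sign analysis are the paper's own argument. A uniform constant would make $D_{d,p}(R)$ (for part 1) or $C_{d,p}(R)$ (for part 2) bounded, and your case check of $a_{d,p}$ and $a_{d,p'}$ reproduces exactly \eqref{Range2} and \eqref{Range1}. One small repair: the reason you give for $2+\frac1{d-1}\ge p_c$ does not imply it. The inequality does hold strictly, since $p_c=2+\frac{2}{2d-1}<2+\frac{2}{2d-2}$.

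You are right that $p_c$ lies in \eqref{Range1} while Theorem~\ref{Thm-nontrivial} is stated only for $p\neq p_c$; the paper's ``direct check'' passes over this silently. It is not a real obstacle, though, and your own remark that $p'<2$ on $(2,\infty]$ already contains the fix. By Corollary~\ref{Cor-gammap}, $\gamma_{d,p_c}=\eta_{d,p_c'}$. Since $p_c'=\frac{4d}{2d+1}<2<p_c$, Proposition~\ref{Prop-explicitexample} applies at $p_c'$ and gives $\eta_{d,p_c'}\ge a_{d,p_c'}=\frac{1}{2d}>0$.

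Your interpolation-by-contradiction argument also works, with one adjustment. Run it on the regularized map $g\mapsto L_{g*\psi_R}$, which is defined on all of $L^p$, so you never need to interpolate band-limited subspaces. Then pass back and forth with $C_{d,p}(R)\lesssim\widetilde C_{d,p}(2R)\lesssim C_{d,p}(2R)$ from \eqref{Rela-CCtilde}. It is valid, but it is more machinery than the one-line duality argument.
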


Thus the dependence on $R$ in Theorem~\ref{Mainthm} reflects a genuine obstruction, rather than merely a limitation of the proof. Consequently, the estimate from \(\Scal_p\) to \(L^p\) cannot hold globally for \(p\) in \eqref{Range2}, while the estimate from \(L^p\) to \(\Scal_p\) cannot hold globally for \(p\) in \eqref{Range1}. In particular, in these ranges there can be no corresponding comparison estimate without a Fourier-support restriction.

The construction refines \cite[Example~4.4]{M26}, which provides a nontrivial lower bound only for \eqref{Maininequality2} at $p=\infty$. As $d\to\infty$, the ranges \eqref{Range2} and \eqref{Range1} approach the full interval $1\leq p\leq\infty$, apart from the Hilbert--Schmidt point $p=2$. Furthermore, for every fixed $p>2$,
\[
    \lim_{d\to\infty}
    \frac{U_{d,p}}{\underline{\eta_{d,p}}}=1.
\]
Hence the exponent in the comparison from $\Scal_p$ to $L^p$, namely \eqref{Maininequality1}, is asymptotically sharp in high dimensions.

The critical exponent $p=p_c$ is omitted from Theorem~\ref{Thm-nontrivial} because the corresponding $L^p$-asymptotics for Laguerre functions contain an additional logarithmic factor. We do not pursue
that endpoint refinement here. The upper bounds in Theorem~\ref{Mainthm} hold for every $d\geq1$, whereas the lower-bound theorem is stated for $d\geq2$.

%The paper is organized as follows. In Section~\ref{Section-pre} we recall the Fourier--Weyl formalism, Schatten classes, and Werner's convolution product. Section~\ref{Section-Laguerre} reviews the Hermite--Laguerre correspondence and collects the Laguerre estimates used in the examples. In Section~\ref{Section-Mainestimate} we prove the refined comparison estimates with explicit dependence on the support radius. Finally, in Section~\ref{Section-example} we construct the truncated Laguerre examples and prove the lower bounds above.

\subsection*{Notation}

Throughout the paper, $d\geq1$ is fixed. If $A$ and $B$ are nonnegative quantities, we write $A\lesssim B$ if there exists a constant $C>0$ such that $A\leq CB$, and $A\simeq B$ if both $A\lesssim B$ and $B\lesssim A$ hold. Dependence of the implicit constant on parameters is indicated by subscripts; for example, $A\lesssim_{d,p}B$.

For $1\leq p\leq\infty$, $p'$ denotes the conjugate exponent. We write $\Scal(\RR^n)$ for the Schwartz space, $\Scal'(\RR^n)$ for the space of tempered distributions, and $\mathcal E'(\RR^n)$ for the space of compactly supported distributions. The parameter $R\geq1$ always denotes the support scale.

We write $x\cdot y$ for the Euclidean inner product on $\RR^n$. The Hilbert space inner product on $L^2(\RR^n)$ is
\[
    (f,g)_{L^2}=\int_{\RR^n}f(x)\overline{g(x)}\,\dd x,
\]
and is linear in the first variable. The distributional pairing between $\Scal'(\RR^n)$ and $\Scal(\RR^n)$ is denoted by $\langle u,\varphi\rangle$; for suitable functions,
\[
    \langle f,g\rangle =\int_{\RR^n}f(x)g(x)\,\dd x.
\]

\section{Fourier--Weyl preliminaries and Schatten classes}\label{Section-pre}
Although the main objects have already been introduced in Section~\ref{Section-intro}, we record here the precise normalizations and basic identities used throughout the paper.

\subsection{Fourier transform and symplectic Fourier transform}
Throughout the paper we use the following convention for the Euclidean Fourier transform on \(\RR^d\):
\[
\widehat f(\xi)=\int_{\RR^d} f(x)\ee^{-ix\cdot \xi}\,\dd x, \,\,\, f(x)=(2\pi)^{-d}\int_{\RR^d}\widehat f(\xi)\ee^{ix\cdot \xi}\,\dd\xi .
\]
With this convention, Plancherel's theorem reads
\[
\|\widehat f\|_{L^2(\RR^d)}=(2\pi)^{d/2}\|f\|_{L^2(\RR^d)}.
\]

On the phase space \(\RR^{2d}\), for \(z=(x,\xi)\) and \(w=(y,\eta)\), we use the \emph{symplectic form}
\[
\sigma(w,z)=\eta\cdot x-y\cdot \xi. 
\]
Equivalently, if 
\[
J= \begin{pmatrix} 0 & \Id\\ -\Id & 0 \end{pmatrix}, 
\] 
then
\begin{equation}\label{Def-MatrixJ}
\sigma(w,z)=Jw\cdot z,\,\,\, z,w\in\RR^{2d}.
\end{equation}

The \emph{normalized symplectic Fourier transform} is
\begin{equation}\label{Def-Sym}
\SF f(w)\defeq (2\pi)^{-d}\int_{\RR^{2d}} f(z)\ee^{-i\sigma(w,z)}\,\dd z,\,\,\,w\in \RR^{2d}.
\end{equation}
Because of (\ref{Def-MatrixJ}),
\begin{equation}\label{Rela-twotransforms}
\SF f(w)=(2\pi)^{-d}\widehat f(Jw),
\end{equation}
where the Fourier transform on the right-hand side is the Euclidean Fourier transform on \(\RR^{2d}\). Since \(J\) is orthogonal, \(\SF\) is unitary on \(L^2(\RR^{2d})\), and
\[
\SF^2=\mathrm{Id}.
\]
We use the standard \emph{convolution}
\[
    (f*g)(z)=\int_{\RR^{2d}}f(z-z')g(z')\,\dd z'.
\]
With our normalization of the symplectic Fourier transform in \eqref{Def-Sym}, one has
\begin{equation}\label{Conv-sym}
    \SF(FG)=(2\pi)^{-d}\,\SF F*\SF G.
\end{equation}

\subsection{The integrated Schr\"odinger representation}
We use the \emph{Schr\"odinger representation} of phase space, that is, the map \(\rho:\RR^{2d}\to \{\mathrm{unitary}\,\,\mathrm{ operators}\,\,\mathrm{on}\,\, L^2(\RR^d)\}\) given by 
\[
[\rho(x,\xi)g](t) =\ee^{-\frac{i}{2}x\cdot \xi}\ee^{it\cdot \xi}g(t-x), \,\,\, g\in L^2(\RR^d). 
\] It satisfies the projective relation 
\begin{equation}\label{Idn-rhourhov}
\rho(z)\rho(z') = \ee^{\frac i2\sigma(z,z')}\rho(z+z'). 
\end{equation}
For \(F\in L^1(\RR^{2d})\), the \emph{integrated representation} is defined by 
\begin{equation}\label{Def-integratedSch}
\rho(F)g = (2\pi)^{-d} \int_{\RR^{2d}}F(z)\rho(z)g\,\dd z, \,\,\, g\in L^2(\RR^d), 
\end{equation} 
where the integral is understood as a Bochner integral. So for any \(F\in L^1(\RR^{2d})\), \(\rho(F)\in \BB(L^2(\RR^d))\). The following kernel formula will be used repeatedly. Writing \(z=(y,\xi)\), one has
\[
[\rho(F)g](t) = (2\pi)^{-d} \int_{\RR^d}\int_{\RR^d} F(y,\xi)\ee^{-\frac{i}{2}y\cdot \xi}\ee^{it\cdot \xi}g(t-y) \,\dd y\,\dd \xi . 
\]
After the change of variables \(x=t-y\), this becomes 
\[
[\rho(F)g](t)=\int_{\RR^d} k_F(t,x)g(x)\,\dd x, 
\] 
where 
\[
k_F(t,x) = (2\pi)^{-d}\int_{\RR^d} F(t-x,\xi)\ee^{\frac{i}{2}(t+x)\cdot \xi}\,\dd \xi. 
\] 
Equivalently, if \(\mathcal F_2\) denotes the Euclidean Fourier transform in the second variable only, namely 
\[
\mathcal F_2 F(y,\eta) = \int_{\RR^d}F(y,\xi)\ee^{-i\eta\cdot\xi}\,\dd\xi, 
\] 
then 
\begin{equation}\label{For-kernelrhoF1}
k_F(t,x) = (2\pi)^{-d} \mathcal F_2 F \left( t-x,-\frac{t+x}{2} \right). 
\end{equation}

For a suitable phase-space symbol \(F\), for instance \(F\in\Scal(\RR^{2d})\), we define its \emph{Weyl operator} by
\[
L_F\defeq \rho(\SF F).
\]
That is, 
\[
L_F = (2\pi)^{-d} \int_{\RR^{2d}} \SF F(z)\rho(z)\,\dd z . 
\] 
Since \(\SF^2=\Id\), we also have 
\[ 
\rho(F)=L_{\SF F} 
\] 
for suitable functions \(F\), and more generally for suitable distributions on \(\RR^{2d}\).

%In particular, if \(F\in\Scal(\RR^{2d})\), then \(\rho(F)\) is of trace class. Its trace is given by
%\[
%\tr \rho(F)=\int_{\RR^d} k_F(x,x)\,\dd x.
%\]
%Using the kernel formula \eqref{For-kernelrhoF1}, we obtain  
%\[ 
%k_F(x,x) = (2\pi)^{-d} \int_{\RR^d} F(0,\xi)\ee^{ix\cdot\xi}\,\dd\xi.
%\]
%For \(F\in\Scal(\RR^{2d})\), Fourier inversion gives
%\[
%\tr \rho(F)=F(0,0).
%\]
Finally, we extend the construction to tempered distributions. If \(u\in\Scal^{\prime}(\RR^{2d})\), then \(\rho(u)\) is understood as a continuous linear operator 
\[
\rho(u):\Scal(\RR^d)\to \Scal^{\prime}(\RR^d). 
\] 
Its Schwartz kernel \(k_u\) is defined by 
\begin{equation}\label{For-kernelrhou} 
k_u(t,x) = (2\pi)^{-d} (\mathcal F_2 u) \left( t-x,-\frac{t+x}{2} \right), 
\end{equation}
where the linear change of variables is interpreted in the sense of distributions. Equivalently, in the sense of distributions, for all \(\phi,\psi\in\Scal(\RR^d)\),
\begin{equation}\label{For-kernelfordistribution}
\langle \rho(u)\psi,\phi\rangle = \langle k_u,\phi\otimes\psi\rangle . 
\end{equation}
This distributional formulation will be used later when the phase-space symbol is no longer a Schwartz function.
\begin{remark}\label{Rem-Linjective}
The map \( u\mapsto k_u\) is a topological isomorphism of \(\Scal'(\RR^{2d})\). Indeed, by \eqref{For-kernelrhou}, it is obtained from the partial Fourier transform in the second variable and an invertible linear change of variables. Hence, by the Schwartz kernel theorem, the correspondence \(u\mapsto \rho(u)\) is one-to-one at the level of tempered distributions, after identifying operators \(\Scal(\RR^d)\to\Scal'(\RR^d)\) with their Schwartz kernels. 

Thus, the Weyl correspondence \(u\mapsto L_u=\rho(\SF u)\)
is a topological isomorphism
\[
    \Scal'(\RR^{2d}) \simeq\mathcal L\bigl(\Scal(\RR^d),\Scal'(\RR^d)\bigr).
\]
\end{remark}
We shall also use the twisted convolution associated with the projective Schr\"odinger representation. Let \(F,G\) be suitable functions on \(\RR^{2d}\), for instance \(F,G\in\Scal(\RR^{2d})\). We define the \emph{twisted convolution} between \(F\) and \(G\) as
\begin{equation}\label{Def-twistedconvolution}
(F\times G)(z)\defeq(2\pi)^{-d}\int_{\RR^{2d}}F(z-z')G(z')\ee^{\frac i2\sigma(z,z')}\,\dd z'.
\end{equation}
This definition is chosen so that the integrated representation transforms the twisted convolution into operator composition. Indeed, using \eqref{Idn-rhourhov}, we compute formally
\[
\begin{aligned}
\rho(F)\rho(G)
&=(2\pi)^{-2d}\int_{\RR^{2d}}\int_{\RR^{2d}}F(u)G(v)\rho(u)\rho(v)\,\dd u\,\dd v  \\
&=(2\pi)^{-2d}\int_{\RR^{2d}}\int_{\RR^{2d}}F(u)G(v)\ee^{\frac i2\sigma(u,v)}\rho(u+v)\,\dd u\,\dd v .
\end{aligned}
\]
Putting \(z=u+v\), or equivalently \(u=z-v\), we obtain
\begin{equation}\label{Idn-rhoFrhoG}
\rho(F)\rho(G)=(2\pi)^{-d}\int_{\RR^{2d}}(F\times G)(z)\rho(z)\,\dd z=\rho(F\times G).
\end{equation}

\subsection{Schatten classes}

We also recall some basic facts about Schatten classes. Let \(\mathcal B(L^2(\RR^d))\) denote the space of the bounded operators on \(L^2(\RR^d)\), and let \(\mathcal K(L^2(\RR^d))\) denote the space of the compact operators.

By the singular value decomposition in \cite[Chapter 3]{S15}, every \(A\in\mathcal K(L^2(\RR^d))\) can be written as
\[
 A=\sum_{n\ge1}s_n(A)(\,\cdot\,,g_n)_{L^2}f_n,
\]
where \(s_1(A)\ge s_2(A)\ge \cdots\ge0\) are the singular values of \(A\), namely the eigenvalues of \((A^*A)^{1/2}\), counted with multiplicity, and \(\{f_n\}\), \(\{g_n\}\) are orthonormal systems in \(L^2(\RR^d)\). 

For \(1\le p<\infty\), the \emph{Schatten class} \(\Scal_p\) consists of all compact operators \(A\) such that
\[
\|A\|_{\Scal_p}\defeq\left(\sum_{j\ge1}s_j(A)^p\right)^{1/p}<\infty.
\]
For \(p=\infty\), we set
\[
\Scal_\infty=\BB(L^2(\RR^d)),
\,\,\,
\|A\|_{\Scal_\infty}=\|A\|_{\BB(L^2(\RR^d))}.
\]
The case \(p=2\) is the \emph{Hilbert--Schmidt class}. If \(A\) is an integral operator on \(L^2(\RR^d)\) with kernel \(K_A(t,x)\), then
\[
    \|A\|_{\Scal_2}^2=\int_{\RR^d}\int_{\RR^d}|K_A(t,x)|^2\,\dd t\,\dd x.
\]
The case \(p=1\) is the \emph{trace class}. If \(A\in\Scal_1\), then
\[
    \tr(A)=\sum_{j\ge1}(Ae_j,e_j),
\]
where \(\{e_j\}_{j\ge1}\) is any orthonormal basis of \(L^2(\RR^d)\). For integral operators with sufficiently regular kernels, this trace is given by
\[
    \tr(A)=\int_{\RR^d}K_A(x,x)\,\dd x.
\]

Moreover, we have the continuous inclusions
\begin{equation}\label{Rela-Inclusion}
\Scal_1\subseteq \Scal_p \subseteq \Scal_q \subseteq \Scal_\infty,\,\,\,1\leq p\leq q\leq \infty.
\end{equation}
These inclusions follow from the corresponding inclusions for sequence spaces. In addition, finite-rank operators are dense in \(\Scal_p\) for \(1\le p<\infty\); in particular, \(\Scal_1\) is dense in \(\Scal_p\) for \(1\le p<\infty\). Moreover, we recall that \(\Scal_p\) is a two-sided ideal in \(\Scal_\infty=B(L^2(\RR^d))\) and is closed under taking adjoints. In particular, if \(A\in\Scal_p\) and  \(B\in\Scal_\infty\), then \(A^*,AB, BA\in\Scal_p\). 

We shall use repeatedly the trace duality of Schatten classes. If \(1\le p<\infty\), then every \(B\in\Scal_{p'}\) defines a bounded linear functional on \(\Scal_p\) by
\[
    A\mapsto \tr(AB^*),
\]
and
\[
    |\tr(AB^*)| \le\|A\|_{\Scal_p}\|B\|_{\Scal_{p'}}.
\]
Moreover, this identifies \((\Scal_p)^*\) with \(\Scal_{p'}\). For \(p=2\), we use the Hilbert--Schmidt inner product
\[
    (A,B)_{\Scal_2}:=\tr(AB^*),\qquad A,B\in\Scal_2.
\]

We shall also use the interpolation property of Schatten classes, which can be found in \cite{BL76, S05, S15}. If
\(1\le p_0,p_1\le\infty\), \(0<\theta<1\), and
\[
\frac1p=\frac{1-\theta}{p_0}+\frac{\theta}{p_1},
\]
then
\[
[\mathcal S_{p_0},\mathcal S_{p_1}]_\theta=\mathcal S_p
\]
with equality of norms up to the usual interpolation constants, here \([\cdot,\cdot]_{\theta}\) denotes the complex interpolation functor. This allows us
to obtain estimates in \(\mathcal S_p\) by interpolating between the trace class,
the Hilbert--Schmidt class, and the operator norm.

We also recall \emph{Werner's convolution product} for Schatten class operators. For
\(T_1,T_2\in \Scal_1\), define
\[
T_1\star T_2(w)\defeq\tr\bigl[\rho(-w)T_1\rho(w)\,P T_2P\bigr],\,\,\, w\in\RR^{2d},
\]
where \(P\) is the parity operator on \(L^2(\RR^d)\), given by
\[
Pg(t)\defeq g(-t),\,\,\, g\in L^2(\RR^d).
\]
This convolution product is commutative and associative; see
\cite[Proposition~4.4]{LS18}.

We record the following identities, which follow from \cite[(2.9) and Lemma~3.4]{M26}. According to \eqref{Conv-sym}, we have additional \((2\pi)^{-d}\) as follows.
\begin{lemma}\label{Lem-WernerConvolution}
Let \(F,G\in\Scal(\RR^{2d})\). Then
\[
\rho(F)\star\rho(G)=\SF(FG)=(2\pi)^{-d}\SF F * \SF G .
\]
More generally, if \(u\in\Scal^{\prime}(\RR^{2d})\), \(T=\rho(u)\in\Scal_\infty\), and
\(F\in\Scal(\RR^{2d})\), then
\begin{equation}\label{Oper-star}
T\star\rho(F)=\rho(u)\star\rho(F)=\SF(uF)=(2\pi)^{-d}\SF u * \SF F .
\end{equation}
\end{lemma}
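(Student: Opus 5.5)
The plan is to compute the Werner convolution directly, reducing everything to two elementary facts about the integrated representation:
\begin{enumerate}
\item conjugation by $\rho(\pm w)$ and by $P$ acts on symbols by modulation and reflection;
\item the trace of $\rho(H)$, for $H\in\Scal(\RR^{2d})$, is $H(0)$.
\end{enumerate}
For the second fact I use the kernel formula \eqref{For-kernelrhoF1}:
\[
\tr\rho(H)=\int_{\RR^d} k_H(x,x)\,\dd x=(2\pi)^{-d}\int_{\RR^d}\mathcal F_2H(0,-x)\,\dd x=H(0,0),
\]
where the last equality is Fourier inversion in $\xi$.

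For the conjugation I use the projective relation \eqref{Idn-rhourhov} twice:
\[
\rho(-w)\rho(z)\rho(w)=\ee^{\frac i2\sigma(-w,z)}\ee^{\frac i2\sigma(z-w,w)}\rho(z)=\ee^{-i\sigma(w,z)}\rho(z).
\]
Hence $\rho(-w)\rho(F)\rho(w)=\rho(F_w)$ with $F_w(z)=\ee^{-i\sigma(w,z)}F(z)$. Similarly, $P\rho(z)P=\rho(-z)$ follows directly from the formula for $\rho$, so $P\rho(G)P=\rho(\check G)$ with $\check G(z)=G(-z)$.

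\textbf{Case $F,G\in\Scal(\RR^{2d})$.} By \eqref{Idn-rhoFrhoG},
\[
\rho(F)\star\rho(G)(w)=\tr\rho(F_w\times\check G)=(F_w\times\check G)(0).
\]
The twisted phase $\ee^{\frac i2\sigma(0,z')}$ equals $1$, so
\[
(F_w\times\check G)(0)=(2\pi)^{-d}\int_{\RR^{2d}} F_w(-z')G(-z')\,\dd z'.
\]
Substituting $z=-z'$ gives
\[
(2\pi)^{-d}\int_{\RR^{2d}} F(z)G(z)\ee^{-i\sigma(w,z)}\,\dd z=\SF(FG)(w).
\]
The second equality in the lemma is then \eqref{Conv-sym}. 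The only thing to check is that $F_w\times\check G$ is Schwartz, so that the trace formula applies; this is routine.

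\textbf{Case $u\in\Scal'(\RR^{2d})$ with $T=\rho(u)$ bounded.} Here I first use cyclicity of the trace, together with $P^2=\Id$ and $\rho(w)^{-1}=\rho(-w)$, to move all conjugations onto the trace-class factor:
\[
T\star\rho(F)(w)=\tr\bigl[T\,\rho(w)P\rho(F)P\rho(-w)\bigr]=\tr\bigl[T\rho(H_w)\bigr],
\]
where $H_w(z)=\ee^{i\sigma(w,z)}\check F(z)$ is Schwartz. It then remains to prove the pairing identity
\[
\tr\bigl[\rho(u)\rho(H)\bigr]=(2\pi)^{-d}\langle u,\check H\rangle
\]
for bounded $\rho(u)$ and Schwartz $H$. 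Inserting $H=H_w$ gives
\[
(2\pi)^{-d}\langle u,\ee^{-i\sigma(w,\cdot)}F\rangle=\SF(uF)(w).
\]
For $u$ Schwartz, the pairing identity is the computation of the first case.

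\textbf{Main obstacle.} The delicate point is justifying the pairing identity when $u$ is only a tempered distribution. I would do this through Schwartz kernels. Since $\rho(H)$ has a Schwartz kernel $k_H$, it factors as a product of two trace-class operators with Schwartz kernels, for instance $\rho(H)=\rho(\ee^{|z|^2/4}\text{-type})\cdots$, or more simply via a Hermite expansion of $k_H$ with rapidly decaying coefficients. For such operators,
\[
\tr\bigl(T\rho(H)\bigr)=\langle k_u,k_H^{t}\rangle,
\]
using \eqref{For-kernelfordistribution} term by term on a rank-one decomposition $k_H=\sum_j c_j\,\phi_j\otimes\psi_j$ with $\sum_j|c_j|$-type convergence in $\Scal$. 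Both sides are continuous in $u$ for the weak-$*$ topology of $\Scal'$. The right-hand side, written as a pairing of $u$ with a Schwartz function through \eqref{For-kernelrhou}, agrees with $(2\pi)^{-d}\langle u,\check H\rangle$ for Schwartz $u$, and hence for all $u\in\Scal'(\RR^{2d})$, since $u\mapsto k_u$ is a topological isomorphism by Remark~\ref{Rem-Linjective}. Finally, the convolution form $(2\pi)^{-d}\SF u*\SF F$ follows from \eqref{Conv-sym} extended to $u\in\Scal'$ and $F\in\Scal$, which is standard.
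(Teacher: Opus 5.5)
Your argument is correct. It differs from the paper mainly because the paper gives no computation: it imports the identities from \cite[(2.9) and Lemma~3.4]{M26} and only adjusts the constant $(2\pi)^{-d}$ via \eqref{Conv-sym}.

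You instead verify everything in the paper's own normalization:
\begin{enumerate}
\item the conjugation rules $\rho(-w)\rho(z)\rho(w)=\ee^{-i\sigma(w,z)}\rho(z)$ and $P\rho(z)P=\rho(-z)$;
\item the trace formula $\tr\rho(H)=H(0)$ from \eqref{For-kernelrhoF1};
\item \eqref{Idn-rhoFrhoG} evaluated at $z=0$, where the twisted phase disappears.
\end{enumerate}
For tempered $u$, you move the conjugations onto the trace-class factor by cyclicity. This reduces the claim to the pairing identity $\tr[\rho(u)\rho(H)]=(2\pi)^{-d}\langle u,\check H\rangle$. You obtain it by identifying the trace with $\langle k_u,k_H^{t}\rangle$, using a rank-one Hermite expansion of $k_H$, and then extending from $\Scal$ to $\Scal'$ by weak-$*$ density.

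The citation buys brevity. Your route gives a self-contained check of exactly the constants the paper has to patch by hand. It also makes clear that the boundedness of $\rho(u)$ is used only to give the trace a meaning, since $\langle k_u,k_H^{t}\rangle=(2\pi)^{-d}\langle u,\check H\rangle$ holds for every $u\in\Scal'(\RR^{2d})$.

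Two small points of presentation. First, drop the garbled suggestion of factoring $\rho(H)$ through an ``$\ee^{|z|^2/4}$-type'' symbol and keep only the Hermite expansion. Its rapidly decaying coefficients give convergence both in $\Scal_1$ (after composing with the bounded $T$) and in $\Scal(\RR^{2d})$. Second, run the density argument on the functional $u\mapsto\langle k_u,k_H^{t}\rangle$, which is defined and weak-$*$ continuous on all of $\Scal'$, rather than on the trace itself. The trace is only defined when $\rho(u)$ is bounded.
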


The following \emph{Young-type inequality for Werner convolution} will be used below; see, for instance, \cite[Proposition~3.2]{W84}, \cite[Proposition~4.2]{LS18}, and \cite[Proposition~2.1]{M26}.

\begin{proposition}\label{Prop-Younginequality}
Let \(1\le p,q,r\le\infty\) satisfy
\[
\frac1p+\frac1q=1+\frac1r.
\]
Then, for \(S\in\Scal_p\) and \(T\in\Scal_q\), the convolution
\(S\star T\) belongs to \(L^r(\RR^{2d})\), and
\[
\|S\star T\|_{L^r(\RR^{2d})}\lesssim\|S\|_{\Scal_p}\|T\|_{\Scal_q}.
\]
\end{proposition}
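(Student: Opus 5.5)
The plan is to prove the estimate at a few endpoint configurations of \((p,q,r)\) and then fill in the admissible range by (bilinear) complex interpolation of Schatten classes, as recalled above. The admissible set \(\{(1/p,1/q)\in[0,1]^2:\ 1/p+1/q\ge1\}\) is the triangle with vertices \((1,0)\), \((0,1)\), \((1,1)\). It therefore suffices to have the bound at these three vertices, since \(1/r=1/p+1/q-1\) is affine in \((1/p,1/q)\).

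\emph{The \(L^\infty\) endpoints.} Take \(p=1,q=\infty\) (and symmetrically \(p=\infty,q=1\)), so that \(r=\infty\). Since \(\rho(w)\) and \(P\) are unitary, \(\rho(-w)S\rho(w)\) has the same singular values as \(S\), and \(PTP\) has the same singular values as \(T\). Hence trace duality gives
\[
|S\star T(w)|\le \|\rho(-w)S\rho(w)\|_{\Scal_1}\|PTP\|_{\Scal_\infty}=\|S\|_{\Scal_1}\|T\|_{\Scal_\infty}
\]
for every \(w\). For \(S\in\Scal_1\) and \(T\in\Scal_\infty\) this is already meaningful, because the operator inside the trace is trace class.

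\emph{The \(L^1\) endpoint \(p=q=r=1\).} This is the main step. I would first treat rank-one operators \(S=(\cdot,g)f\) and \(T=(\cdot,k)h\). Then
\[
S\star T(w)=(\rho(w)Ph,g)\,(\rho(-w)f,Pk),
\]
so \(S\star T\) is a product of two ambiguity-type functions of the form \(w\mapsto(\rho(w)\phi,\psi)\). By the Moyal orthogonality relation, each such function satisfies \(\|(\rho(\cdot)\phi,\psi)\|_{L^2(\RR^{2d})}=c_d\|\phi\|\,\|\psi\|\). Here \(c_d\) is a power of \(2\pi\); it can be read off from the Hilbert--Schmidt identity, since \((\rho(w)\phi,\psi)\) is essentially the \(\rho\)-symbol of a rank-one operator. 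Cauchy--Schwarz then gives
\[
\|S\star T\|_{L^1}\le c_d^2\|f\|\|g\|\|h\|\|k\|.
\]
For general \(S,T\in\Scal_1\), I would use the singular value decompositions \(S=\sum_n s_n(S)(\cdot,g_n)f_n\) and \(T=\sum_m s_m(T)(\cdot,k_m)h_m\). Bilinearity of \(\star\) and the triangle inequality in \(L^1\) then yield \(\|S\star T\|_{L^1}\le c_d^2\|S\|_{\Scal_1}\|T\|_{\Scal_1}\). The series converge absolutely in \(L^1\), and also pointwise, by the \(L^\infty\) bound above.

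\emph{Interpolation.} First fix \(T\in\Scal_1\). The map \(S\mapsto S\star T\) is bounded \(\Scal_1\to L^1\) and \(\Scal_\infty\to L^\infty\), so it is bounded \(\Scal_p\to L^p\). It is initially defined on \(\Scal_1\) and extends by density for \(p<\infty\), with \(p=\infty\) covered directly. This gives the whole edge \(q=1\), and the edge \(p=1\) follows by the same argument or by symmetry. Next fix \(S\in\Scal_p\). The map \(T\mapsto S\star T\) is bounded \(\Scal_1\to L^p\) and \(\Scal_{p'}\to L^\infty\), where the latter follows from Hölder's inequality for traces together with unitary invariance, exactly as in the \(L^\infty\) endpoint. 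Interpolating with \(1/q=(1-\theta)+\theta/p'\) gives \(1/r=(1-\theta)/p\), which is precisely \(1/p+1/q-1\). This covers the full triangle.

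The only delicate points are the normalization constant in Moyal's identity, which is why the statement is written with \(\lesssim\), and making sure the definition of \(S\star T\) for non-trace-class \(S\) or \(T\) is consistent with the density extension. I expect the \(L^1\) endpoint via Moyal's identity to be the heart of the argument.
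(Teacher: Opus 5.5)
Your proposal is correct and is essentially the standard argument behind the references the paper cites for this proposition (Werner, Luef--Skrettingland, M\"uller), since the paper itself gives no proof: an $L^1$ endpoint from Moyal's orthogonality relation, an $L^\infty$ endpoint from H\"older's inequality and unitary invariance, and then complex interpolation of Schatten classes. The only difference is cosmetic: the usual presentation gets the $L^1$ bound from $\int_{\RR^{2d}}S\star T=(2\pi)^d\tr(S)\tr(T)$ for positive $S,T$ plus a decomposition into positive parts, while your rank-one splitting with Cauchy--Schwarz rests on the same square-integrability of $\rho$.
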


These preliminaries provide the analytic framework for comparing Schatten norms of Weyl operators with \(L^p(\RR^{2d})\)-norms of their phase-space symbols. In the next section we recall the Hermite--Laguerre correspondence, used in the proof of the main estimates in Section~\ref{Section-Mainestimate}, together with the Laguerre estimates needed for the examples in Section~\ref{Section-example}.

\section{Hermite--Laguerre correspondence and auxiliary estimates}\label{Section-Laguerre}
\subsection{Hermite projections and Laguerre functions}\label{Subsection-backgroundhermitelaguerre}
We first recall the Hermite decomposition of \(L^2(\RR^d)\). Let
\[
H_1=-\frac{\dd^2}{\dd x^2}+x^2
\]
be the \emph{one-dimensional Hermite operator}. For \(k\in\NN\), define the \emph{normalized Hermite polynomial}:
\[
h_k(x)\defeq\frac{(-1)^k\ee^{x^2/2}}{(2^k k!\sqrt{\pi})^{1/2}}\frac{\dd^k}{\dd x^k}\ee^{-x^2}.
\]
Then \(\{h_k\}_{k\in\NN}\) is a complete orthonormal basis of \(L^2(\RR)\), and
\[
H_1h_k=(2k+1)h_k.
\]

In dimension \(d\), for a multi-index
\[
\mathbf k=(k_1,\dots,k_d)\in\NN^d,
\]
we define
\[
h_{\mathbf k}(x)\defeq h_{k_1}(x_1)\cdots h_{k_d}(x_d),\,\,\, x=(x_1,\dots,x_d)\in\RR^d.
\]
The family \(\{h_{\mathbf k}\}_{\mathbf k\in\NN^d}\) is a complete orthonormal basis of \(L^2(\RR^d)\). Moreover, if
\[
H_d=-\Delta+|x|^2=\sum_{j=1}^d\left(-\frac{\partial^2}{\partial x_j^2}+x_j^2\right),
\]
then
\begin{equation}\label{Hermitebasis}
H_dh_{\mathbf k}=(d+2|\mathbf k|)h_{\mathbf k},\,\,\,|\mathbf k|=k_1+\cdots+k_d.
\end{equation}
Thus the eigenspaces of \(H_d\) are indexed by the integer \(k=|\mathbf k|\). This motivates the definition of the \emph{orthogonal projection} onto the \(k\)-th Hermite eigenspace:
\[
P_k f
\defeq\sum_{|\mathbf k|=k}(f,h_{\mathbf k})_{L^2(\RR^d)}h_{\mathbf k}.
\]
 Equivalently, every \(f\in L^2(\RR^d)\) admits the decomposition
\[
f=\sum_{k=0}^{\infty}P_k f.
\]
Here \(P_k f\) is orthogonal projection of \(f\) on the eigenspace 
\[
\mathcal V_k\defeq\operatorname{span}\{h_{\mathbf k}:\mathbf k\in\NN^d,\ |\mathbf k|=k\},
\]
which corresponds to the eigenvalue \(d+2k\).
We now explain how these Hermite projections appear from radial functions on the phase space \(\RR^{2d}\). For multi-indices \(\mathbf k,\mathbf j\in\NN^d\), define the matrix coefficient
\[
h_{\mathbf k\mathbf j}(z)\defeq (\rho(z)h_{\mathbf k},h_{\mathbf j})_{L^2(\RR^d)},\,\,\,z\in\RR^{2d}.
\]
These functions live on the phase space \(\RR^{2d}\). In particular, the diagonal coefficients
\[
h_{\mathbf k\mathbf k}(z)=(\rho(z)h_{\mathbf k},h_{\mathbf k})_{L^2(\RR^d)}
\]
encode the action of the phase-space shift \(\rho(z)\) on the Hermite state \(h_{\mathbf k}\).

Summing these diagonal coefficients over all Hermite functions with the same energy level \(|\mathbf k|=k\) gives a radial function on \(\RR^{2d}\). More precisely, for \(\alpha>-1\), \(L_k^\alpha\) denotes the \emph{generalized Laguerre polynomial} of degree \(k\) and type \(\alpha\):
\[
L_k^\alpha(t)\defeq\frac{t^{-\alpha}\ee^t}{k!}\frac{\dd^k}{\dd t^k}\left(\ee^{-t}t^{k+\alpha}\right),\,\,\, t> 0,\,\, k\in\NN,
\]
which is explicitly given by
\[
L_k^\alpha(t)=\sum_{j=0}^{k}\frac{\Gamma(k+\alpha+1)}{\Gamma(k-j+1)\Gamma(j+\alpha+1)}\frac{(-t)^{j}}{j!}.
\]

With our normalization, \(\rho\) corresponds to Thangavelu's projective representation \(W(z)=(2\pi)^d\rho(iz)\). Thus the Weyl transform in \cite[p.~11]{T93} is the same integrated Schr\"odinger representation used here for radial functions up to some constant. With this convention, by \cite[(1.3.42)4]{T93}, one has
\begin{equation}\label{Def-laguerre}
\sum_{|\mathbf k|=k}h_{\mathbf k\mathbf k}(z)=L_k^{d-1}\left(\frac12|z|^2\right)\ee^{-|z|^2/4}\eqdef\varphi_k(z).
\end{equation}
We call \(\varphi_k\) the \(k\)-th \emph{Laguerre function}. In this sense, \(\varphi_k\) is the radial phase-space function associated with the \(k\)-th Hermite eigenspace \(\mathcal V_k\).

The operator-theoretic meaning of this correspondence is obtained by integrating the Schr\"odinger representation. Applying the integrated Schrödinger representation to the Laguerre functions gives the fundamental identity
\begin{equation}\label{Ide-projection}
\rho(\varphi_k)=P_k,
\end{equation}
which can be found in \cite[Theorem 1.3.6]{T93}. Therefore the radial phase-space function \(\varphi_k\) corresponds exactly to the
projection onto \(\mathcal V_k\). 
Since \(P_k\) is an orthogonal projection, we have
\[
P_k^*=P_k,\,\,\,P_k^2=P_k.
\]
Hence the spectrum of \(P_k\) is contained in \(\{0,1\}\). On the eigenspace
corresponding to the eigenvalue \(d+2k\), the operator \(P_k\) acts as the identity, while it
vanishes on the orthogonal complement. Therefore the nonzero singular values of \(P_k\) are
all equal to \(1\), and their number is precisely
\[
d_k=\operatorname{Rank}(P_k)
=\#\{\alpha\in\NN^d:|\alpha|=k\}
=\binom{k+d-1}{d-1}.
\]
Consequently, for \(1\le p\leq\infty\), with the convention \(\frac1\infty=0\),
\begin{equation}\label{For-normpPk}
\|P_k\|_{\Scal_p}
=\left(\sum_{j=1}^{d_k}1^p\right)
=d_k^{\frac1p}.
\end{equation}

Identity \eqref{Ide-projection} is the key reason why Laguerre functions are useful in the radial setting. If a radial function \(f\) on \(\RR^{2d}\) admits a Laguerre expansion
\[
    f=\sum_{k=0}^{\infty}c_k\varphi_k,
\]
then
\[
    \rho(f)=\sum_{k=0}^{\infty}c_k\rho(\varphi_k)=\sum_{k=0}^{\infty}c_kP_k.
\]
Thus \(\rho(f)\) is diagonal with respect to the Hermite decomposition, and Schatten norm estimates for \(\rho(f)\) reduce to estimates involving the coefficients \(c_k\) and the projections \(P_k\).

Finally, we record the spectral property of the Laguerre functions that will be
used below. Let
\[
    \mathcal H_{2d}\defeq -\Delta_z+\frac14|z|^2,
    \,\,\, z\in\RR^{2d}
\]
be the \emph{scaled Hermite operator}. Then, by \cite[(1.3.25)]{T93},
\begin{equation}\label{Rela-eigenvalue}
    \mathcal H_{2d}\varphi_k=(2k+d)\varphi_k.
\end{equation}

\begin{remark}
Our notation is partly adapted from M\"uller~\cite[Section~3]{M26} and Thangavelu~\cite[Chapter~1]{T93}. The notation for the Weyl transform, the integrated Schr\"odinger representation, and Schatten classes follows closely that of M\"uller, while the notation for Hermite functions, special Hermite functions, and Laguerre functions follows Thangavelu. Throughout the paper, however, we use the Fourier transform and the integrated Schr\"odinger representation with the normalizations fixed in Section~\ref{Section-pre}; hence some constants involving powers of \(2\pi\) differ from those in the references.

Since \(\varphi_k\) is radial on \(\RR^{2d}\), we shall use the same notation for its radial profile. Thus, for \(z\in\RR^{2d}\) and \(r=|z|\),
\[
    \varphi_k(z)=\varphi_k(r)=L_k^{d-1}\left(\frac12 r^2\right)\ee^{-r^2/4}.
\]

For further background on Hermite functions, special Hermite functions, Laguerre functions, their orthogonality relations, and their asymptotic estimates, we refer to \cite[Chapter~1]{T93}.
\end{remark}

\subsection{Auxiliary estimates for Laguerre functions}\label{Subsection-Laguerreestimates}
We now collect the Laguerre estimates needed in Section~\ref{Section-example}. They concern the symplectic Fourier transform of \(\varphi_k\), the \(L^p\)-asymptotics of \(\varphi_k\), and decay estimates for its derivatives in the region \(|z|\gtrsim \sqrt{k}\).

We first record the \emph{Schwartz-type seminorm} which will be used to measure the regularity and decay of phase-space functions. For \(\phi\in\Scal(\RR^{2d})\) and \(N\in\NN\), define
\begin{equation}\label{Def-N2norm}
\|\phi\|_{(N,2)}
\defeq\sum_{|\alpha|+|\beta|\leq N}\|\partial^\alpha(z^\beta \phi)\|_{L^2(\RR^{2d})}.
\end{equation}

\iffalse
============================Fourier gauss==========================================
\begin{lemma}\label{Lem-guassfourier}
	Let \(a>0\). Then, for \(w\in\RR^{2d}\), 
    \[
    \SF\left(\ee^{-a|z|^2}\right)(w) = (2a)^{-d}\ee^{-\frac{|w|^2}{4a}}. 
    \]
\end{lemma}
\begin{proof}
Set \(f(z)=\ee^{-a|z|^2}\). Since \(f\) is radial and \(J\) is orthogonal, the Euclidean Fourier transform \(\widehat f\) is also radial. Combining with \eqref{Rela-twotransforms}, we deduce that 
\[
\SF(f)(w) =(2\pi)^{-d} \widehat f(Jw)= (2\pi)^{-d} \widehat f(w). 
\]
Using the standard Euclidean Fourier transform of a Gaussian on \(\RR^{2d}\) (see, for instance, Stein--Weiss~\cite[Corollary 1.27 (a)]{SW71}), we have
\[
\widehat f(w)
=\int_{\RR^{2d}}\ee^{-a|z|^2}\ee^{-iw\cdot z}\,\dd z
=\left(\frac{\pi}{a}\right)^d\ee^{-\frac{|w|^2}{4a}}.
\]
Therefore
\[
\SF\left(\ee^{-a|z|^2}\right)(w)=(2\pi)^{-d}\left(\frac{\pi}{a}\right)^d\ee^{-\frac{|w|^2}{4a}}=(2a)^{-d}\ee^{-\frac{|w|^2}{4a}},
\]
as desired.
\end{proof}
================================================================================
\fi
We next compute the symplectic Fourier transform of \(\varphi_k\). This identity will be used in Section~\ref{Section-example} to compare the \(L^p\)-norms of \(\SF\varphi_k\) with those of \(\varphi_k\).
\begin{lemma}\label{Lem-Laguerrefourier} 
For the Laguerre functions \eqref{Def-laguerre} and for \(w\in\RR^{2d}\), we have
\begin{equation}\label{For-Laguerrefourier}
\SF \varphi_k(w) = 2^d(-1)^k L_k^{d-1}\left(2|w|^2\right)\ee^{-|w|^2}=2^d(-1)^k\varphi_k(2w). 
\end{equation}
\end{lemma}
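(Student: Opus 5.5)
The plan is to reduce the claim to the Euclidean Fourier transform of a radial function and then use the classical Laguerre–Hankel transform formula. By \eqref{Rela-twotransforms}, $\SF\varphi_k(w)=(2\pi)^{-d}\widehat{\varphi_k}(Jw)$. Since $\varphi_k$ is radial and $J$ is orthogonal, this equals $(2\pi)^{-d}\widehat{\varphi_k}(w)$. So the symplectic transform reduces to the Euclidean Fourier transform on $\RR^{2d}$ of the radial function $\varphi_k(z)=L_k^{d-1}(\tfrac12|z|^2)\ee^{-|z|^2/4}$.

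\emph{First route: generating functions.} I would use the Laguerre generating function
\[
\sum_{k\ge0}L_k^{d-1}(t)s^k=(1-s)^{-d}\ee^{-ts/(1-s)},\qquad |s|<1.
\]
Multiplying by $\ee^{-t/2}$ with $t=|z|^2/2$ gives
\[
\sum_k s^k\varphi_k(z)=(1-s)^{-d}\exp\Bigl(-\frac{|z|^2}{4}\cdot\frac{1+s}{1-s}\Bigr),
\]
which is a Gaussian in $z$. Its Fourier transform is explicit. Put $a=\frac{1+s}{4(1-s)}$. Then
\[
\widehat{\ee^{-a|z|^2}}(w)=(\pi/a)^d\ee^{-|w|^2/(4a)},
\qquad
\frac{1}{4a}=\frac{1-s}{1+s}.
\]
Collecting factors gives
\[
(2\pi)^{-d}(1-s)^{-d}\Bigl(\frac{4\pi(1-s)}{1+s}\Bigr)^d\ee^{-|w|^2(1-s)/(1+s)}
=2^d(1+s)^{-d}\ee^{-|w|^2(1-s)/(1+s)}.
\]
Next I rewrite this with $s\mapsto -s$ and $t=2|w|^2$:
\[
2^d\sum_k(-s)^kL_k^{d-1}(2|w|^2)\ee^{-|w|^2}
=2^d(1+s)^{-d}\exp\Bigl(-|w|^2-\frac{2|w|^2(-s)}{1+s}\Bigr).
\]
The exponent simplifies to $-|w|^2\frac{1+s-2s}{1+s}=-|w|^2\frac{1-s}{1+s}$, so the two generating functions coincide. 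Comparing coefficients of $s^k$ then gives \eqref{For-Laguerrefourier}. The last equality there is immediate, because $\varphi_k(2w)=L_k^{d-1}(2|w|^2)\ee^{-|w|^2}$.

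\emph{Main obstacle.} The delicate step is justifying termwise Fourier transformation of the series. I would handle it with $s\in(-1/2,1/2)$ real. The bound $|L_k^{d-1}(t)|\le \binom{k+d-1}{k}\ee^{t/2}$ makes $\sum_k|s|^k|\varphi_k(z)|$ uniformly bounded. That bound is not integrable, however, so instead I would pair with Schwartz test functions. The cleanest way is to check convergence in $L^2(\RR^{2d})$. The $\varphi_k$ are orthogonal in $L^2$ with $\|\varphi_k\|_2^2\simeq d_k$, which grows only polynomially. Hence the series converges in $L^2$ for $|s|<1$, and $\widehat{\ }$ is continuous on $L^2$. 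The right-hand side converges in the same way, so the coefficients of the two power series in $s$ agree as $L^2$ functions, and by continuity they agree pointwise.

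\emph{Fallback route.} Alternatively, I could use the Hankel transform formula $\int_0^\infty L_k^\alpha(r^2/2)\ee^{-r^2/4}J_\alpha(r\rho)r^{\alpha+1}\,dr=(-1)^k\cdots$ together with Bochner's formula for radial Fourier transforms with $\alpha=d-1$. This is the classical fact that Laguerre functions are eigenfunctions of the Hankel transform. The generating function route is self-contained, so I would present that one.
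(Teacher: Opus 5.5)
Your proposal is correct and follows the paper's proof. The paper also sums the Laguerre generating function into the Gaussian $G_t(r)=(1-t)^{-d}\ee^{-\frac{1+t}{4(1-t)}r^2}$, computes $\SF G_t(w)=2^d(1+t)^{-d}\ee^{-|w|^2\frac{1-t}{1+t}}=2^dG_{-t}(2|w|)$, and compares coefficients of $t^k$. Your $L^2$-convergence argument justifying the termwise transform fills in a step the paper leaves implicit.
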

\begin{proof} 
Let \(t\in(-1,1)\). We use the generating function identity for the Laguerre polynomials, 
\begin{equation}\label{For-generating}
\sum_{k=0}^{\infty}L_k^\alpha(x)t^k = (1-t)^{-(\alpha+1)} \ee^{-\frac{t}{1-t}x}, \,\,\,\alpha>-1, 
\end{equation}
which can be found in \cite[(1.1.45)]{T93}. Taking \(\alpha=d-1\) and \(x=\frac12 r^2\) in \eqref{For-generating} with \(r=|z|\), gives 
\begin{equation}\label{For-Gt}
G_t(r) \defeq \sum_{k=0}^{\infty} L_k^{d-1}\left(\frac12 r^2\right)t^k \ee^{-r^2/4} = (1-t)^{-d} \ee^{-\frac{1+t}{4(1-t)}r^2}. 
\end{equation}
Since \(\frac{1+t}{4(1-t)}>0\), using the standard Euclidean Fourier transform of a Gaussian on \(\RR^{2d}\), see \cite[Corollary~1.27(a)]{SW71}, together with \eqref{Rela-twotransforms}, we have
\[
\begin{aligned} 
\SF(G_t)(w) %= (1-t)^{-d} \left(2\cdot \frac{1+t}{4(1-t)}\right)^{-d}  \ee^{-\frac{|w|^2}{4\cdot \frac{1+t}{4(1-t)}}} 
= 2^d(1+t)^{-d} \ee^{-|w|^2\frac{1-t}{1+t}}. 
\end{aligned} 
\] 
On the other hand, combining with the definition of \(G_t\) in \eqref{For-Gt}, 
\[
\SF(G_t)(w)=2^dG_{-t}(2s),\,\,\,s=|w|.
\] 
Comparing the coefficients of \(t^k\) on both sides, we obtain 
\[ 
\SF \varphi_k(w)=\SF\left[ L_k^{d-1}\left(\frac12 |z|^2\right)\ee^{-|z|^2/4} \right](w) = 2^d(-1)^k L_k^{d-1}\left(2|w|^2\right)\ee^{-|w|^2}=2^d(-1)^k\varphi_k(2w).
\] 
\end{proof}

We next recall the \(L^p\)-asymptotics of the Laguerre functions. These estimates will be used in Section~\ref{Section-example} to determine the leading-order size of the examples constructed from \(\varphi_k\). We omit the critical case \(p=p_c\), where an additional logarithmic factor appears, since it will not be used below.
\begin{lemma}\label{Lem-Lpnormphik}
Set
\(
p_c\defeq \frac{4d}{2d-1}.
\)
Then, as \(k\to\infty\), the following asymptotics hold, with implicit constants independent of \(k\):
\begin{enumerate}
\item If \(1\le p<p_c\), then
\[
\|\varphi_k\|_{L^p(\RR^{2d})}
\simeq_{d,p}
k^{-\frac12+\frac{d}{p}}.
\]

\item If \(p_c<p\le\infty\), then
\[
\|\varphi_k\|_{L^p(\RR^{2d})}
\simeq_{d,p}
k^{d-1-\frac{d}{p}}.
\]
\end{enumerate}
\end{lemma}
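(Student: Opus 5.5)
The plan is to reduce the statement to a one-dimensional weighted integral of a normalized Laguerre function and to feed in the classical uniform pointwise estimates of Askey--Wainger and Muckenhoupt type, as recorded in \cite[Lemma~1.5.3]{T93}. Set $\alpha=d-1$, $x=r^2/2$ and $\nu=4k+2\alpha+2$. We use the normalized Laguerre functions
\[
\mathcal L_k^\alpha(x)=\Bigl(\tfrac{k!}{\Gamma(k+\alpha+1)}\Bigr)^{1/2}\ee^{-x/2}x^{\alpha/2}L_k^\alpha(x).
\]
Then
\[
\varphi_k(r)=\Bigl(\tfrac{\Gamma(k+d)}{k!}\Bigr)^{1/2}x^{-\alpha/2}\mathcal L_k^\alpha(x),\qquad \tfrac{\Gamma(k+d)}{k!}\simeq k^{d-1}.
\]
Since $r^{2d-1}\,\dd r\simeq x^{d-1}\,\dd x$, polar coordinates give, for $p<\infty$,
\[
\|\varphi_k\|_{L^p}^p\simeq_d k^{\frac{p(d-1)}2}\int_0^\infty x^{(d-1)(1-\frac p2)}|\mathcal L_k^\alpha(x)|^p\,\dd x .
\]
For $p=\infty$ we argue directly with the same pointwise bounds, using $|L_k^{d-1}(x)|\le L_k^{d-1}(0)\simeq k^{d-1}$ for the supremum.

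For the upper bound we split $(0,\infty)$ into four regions and use the corresponding estimates:
\[
|\mathcal L_k^\alpha(x)|\lesssim
\begin{cases}
(x\nu)^{\alpha/2}, & 0<x\le 1/\nu,\\
(x\nu)^{-1/4}, & 1/\nu\le x\le \nu/2,\\
\nu^{-1/4}\bigl(\nu^{1/3}+|\nu-x|\bigr)^{-1/4}, & \nu/2\le x\le 3\nu/2,\\
\ee^{-\gamma x}, & x\ge 3\nu/2.
\end{cases}
\]
\begin{itemize}
\item The region near the origin contributes $k^{p(d-1)}k^{-d}$, that is, $k^{d-1-d/p}$ at the level of norms.
\item The oscillatory region contributes $k^{p(d-1)/2}\nu^{-p/4}\int_{1/\nu}^{\nu/2}x^{e}\,\dd x$, where $e=(d-1)(1-\tfrac p2)-\tfrac p4$. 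One checks that $e>-1$ is equivalent to $p<p_c$. In that case the upper endpoint dominates, and the contribution is $k^{d-p/2}$, i.e.\ $k^{d/p-1/2}$ for the norm. If instead $e<-1$, the lower endpoint dominates and reproduces $k^{d-1-d/p}$.
\item The Airy region $|x-\nu|\lesssim\nu$ contributes at most $k^{p(d-1)/2}\nu^{(d-1)(1-p/2)}\nu^{-p/4}\int(\nu^{1/3}+|\nu-x|)^{-p/4}\,\dd x$. A direct check shows this never exceeds the larger of the two previous terms.
\item The exponential tail is negligible.
\end{itemize}
Comparing the two main contributions, $k^{-1/2+d/p}$ dominates exactly when $p<p_c$, and $k^{d-1-d/p}$ dominates when $p>p_c$. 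This gives the upper bounds in both cases. At $p=p_c$ we have $e=-1$, which produces the logarithm and is why that case is excluded.

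For the lower bounds we only need to exhibit the dominant piece from below. When $p>p_c$ we use $L_k^\alpha(x)=L_k^\alpha(0)(1+O(kx))$ with $L_k^\alpha(0)=\binom{k+\alpha}{k}\simeq k^{d-1}$. This gives $|\varphi_k(r)|\gtrsim k^{d-1}$ for $r\le c/\sqrt k$, hence $\|\varphi_k\|_p\gtrsim k^{d-1}(k^{-1/2})^{2d/p}=k^{d-1-d/p}$; for $p=\infty$ it is just the value at $r=0$. When $p<p_c$ we use the Fej\'er--Hilb type asymptotic
\[
\mathcal L_k^\alpha(x)=\sqrt{\tfrac2\pi}\,(x\nu)^{-1/4}\Bigl(\cos\bigl(\sqrt{x\nu}-\tfrac{\alpha\pi}2-\tfrac\pi4\bigr)+O\bigl((x\nu)^{-1/2}+(x/\nu)^{1/2}\bigr)\Bigr),
\]
valid uniformly on $x\in[C/\nu,\delta\nu]$ for fixed small $\delta$. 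Averaging $|\cos|^p$ over the oscillations (its mean is a positive constant) shows that the integral over $[\delta\nu/2,\delta\nu]$ is $\gtrsim \nu^{-p/4}\int x^{e}\,\dd x\simeq\nu^{e+1-p/4}$. This matches the upper bound up to constants and yields $k^{-1/2+d/p}$.

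I expect the main obstacle to be this last step, the lower bound in the oscillatory regime. It needs an asymptotic with an error term that is uniform up to $x\simeq\delta\nu$, and not merely the upper envelope. I would take it from Szeg\H{o}'s uniform Bessel-type approximation, or directly from Thangavelu's $L^p$ computation of $\mathcal L_k^\alpha$ \cite[Lemma~1.5.4]{T93} together with the above change of variables. The remaining steps are bookkeeping of exponents.
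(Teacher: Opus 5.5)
Your argument is correct in outline, but it follows a different route from the paper. Both proofs make the same reduction to $k^{p(d-1)/2}\int_0^\infty x^{(d-1)(1-p/2)}|\mathcal L_k^{d-1}(x)|^p\,\dd x$. After that, the paper does no region analysis. It quotes Markett's two-sided weighted $L^p$ asymptotics \cite[Lemma~1]{M82} with $\beta=2(d-1)(\frac12-\frac1p)$ and $\alpha+\beta=d-1$, and the threshold $\beta=\frac2p-\frac12$ is exactly $p=p_c$. You instead reprove this special case of Markett's lemma from the pointwise bounds of \cite[Lemma~1.5.3]{T93}. Your exponent bookkeeping checks out: at the level of $p$-th powers the origin region gives $k^{p(d-1)-d}$ and the bulk gives $k^{d-p/2}$; $e>-1$ is equivalent to $p<p_c$; the Airy region is always subordinate; and $e=-1$ produces the logarithm. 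The lower bound for $p>p_c$ via $L_k^\alpha(x)=L_k^\alpha(0)(1+O(kx))$ on $|z|\lesssim k^{-1/2}$ is also fine. The paper's route is two lines but rests entirely on a black box. Yours is longer, but it shows where the dichotomy comes from: the spike of height $\simeq k^{d-1}$ on $|z|\lesssim k^{-1/2}$ competes with the oscillatory bulk at $|z|\sim\sqrt k$.

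Two points need correcting.

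First, for $p=\infty$ the inequality you want is $\ee^{-x/2}|L_k^{\alpha}(x)|\le L_k^{\alpha}(0)$ for $\alpha\ge0$; without the exponential factor it is false.

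Second, and more seriously, your displayed oscillatory asymptotic is not valid uniformly on $[C/\nu,\delta\nu]$. The phase $\sqrt{x\nu}$ is accurate only for $x\ll\nu^{1/3}$. The true phase $\Phi$ satisfies $\Phi'(x)\approx\frac12\sqrt{(\nu-x)/x}$, so it differs from $\sqrt{x\nu}$ by $\asymp x^{3/2}\nu^{-1/2}$, which is of order $\delta^{3/2}\nu$ on your interval. The averaging argument survives with the correct Plancherel--Rotach type phase, since $\Phi'\simeq\delta^{-1/2}$ there.

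You can, however, avoid asymptotics altogether:
\begin{itemize}
\item Use the normalization $\int_0^\infty|\mathcal L_k^\alpha|^2\,\dd x=1$. Your pointwise bounds give $\int_0^{\delta\nu}|\mathcal L_k^\alpha|^2\,\dd x\lesssim\nu^{-1}+\delta^{1/2}$ and only an exponentially small contribution from $x\ge 3\nu/2$. Hence, for $\delta$ small and $k$ large, at least half of the $L^2$ mass lies in $J=[\delta\nu,3\nu/2]$.
\item The same bounds give $\|\mathcal L_k^\alpha\|_{L^3(J)}\lesssim\nu^{-1/6}$.
\item For $p\ge2$, H\"older with $|J|\simeq\nu$ yields $\|\mathcal L_k^\alpha\|_{L^p(J)}\gtrsim\nu^{1/p-1/2}$. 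For $p<2$, log-convexity $\|f\|_{L^2(J)}\le\|f\|_{L^p(J)}^{\theta}\|f\|_{L^3(J)}^{1-\theta}$ with $\frac12=\frac{\theta}{p}+\frac{1-\theta}{3}$ yields the same bound.
\item The weight is $\simeq\nu^{(d-1)(1-p/2)}$ on $J$, so $\|\varphi_k\|_{L^p}\gtrsim k^{d/p-1/2}$ for every $p<\infty$.
\end{itemize}
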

\begin{proof}
We use the definition of Thangavelu's normalized Laguerre functions in \cite[(1.4.10)]{T93},
\begin{equation}\label{Def-mathcalL}
\mathcal{L}_k^\alpha\left(\frac{r^2}{2}\right)
=\left(\frac{k!}{(k+\alpha)!}\right)^{1/2}\ee^{-r^2/4}\left(\frac{r^2}{2}\right)^{\alpha/2}L_k^\alpha\left(\frac{r^2}{2}\right).
\end{equation}
For \(\alpha=d-1\), this gives the relation
\begin{equation}\label{Rela-LL}
\varphi_k(r)
=\left(\frac{k!}{(k+d-1)!}\right)^{-1/2}\left(\frac{r^2}{2}\right)^{-\frac{d-1}{2}}\mathcal L_k^{d-1}\left(\frac{r^2}{2}\right).
\end{equation}
Using polar coordinates and the change of variables \(t=\frac{1}{2}r^2\), %so that
%\[
%r^{2d-1}\,\dd r=(2t)^{d-1}\,\dd t,
%\]
we obtain, for \(1\le p<\infty\),
\begin{equation}\label{Est-varphiLpnorm}
\begin{aligned}
\|\varphi_k\|_{L^p(\RR^{2d})}^p
&\simeq_d \int_0^\infty |\varphi_k(r)|^p r^{2d-1}\,\dd r  \\
&\simeq_d \left(\frac{k!}{(k+d-1)!}\right)^{-p/2}\int_0^\infty\left|\mathcal L_k^{d-1}(t)\right|^p t^{(d-1)(1-\frac{p}{2})}\,\dd t  \\
&\simeq_dk^{\frac{p(d-1)}{2}}\left\|\mathcal L_k^{d-1}(t)\,t^{(d-1)(\frac1p-\frac12)}\right\|_{L^p(\RR_+)}^p.
\end{aligned}
\end{equation}
We apply here Markett's weighted \(L^p\)-estimate for normalized Laguerre functions. Specialized to the present parameters, \cite[Lemma~1]{M82} gives the following three regimes as \(k\to\infty\):
\begin{itemize}
		\item If $1\le p\le 4$ and $\beta<\tfrac{2}{p}-\tfrac12$, then
		$$
		\big\| \mathcal L_k^{\alpha+\beta}(x)\,x^{-\beta/2}\big\|_{L^p(\RR_{+})}\simeq k^{\frac{1}{p}-\frac12-\frac{\beta}{2}}.
		$$
		\item If $1\le p\le 4$ and $\beta>\tfrac{2}{p}-\tfrac12$, then
		$$
		\big\| \mathcal L_k^{\alpha+\beta}(x)\,x^{-\beta/2}\big\|_{L^p(\RR^{+})}\simeq k^{\frac{\beta}{2}-\frac{1}{p}}.
		$$
		\item If $4<p\leq\infty$ and $\beta>\tfrac{4}{3p}-\tfrac13$, then
		$$
		\big\| \mathcal L_k^{\alpha+\beta}(x)\,x^{-\beta/2}\big\|_{L^p(\RR^{+})}\simeq k^{\frac{\beta}{2}-\frac{1}{p}}.
		$$
	\end{itemize}
    Put 
    \[ 
    \beta\defeq 2(d-1)\left(\frac12-\frac1p\right), \,\,\,\alpha=\frac{2}{p}(d-1)>-\frac2p,\,\,\,\alpha+\beta=d-1>-1. 
    \] 
     Therefore the weighted norm appearing above can be written as 
     \[ 
     \left\| \mathcal L_k^{d-1}(t)\, t^{(d-1)(\frac1p-\frac12)} \right\|_{L^p(\RR_+)} = \left\| \mathcal L_k^{\alpha+\beta}(t)\, t^{-\beta/2} \right\|_{L^p(\RR_+)}. 
     \]
     Substituting these estimates into \eqref{Est-varphiLpnorm} and simplifying the powers of \(k\), we obtain the two stated asymptotic regimes. Indeed, the condition \(\beta=2/p-1/2\) is equivalent to \(p=p_c\), which is the reason why we have this critical point.
     \end{proof}

Finally, we need a decay estimate for derivatives of \(\varphi_k\). This will allow us, in Section~\ref{Section-example}, to cut off \(\varphi_k\) outside a ball of radius comparable to \(\sqrt{k}\) while producing only an exponentially small error.

\begin{lemma}\label{Lem-Derivativephik}
Let \(\alpha\in\NN^{2d}\) be a multi-index and set \(m=|\alpha|\). 
There exists a constant \(\gamma>0\) such that, whenever
\[
|z|\ge \sqrt{6(2k+d+m)},
\]
we have
\[
|\partial^\alpha\varphi_k(z)|\lesssim_{d,\alpha}\ee^{-\frac{\gamma}{4}|z|^2}.
\]
\end{lemma}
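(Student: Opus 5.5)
The plan is to use the explicit polynomial structure of \(\varphi_k\) and to bound the Laguerre polynomial and its derivatives crudely in the region far beyond the oscillatory zone \(|z|\lesssim\sqrt{k}\). Write \(\varphi_k(z)=q_k(z)\ee^{-|z|^2/4}\), where \(q_k(z)=L_k^{d-1}(|z|^2/2)\) is a polynomial in \(z\) of degree \(2k\). By Leibniz,
\[
\partial^\alpha\varphi_k(z)=\sum_{\beta\le\alpha}\binom{\alpha}{\beta}\,\partial^{\beta}q_k(z)\,\partial^{\alpha-\beta}\ee^{-|z|^2/4},
\]
and \(\partial^{\alpha-\beta}\ee^{-|z|^2/4}=Q_{\alpha-\beta}(z)\ee^{-|z|^2/4}\) with \(Q\) a Hermite-type polynomial satisfying \(|Q_{\alpha-\beta}(z)|\lesssim_{\alpha}(1+|z|)^{m}\). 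So it suffices to bound \(|\partial^\beta q_k(z)|\) for \(|z|^2\ge 6(2k+d+m)\) by something like \(\ee^{c|z|^2}\) with \(c<1/4\), uniformly in \(k\).

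For the Laguerre factor, use the explicit sum \(L_k^{d-1}(t)=\sum_{j}\binom{k+d-1}{k-j}\frac{(-t)^j}{j!}\), which gives \(|L_k^{d-1}(t)|\le L_k^{d-1}(-t)\). By the generating function \eqref{For-generating}, for any \(s\in(0,1)\),
\[
L_k^{d-1}(-t)\le s^{-k}(1-s)^{-d}\ee^{\frac{s}{1-s}t}.
\]
Choosing \(s\) so that \(s^{-k}\) is absorbed is the key point: since \(k\le t/6\) (because \(t=|z|^2/2\ge 3(2k+d+m)\ge 6k\)), we get \(s^{-k}\le \ee^{(t/6)\log(1/s)}\). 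Taking, say, \(s=1/4\) gives
\[
|L_k^{d-1}(t)|\lesssim_d \ee^{t(\frac{\log 4}{6}+\frac13)}\approx \ee^{0.565\,t}=\ee^{0.283|z|^2},
\]
which slightly exceeds \(1/4\). I would therefore optimize: set \(s\) small with \(f(s)=\frac{\log(1/s)}{6}+\frac{s}{1-s}\) and want \(f(s)\cdot\frac12<\frac14\), i.e. \(f(s)<1/2\). At \(s=1/6\), \(f\approx0.299+0.2=0.499<1/2\), which just works. This tightness is why the factor \(6\) appears in the hypothesis. If the margin turns out insufficient, I would refine by using the exact bound \(\sum_j\binom{k+d-1}{k-j}t^j/j!\le \binom{k+d-1}{k}\max_j\dots\), or compare with \(\ee^{t}\)-type terms more carefully.

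Derivatives of \(q_k\) are handled similarly. Use \(\frac{\dd}{\dd t}L_k^{\alpha}=-L_{k-1}^{\alpha+1}\), so \(\partial^\beta q_k\) is a finite sum, with \(\alpha\)-dependent coefficients, of terms \(z^{\gamma}L_{k-i}^{d-1+i}(|z|^2/2)\) with \(i\le|\beta|\le m\) and \(|\gamma|\le m\). The same generating-function bound with \(\alpha=d-1+i\) applies, using \(k-i\le k\), and costs only a factor \((1-s)^{-d-m}\) and polynomial powers of \(|z|\). This is where \(m\) enters the threshold.

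Collecting the bounds yields \(|\partial^\alpha\varphi_k(z)|\lesssim_{d,\alpha}(1+|z|)^{2m}\ee^{-(\frac14-c)|z|^2}\) with \(c<1/4\). Absorbing the polynomial factor into a slightly smaller exponent gives \(\ee^{-\frac{\gamma}{4}|z|^2}\) for some \(\gamma\in(0,1)\) independent of \(k\). The main obstacle is the numerical step: making the choice of \(s\) uniform in \(k\) while staying strictly below the Gaussian rate \(1/4\) under exactly the stated threshold \(6(2k+d+m)\).
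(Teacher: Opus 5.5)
Your proof is correct, and it takes a genuinely different route from the paper's.

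\textbf{The paper's route.} It writes $\varphi_k=(-1)^kT_k^{d-1}(|z|^2/4)$ with $T_k^b(t)=(-1)^k\ee^{-t}L_k^b(2t)$ and iterates Markett's identity $\partial_tT_k^b=T_{k-1}^{b+1}-T_k^{b+1}$. It then imports the decay bound $|\mathcal L_h^b(x)|\lesssim\ee^{-\gamma x}$ for $x\ge 6h+3(b+1)$ from Thangavelu's Lemma~1.5.3. Converting back from the normalized functions costs factors $(1+h)^{b/2}|z|^{-b}$, which are bounded in the region. The constant $6$ in the hypothesis is inherited from that cited range.

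\textbf{Your route.} You use no asymptotic input. Positivity of the coefficients gives $|L_k^{a}(t)|\le L_k^{a}(-t)$. The generating function gives $L_k^{a}(-t)\le s^{-k}(1-s)^{-(a+1)}\ee^{\frac{s}{1-s}t}$. The hypothesis gives $k\le t/6$ for $t=|z|^2/2$. So everything reduces to showing $f(s)=\frac{\log(1/s)}{6}+\frac{s}{1-s}<\frac12$.

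This inequality does hold, so you can drop the hedge and simply fix $s$. Indeed $f(1/6)=\frac{\log 6}{6}+\frac15\approx0.4986$. The optimal choice $s=4-\sqrt{15}$ gives $f\approx0.4894$, hence an explicit absolute $\gamma$ of order $10^{-2}$, uniform in $k$. The derivative terms cost only $s^{-(k-i)}\le s^{-k}$, $(1-s)^{-(d+i)}\le(1-s)^{-(d+m)}$ and polynomial factors, exactly as you say.

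\textbf{What each route buys.} Yours is elementary and self-contained, with explicit constants. The paper's is robust, because it uses genuine decay past the turning point. Your majorization discards all cancellation in $L_k^{a}$ and works only because the threshold constant is large enough. If $6$ is replaced by anything below about $5.8$, then $\min_s f\ge\frac12$; for instance, $5$ gives $\min_s f\approx0.556$. In that regime the largest single term of $L_k^{a}(-t)$ already beats $\ee^{t/2}$, so the method itself breaks down, not just the choice of $s$. In particular, $6$ lying just above your critical value is a coincidence, not the reason $6$ appears in the statement.
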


\begin{proof}
We use the notation 
\[
T_k^b(t)\defeq (-1)^k\ee^{-t}L_k^b(2t),
\]
which corresponds to the notation \(\mathcal L_k^{(b)}\) used in \cite{M15}. By the definitions of \(\varphi_k\) in \eqref{Def-laguerre} and \(\mathcal L_{k}^b\) in \eqref{Def-mathcalL}, we have 
\begin{equation}\label{Rela-varphiT}
\varphi_k(z)=(-1)^k T_k^{d-1}\left(\frac{|z|^2}{4}\right),
\end{equation}
and
\begin{equation}\label{Rela-LT}
		|T_{k}^{b}(t)|=\,2^{-\frac{b}{2}}\left(\frac{k!}{(k+b)!}\right)^{-\frac{1}{2}} \,t^{-\frac{b}{2}}\,|\mathcal{L}_{k}^{b}(2t)|.
	\end{equation}
By \cite[Lemma~5]{M15}, one has
\begin{equation*}
\partial_t T_k^b(t)=T_{k-1}^{b+1}(t)-T_k^{b+1}(t).
\end{equation*}
Iterating this identity gives, for every integer \(\ell\ge0\),
\begin{equation}\label{For-chain}
\partial_t^\ell T_k^b(t)
=\sum_{j=0}^{\ell}(-1)^{\ell-j}\binom{\ell}{j}T_{k-j}^{b+\ell}(t),
\end{equation}
where \(T_{k-j}^{b+\ell}\equiv0\) if \(k-j<0\).

Set \(t=|z|^2/4\). By repeated use of the chain rule starting from \eqref{Rela-varphiT}, \(\partial^\alpha\varphi_k(z)\) is a finite sum of terms of the form
\[
P_{\alpha,\ell}(z)\,
\partial_t^\ell T_k^{d-1}(t),
\,\,\, 0\le \ell\le m,
\]
where \(P_{\alpha,\ell}\) is a polynomial of degree at most \(m\). Hence by \eqref{For-chain},
\begin{equation}\label{Est-derivativevarphik}
|\partial^\alpha\varphi_k(z)|
\lesssim_\alpha(1+|z|)^m\sum_{\ell=0}^{m}\sum_{j=0}^{\ell}\left|T_{k-j}^{d-1+\ell}\left(\frac{|z|^2}{4}\right)\right|.
\end{equation}
	Now we want to reduce the estimate for \(T_h^b\) to the estimate of \(\mathcal{L}_h^b\), which can be found in \cite[Lemma 1.5.3]{T93}; according to this estimate, in the case where \(x\geq 6h+3(b+1)\), one obtains 
	\[
	|\mathcal{L}_{h}^{b}(x)|\lesssim\ee^{-\gamma x},\,\,\mathrm{for}\,\,\mathrm{fixed}\,\,\gamma>0.
	\]
Once we have this estimate, together with the relation \eqref{Rela-LT} between
\(\mathcal L_h^b\) and \(T_h^b\), it follows that
\[
\left|T_h^b\left(\frac{|z|^2}{4}\right)\right|\lesssim_b(1+h)^{\frac b2}|z|^{-b}\ee^{-\frac{\gamma}{2}|z|^2},
\,\,\,|z|\ge \sqrt{6(2h+b+1)}.
\]
Now assume that \(|z|\ge \sqrt{6(2k+d+m)}\), since
\[
(2(k-j)+d-1+\ell+1)\leq 2(k-j)+d+\ell\leq 2k+d+m
\]
for all \(0\le j\le \ell\le m\), then
\[
\left|
T_{k-j}^{d-1+\ell}\left(\frac{|z|^2}{4}\right)
\right|\lesssim_{d,\alpha}(1+k-j)^{\frac{d-1+\ell}{2}}|z|^{-(d-1+\ell)}\ee^{-\frac{\gamma}{2}|z|^2}, \,\,\forall \,0\le j\le \ell\le m.
\]
Combining this with \eqref{Est-derivativevarphik}, we obtain
\[
\begin{aligned}
|\partial^\alpha\varphi_k(z)|
\lesssim_{d,\alpha}(1+|z|)^m\sum_{\ell=0}^{m}\sum_{j=0}^{\ell}(1+k-j)^{\frac{d-1+\ell}{2}}|z|^{-(d-1+\ell)}\ee^{-\frac{\gamma}{2}|z|^2}.
\end{aligned}
\]
Since \(|z|^2\ge 6(2k+d+m)\), we have \(1+k-j\lesssim |z|^2\). Hence
\[
(1+k-j)^{\frac{d-1+\ell}{2}}|z|^{-(d-1+\ell)}\lesssim_{d,\alpha} 1.
\]
Therefore
\[
|\partial^\alpha\varphi_k(z)|\lesssim_{d,\alpha}(1+|z|)^m\ee^{-\frac{\gamma}{2}|z|^2}.
\]
Finally, after decreasing \(\gamma\) if necessary, the polynomial factor can be absorbed into the exponential decay, and we get the desired estimate.
\end{proof}
The estimates collected in this subsection will be used in Section~\ref{Section-example} to analyze the truncated Laguerre examples. 

\section{Refined Schatten restriction estimates}\label{Section-Mainestimate}
In this section we prove the main comparison estimates with explicit dependence on the support radius. The argument follows the general strategy of M\"uller \cite{M26}: one first proves endpoint estimates for regularized operators, identifies the adjoint estimate through Werner convolution and Schatten duality, and then interpolates between the endpoints. The main improvement occurs at the trace class endpoint. In \cite[Lemma 3.2]{M26}, the corresponding endpoint estimate is obtained from the full Hermite expansion and leads to a loss controlled by a high-order Schwartz seminorm. Here we use the radial Hermite--Laguerre correspondence from Section~\ref{Section-Laguerre} to obtain a sharper trace-class estimate for radial symbols; see Lemma~\ref{Lem-S1normN2norm}. This refinement is the source of the improved power \(2d+1+\varepsilon\) for any \(\varepsilon>0\) in Theorem~\ref{Mainthm}.

We first recall the Hilbert--Schmidt estimate, also known as the Plancherel theorem for the Weyl transform. This result is standard, and we omit the proof; see, for instance, \cite[Theorem~1.2.1]{T93}.

\begin{theorem}\label{Thm-schmidthilbert}
%\textcolor{red}{Consider the linear map
%\begin{equation}\label{Def-operatorT}
    %T:L^2(\RR^{2d})\to\Scal_2, \,\,\, T(g)=L_g.
%\end{equation}
%Then,
\begin{equation*}%\label{Idn-schmidthilbert}
   % \|Tg\|_{\Scal_2}=
   \|L_g\|_{\Scal_2}=(2\pi)^{-d/2}\|g\|_{L^2(\RR^{2d})}.
\end{equation*}
%Moreover, \(T\) is an isometric isomorphism up to the normalization constant \((2\pi)^{-d/2}\).}
\end{theorem}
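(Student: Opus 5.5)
To prove $\|L_g\|_{\Scal_2}=(2\pi)^{-d/2}\|g\|_{L^2(\RR^{2d})}$, I will work with $\rho(F)$ for $F=\SF g$, since $L_g=\rho(\SF g)$ and $\SF$ is unitary on $L^2(\RR^{2d})$. It then suffices to show
\[
\|\rho(F)\|_{\Scal_2}=(2\pi)^{-d/2}\|F\|_{L^2(\RR^{2d})}.
\]
I will first prove this for $F\in\Scal(\RR^{2d})$ and then extend by density.

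For Schwartz $F$, the kernel formula \eqref{For-kernelrhoF1} gives
\[
k_F(t,x)=(2\pi)^{-d}\mathcal F_2F\Bigl(t-x,-\tfrac{t+x}{2}\Bigr).
\]
Since the Hilbert--Schmidt norm equals the $L^2$ norm of the kernel, I compute
\[
\|\rho(F)\|_{\Scal_2}^2=(2\pi)^{-2d}\int\!\!\int\Bigl|\mathcal F_2F\bigl(t-x,-\tfrac{t+x}{2}\bigr)\Bigr|^2\,\dd t\,\dd x .
\]
Next I change variables to $y=t-x$ and $\eta=-(t+x)/2$. The Jacobian determinant of $(t,x)\mapsto(y,\eta)$ has absolute value $1$, since each coordinate block has determinant $\pm1$ up to the factors $1$ and $1/2$, and $|\det|=|(-1)(-\tfrac12)-(1)(-\tfrac12)|^d=1$. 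The integral therefore becomes $(2\pi)^{-2d}\|\mathcal F_2F\|_{L^2(\RR^{2d})}^2$.

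By Plancherel in the second variable, with the convention fixed in Section~\ref{Section-pre}, we have $\|\mathcal F_2F\|_{L^2}^2=(2\pi)^d\|F\|_{L^2}^2$. Hence
\[
\|\rho(F)\|_{\Scal_2}^2=(2\pi)^{-d}\|F\|_{L^2}^2 .
\]

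The only delicate point is the extension to general $g\in L^2(\RR^{2d})$, where $L_g$ is a priori defined distributionally. The map $F\mapsto\rho(F)$ is an isometry up to a constant from a dense subspace into $\Scal_2$, so it extends uniquely to all of $L^2(\RR^{2d})$. I then need this extension to agree with the distributional definition. If $F_n\to F$ in $L^2$, the kernels $k_{F_n}$ converge in $L^2(\RR^{2d})$, hence in $\Scal'$. They also converge to $k_F$ in $\Scal'$, by continuity of \eqref{For-kernelrhou} on $\Scal'$ (Remark~\ref{Rem-Linjective}). So the limits coincide, and the identity holds for all $g\in L^2(\RR^{2d})$.
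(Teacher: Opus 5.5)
Your proof is correct and is essentially the standard argument behind \cite[Theorem~1.2.1]{T93}, which the paper cites in place of a proof: you take the kernel \eqref{For-kernelrhoF1}, make the unimodular change of variables $(t,x)\mapsto(t-x,-(t+x)/2)$, apply Plancherel in the second variable, and extend from $\Scal(\RR^{2d})$ to $L^2(\RR^{2d})$ by density, using the $\Scal'$-continuity of $u\mapsto k_u$. The one slip is cosmetic: the $2\times2$ Jacobian block is $\begin{pmatrix}1&-1\\-\frac12&-\frac12\end{pmatrix}$, whose determinant is $-1$, and your garbled formula still gives the correct value $|\det|=1$.
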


The preceding theorem gives the exact \(\Scal_2\)-endpoint. We next record a simple technical estimate which will be used to pass from Hermite spectral information to the Schwartz-type seminorms \(\|\cdot\|_{(t,2)}\) defined in \eqref{Def-N2norm}.

\begin{lemma}\label{Lem-revised}
Let \(t\in\NN\). Then, for every \(f\in\Scal(\RR^{2d})\), 
\[
\left\|(1+\mathcal H_{2d})^{\frac t2}f\right\|_{L^2(\RR^{2d})}
\lesssim_{d,t}
\|f\|_{(t,2)}.
\]
\end{lemma}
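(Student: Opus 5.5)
The plan is to treat the even and odd cases of \(t\) separately. The even case reduces to a finite expansion. The odd case then follows from the even case by an interpolation (or Cauchy--Schwarz) argument.

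\emph{Even case.} Let \(t=2m\). Since \(1+\mathcal H_{2d}\) is a differential operator with polynomial coefficients, we have
\[
(1+\mathcal H_{2d})^{m}=\Bigl(1-\Delta_z+\tfrac14|z|^2\Bigr)^m .
\]
Expanding this product yields a finite linear combination, with coefficients depending only on \(d\) and \(m\), of words in the operators \(\partial_{z_j}\), multiplication by \(z_j\), and the identity. Each word contains at most \(2m\) letters. Using the commutation relation \([\partial_{z_j},z_l]=\delta_{jl}\), every such word can be rewritten in the normal-ordered form \(\partial^\alpha z^\beta\). Commutation only removes letters, so each rewritten term still satisfies \(|\alpha|+|\beta|\le 2m\), and its constant depends only on \(d\) and \(m\). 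Applying this to \(f\) and using the triangle inequality in \(L^2\) gives
\[
\|(1+\mathcal H_{2d})^m f\|_{L^2}\lesssim_{d,m}\|f\|_{(2m,2)} .
\]

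\emph{Odd case.} Let \(t=2m+1\). Since \(\mathcal H_{2d}\) is self-adjoint and nonnegative on \(\Scal(\RR^{2d})\), we can write
\[
\|(1+\mathcal H_{2d})^{t/2}f\|_{L^2}^2=\bigl((1+\mathcal H_{2d})^{m+1}f,\,(1+\mathcal H_{2d})^{m}f\bigr)_{L^2}.
\]
One could bound this by Cauchy--Schwarz, but that would involve \(\|f\|_{(2m+2,2)}\), which is more than the statement allows. To stay at order \(t\), I will instead move one factor across. Set \(g=(1+\mathcal H_{2d})^m f\). Then
\[
\bigl((1+\mathcal H_{2d})g,g\bigr)=\|g\|_2^2+\|\nabla g\|_2^2+\tfrac14\||z|g\|_2^2 .
\]
The right-hand side consists of \(L^2\) norms of \(g\), \(\partial_j g\), and \(z_j g\). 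Each of these is an operator of order at most \(2m+1\) applied to \(f\). By the same normal-ordering argument as in the even case, each is bounded by \(\|f\|_{(2m+1,2)}\).

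The main obstacle is purely bookkeeping: checking that the normal ordering never increases the total order \(|\alpha|+|\beta|\). This holds because each commutator \([\partial_{z_j},z_l]\) lowers the word length by two. One should also note that the seminorm is written as \(\partial^\alpha(z^\beta\phi)\), which is already normal-ordered with derivatives on the outside, so the rewritten terms match its form directly. No spectral theory beyond self-adjointness on \(\Scal\) is needed.
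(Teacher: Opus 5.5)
Your proof is correct, but it takes a different route from the paper's. The paper does not split by parity. It introduces $A_j=-\partial_{z_j}+\tfrac{z_j}{2}$, writes $\mathcal H_{2d}=\sum_{j}A_jA_j^*+d$, expands $f$ in the scaled Hermite basis $\psi_\nu$ (where $\mathcal H_{2d}\psi_\nu=(|\nu|+d)\psi_\nu$), and uses the raising relation $A_j\psi_\nu=\sqrt{\nu_j+1}\,\psi_{\nu+e_j}$. This gives the two-sided equivalence $\|(1+\mathcal H_{2d})^{t/2}f\|_2\simeq\sum_{|\alpha|\le t}\|A^\alpha f\|_2$ for all integers $t$ at once. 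It then expands $A^\alpha$ into terms $z^\beta\partial^\gamma$, and needs a separate Leibniz-plus-induction step to pass from $\|z^\beta\partial^\gamma f\|_2$ to $\|\partial^\gamma(z^\beta f)\|_2$.

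You avoid the Hermite basis entirely. For even $t$ you normal-order the differential operator $(1-\Delta_z+\tfrac14|z|^2)^m$ directly into the ordering $\partial^\alpha z^\beta$ used in $\|\cdot\|_{(t,2)}$. For odd $t$, the quadratic-form identity $((1+\mathcal H_{2d})g,g)=\|g\|_2^2+\|\nabla g\|_2^2+\tfrac14\||z|g\|_2^2$, with $g=(1+\mathcal H_{2d})^m f$, handles the half-integer power at exactly order $2m+1$. Your opening sentence mentions ``interpolation (or Cauchy--Schwarz)'', but as you yourself observe, interpolating between the powers $m$ and $m+1$ would cost $\|f\|_{(2m+2,2)}$. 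It is the quadratic-form step that makes the odd case work, so the summary should say that.

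Your route is more elementary: integration by parts and commutator bookkeeping, plus the basic functional-calculus fact that $\Scal(\RR^{2d})$ lies in the domain of every power of $1+\mathcal H_{2d}$, where the operator acts as the differential expression. The paper's route gives a stronger, two-sided comparison with $\sum_{|\alpha|\le t}\|A^\alpha f\|_2$ and fits the spectral viewpoint of Lemma~\ref{Lem-S1normN2norm}. Its cost is relying on the normalization of the raising relation for the rescaled basis.
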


\begin{proof}
Recall that
\[
\mathcal H_{2d}=-\Delta_z+\frac14|z|^2,\,\,\,z\in \RR^{2d}.
\]
For \(j=1,\dots,2d\), define
\[
A_j\defeq-\partial_{z_j}+\frac{z_j}{2},
\,\,\,
A_j^*\defeq\partial_{z_j}+\frac{z_j}{2}.
\]
Then \(A_j^*\) is the \(L^2(\RR^{2d})\)-adjoint of \(A_j\). A direct computation gives
\[
A_jA_j^*
=\left(-\partial_{z_j}+\frac{z_j}{2}\right)\left(\partial_{z_j}+\frac{z_j}{2}\right)
=-\partial_{z_j}^2+\frac14 z_j^2-\frac12.
\]
Hence
\begin{equation}\label{Expre-H2d}
\mathcal H_{2d}=\sum_{j=1}^{2d}A_jA_j^*+d,
\end{equation}
which is the analogue of \cite[(1.1.29)]{T93} for the scaled Hermite operator. Thus \(\mathcal H_{2d}\) can be controlled through the operators \(A_j\) and \(A_j^*\), while each \(A_j\) is simply a first-order operator of the form ``multiplication by \(z_j\) minus derivative.''

We also record the commutation relation. Since
\[
\left[\partial_{z_i},z_j\right]=\delta_{ij},
\]
we have, for any \(i,j=1,\cdots,2d\),
\begin{equation}\label{For-commutatorAAstar}
[A_i^*,A_j]=\delta_{ij},
\,\,\,
[A_i,A_j]=[A_i^*,A_j^*]=0.
\end{equation}

Let \(\{\psi_\nu\}_{\nu\in\NN^{2d}}\) denote the orthonormal basis of
\(L^2(\RR^{2d})\) such that
\begin{equation}\label{scaledHermitebasis}
\mathcal H_{2d}\psi_\nu=(|\nu|+d)\psi_\nu,\,\,\,\nu\in\NN^{2d}.
\end{equation}
Precisely, if \(\{h_\nu\}_{\nu\in \NN^{2d}}\) is the standard Hermite basis, then from \eqref{Hermitebasis}, 
\[
H_{2d}h_\nu=(-\Delta_z+|z|^2)h_\nu=2(d+|\nu|)h_\nu.
\]
Define 
\[
\psi_\nu\defeq 2^{-d/2}h_\nu\left(\frac{z}{\sqrt2}\right),
\]
then it satisfies \eqref{scaledHermitebasis}. We now explain the raising relation. First, by \eqref{Expre-H2d} and \eqref{scaledHermitebasis},
\[
\sum_{j=1}^{2d}A_jA_j^*\psi_\nu=|\nu|\psi_\nu.
\]

\iffalse
==============================raising operator====================================
That is, \(A_jA_j^*\) counts the \(j\)-th Hermite level. The operator \(A_j\) raises the \(j\)-th Hermite index, i.e.,
\[
    A_j\psi_\nu=c_{\nu,j}\psi_{\nu+e_j}
\]
for some constant \(c_{\nu,j}\). To determine \(c_{\nu,j}\), we use
\[
    A_j^*A_j=A_jA_j^*+1,
\]
which follows from \eqref{For-commutatorAAstar}. Then
\[
\|A_j\psi_\nu\|_2^2
=\langle A_j^*A_j\psi_\nu,\psi_\nu\rangle
=\langle (A_jA_j^*+1)\psi_\nu,\psi_\nu\rangle=\langle (\nu_j+1)\psi_\nu,\psi_\nu\rangle=\nu_j+1.
\]
With the standard real normalization of the Hermite basis, the coefficient is
positive. We get
===============================================================================
\fi

On the other hand, with our normalization, after the corresponding scaling, the analogue of the standard raising relation \cite[(1.1.30)]{T93} is 
\begin{equation}\label{For-raisingrelation}
A_j\psi_\nu=\sqrt{\nu_j+1}\,\psi_{\nu+e_j}.
\end{equation}

Now write
\begin{equation}\label{For-fexpansion}
f=\sum_{\nu\in\NN^{2d}}c_\nu\psi_\nu.
\end{equation}
By the spectral theorem and combining with \eqref{scaledHermitebasis},
\begin{equation}\label{spectralHermiteNorm}
\left\|(1+\mathcal H_{2d})^{\frac t2}f\right\|_2^2
=\sum_{\nu\in\NN^{2d}}(1+|\nu|+d)^t |c_\nu|^2.
\end{equation}

We next compare this with the \(L^2\)-norms of products of the creation operators.
For a multi-index \(\alpha\in\NN^{2d}\), set \(A^\alpha \defeq A_1^{\alpha_1}\cdots A_{2d}^{\alpha_{2d}}.\) Using \eqref{For-raisingrelation} repeatedly gives
\[
A^\alpha\psi_\nu=C_{\nu,\alpha}\psi_{\nu+\alpha},
\]
where
\[
|C_{\nu,\alpha}|^2=\prod_{j=1}^{2d}(\nu_j+1)(\nu_j+2)\cdots(\nu_j+\alpha_j) \simeq_{d,\alpha}\prod_{j=1}^{2d}(\nu_j+1)^{\alpha_j}
\]
since
\[
%\prod_{j=1}^{2d}
(\nu_j+1)^{\alpha_j}\le%\prod_{j=1}^{2d}
(\nu_j+1)(\nu_j+2)\cdots(\nu_j+\alpha_j) \le%\prod_{j=1}^{2d}
\alpha_j^{\alpha_j}(\nu_j+1)^{\alpha_j},\,\,\,\forall \,j=1,\cdots,2d.
\]
Since the family \(\{\psi_\nu\}\) is orthonormal, together with \eqref{For-fexpansion}, we obtain
\[
\|A^\alpha f\|_2^2=\sum_{\nu\in\NN^{2d}}|C_{\nu,\alpha}|^2 |c_\nu|^2.
\]
Therefore
\begin{equation}\label{Est-Aalpha}
\sum_{|\alpha|\le t}\|A^\alpha f\|_2^2\simeq_{d,t}\sum_{\nu\in\NN^{2d}}\left(\sum_{|\alpha|\le t}\prod_{j=1}^{2d}(\nu_j+1)^{\alpha_j}\right)|c_\nu|^2.
\end{equation}
Using the elementary inequality
\[
(1+|\nu|+d)^t\simeq_{d,t}\sum_{|\alpha|\le t}\prod_{j=1}^{2d}(\nu_j+1)^{\alpha_j},
\]
we get, together with \eqref{spectralHermiteNorm} and \eqref{Est-Aalpha},
\begin{equation}\label{Esti-DominationmathcalL}
\left\|(1+\mathcal H_{2d})^{\frac t2}f\right\|_2
\simeq_{d,t}
\sum_{|\alpha|\le t}\|A^\alpha f\|_2.
\end{equation}

It remains to compare the right-hand side with \(\|f\|_{(t,2)}\). Because of the definition of \(A_j\), each product \(A^\alpha\), with \(|\alpha|\le t\), is a finite linear combination of operators of the form
\[
z^\beta\partial^\gamma,\,\,\,|\beta|+|\gamma|\le t.
\]
Hence by \eqref{Def-N2norm},
\begin{equation}\label{Est-A}
\|A^\alpha f\|_2\lesssim_{d,t}\sum_{|\beta|+|\gamma|\le t}\|z^\beta\partial^\gamma f\|_2.
\end{equation}
On the other hand, we claim
\[
    \sum_{|\beta|+|\gamma|\le t}\|z^\beta\partial^\gamma f\|_2
    \simeq_{d,t}
    \sum_{|\beta|+|\gamma|\le t}\|\partial^\gamma(z^\beta f)\|_2 .
\]
By Leibniz' formula,
\[
    \partial^\gamma(z^\beta f)=z^\beta\partial^\gamma f+ \sum_{\substack{0<\nu\le\gamma\\ \nu\le\beta}} c_{\beta,\gamma,\nu}\,z^{\beta-\nu}\partial^{\gamma-\nu}f .
\]
This immediately gives one inequality between the two families of seminorms.
The reverse inequality follows by induction on \(|\beta|+|\gamma|\), since all
terms in the sum have strictly smaller total order. Hence we obtain the claim and then combining with \eqref{Est-A},
\[
\|A^\alpha f\|_2\lesssim_{d,t}\sum_{|\beta|+|\gamma|\le t}\|\partial^\gamma(z^\beta f)\|_2
=\|f\|_{(t,2)}.
\]
Combining this with \eqref{Esti-DominationmathcalL}, we conclude the result.
%\[
%\left\|(1+\mathcal H_{2d})^{\frac t2}f\right\|_{L^2(\RR^{2d})}\lesssim_{d,t}\|f\|_{(t,2)},
%\]
%as desired.
\end{proof}

We now prove the trace-class endpoint estimate. This is the main refinement of \cite[Lemma 3.2]{M26} in the radial setting: the Hermite--Laguerre correspondence allows us to replace the full Hermite expansion by a diagonal Laguerre expansion and obtain a lower-order seminorm.
\begin{lemma}\label{Lem-S1normN2norm}
Let \(f\in \Scal(\RR^{2d})\) be radial. Then
\begin{equation}\label{Ine-S1normlemma}
    \|L_{\SF f}\|_{\Scal_1}\lesssim_{d,s}\|(1+\mathcal H_{2d})^{\frac s2}f\|_{L^2(\RR^{2d})},\,\,\,\forall s>d+1.
\end{equation}
In particular, taking \(s=d+2\), we have
\begin{equation}\label{Ine2-S1normlemma}
\|L_{\SF f}\|_{\Scal_1}\lesssim_d\|f\|_{(d+2,2)}.
\end{equation}
\end{lemma}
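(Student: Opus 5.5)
Since \(\SF^2=\Id\), we have \(L_{\SF f}=\rho(f)\), so the task is to bound \(\|\rho(f)\|_{\Scal_1}\) for radial Schwartz \(f\). The plan is to expand \(f\) in the Laguerre functions \(\varphi_k\), which are the radial eigenfunctions of \(\mathcal H_{2d}\). Write
\[
f=\sum_{k=0}^\infty c_k\varphi_k,\qquad c_k=\frac{(f,\varphi_k)_{L^2}}{\|\varphi_k\|_{L^2}^2}.
\]
This expansion is available because radial functions in \(L^2(\RR^{2d})\) are spanned by the \(\varphi_k\), and the \(\varphi_k\) are orthogonal since they are eigenfunctions of the self-adjoint operator \(\mathcal H_{2d}\) with distinct eigenvalues \(2k+d\), by \eqref{Rela-eigenvalue}. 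Completeness in the radial subspace is standard; see \cite[Chapter~1]{T93}. For \(f\in\Scal\) the coefficients decay rapidly, so the series converges in \(L^1(\RR^{2d})\) as well. Since \(\rho\) is bounded \(L^1\to\BB\), identity \eqref{Ide-projection} gives
\[
\rho(f)=\sum_k c_kP_k,
\]
with the series converging in operator norm. The \(P_k\) are mutually orthogonal projections, so this is a diagonal operator. By \eqref{For-normpPk},
\[
\|\rho(f)\|_{\Scal_1}=\sum_k|c_k|\,d_k,\qquad d_k\simeq_d (1+k)^{d-1}.
\]

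Next I compute the normalisation. Theorem~\ref{Thm-schmidthilbert} applied to \(\varphi_k\) gives
\[
\|P_k\|_{\Scal_2}^2=d_k=(2\pi)^{-d}\|\SF\varphi_k\|_2^2=(2\pi)^{-d}\|\varphi_k\|_2^2,
\]
using unitarity of \(\SF\) (Lemma~\ref{Lem-Laguerrefourier} gives the same thing). Hence \(\|\varphi_k\|_2^2=(2\pi)^d d_k\). Orthogonality and \eqref{Rela-eigenvalue} then give
\[
\|(1+\mathcal H_{2d})^{s/2}f\|_2^2=(2\pi)^d\sum_k(1+2k+d)^s|c_k|^2d_k .
\]

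The main step is Cauchy--Schwarz:
\[
\sum_k|c_k|d_k\le\Bigl(\sum_k(1+2k+d)^s|c_k|^2d_k\Bigr)^{1/2}\Bigl(\sum_k(1+2k+d)^{-s}d_k\Bigr)^{1/2}.
\]
The second factor is comparable to \(\bigl(\sum_k(1+k)^{d-1-s}\bigr)^{1/2}\). This is finite exactly when \(s>d\), so the condition \(s>d+1\) in the statement is more than enough; the weaker threshold \(s>d\) is already sufficient for this argument. This proves \eqref{Ine-S1normlemma}.

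For \eqref{Ine2-S1normlemma}, take \(s=d+2\), which is an integer exceeding \(d+1\), and apply Lemma~\ref{Lem-revised} with \(t=d+2\).

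The only delicate point is justifying the expansion and the identity \(\rho(f)=\sum c_kP_k\): completeness of \(\{\varphi_k\}\) among radial functions, and convergence strong enough to pass \(\rho\) through the sum. I would handle this through the \(L^1\) bound \(\|\varphi_k\|_{L^1}\lesssim k^{d-1/2}\) from Lemma~\ref{Lem-Lpnormphik} with \(p=1\), together with rapid decay of \(c_k\). That decay follows from the bound \(\sum_k(1+k)^{N}|c_k|^2d_k<\infty\) for every \(N\), which holds because \(f\in\Scal\).
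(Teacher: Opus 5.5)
Your proof is correct and stronger than the lemma as stated. It follows the paper's skeleton: the Laguerre expansion of the radial $f$, the identity $\rho(\varphi_k)=P_k$, the formula $\|\rho(f)\|_{\Scal_1}=\sum_k|c_k|d_k$, and the normalization $\|\varphi_k\|_{L^2}^2=(2\pi)^d d_k$. The two arguments differ only in the summation step.

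The paper bounds each coefficient separately,
$|(f,\varphi_k)|\le(1+2k+d)^{-s/2}\|(1+\mathcal H_{2d})^{s/2}f\|_{L^2}\|\varphi_k\|_{L^2}$.
This spends the full norm $\|(1+\mathcal H_{2d})^{s/2}f\|_{L^2}$ on every $k$, and the paper then needs $\sum_k(1+k)^{-s/2}d_k^{1/2}<\infty$, which forces $s>d+1$. You instead use Parseval in the Laguerre basis and a single Cauchy--Schwarz over $k$, so the weighted coefficients only have to be square-summable. This requires only $\sum_k(1+k)^{d-1-s}<\infty$, i.e.\ $s>d$.

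The gain matters for the rest of the paper. In the proof of Theorem~\ref{Thm-oneway} the lemma is applied to $\chi_R$ with $\|(1+\mathcal H_{2d})^{s/2}\chi_R\|_{L^2}\lesssim R^{d+s}$. Taking $s=d+\varepsilon$ there would replace $R^{2d+1+\varepsilon}$ by $R^{2d+\varepsilon}$. After interpolation, the exponent in Theorem~\ref{Mainthm} would improve to $(2d+\varepsilon)|1-2/p|$, which remains compatible with the lower bounds of Theorem~\ref{Thm-nontrivial}.

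Your justification of $\rho(f)=\sum_k c_kP_k$ also fills a gap the paper leaves implicit. You combine rapid decay of $c_k$ for Schwartz $f$, the polynomial bound on $\|\varphi_k\|_{L^1}$, and boundedness of $\rho\colon L^1\to\mathcal B(L^2)$ to get the convergence the paper passes over silently.
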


\begin{proof}
Let \(f\in\Scal(\RR^{2d})\) be radial. Then \(f\) admits the Laguerre expansion
\[
f=\sum_{k=0}^{\infty}c_k\varphi_k,
\,\,\,
c_k=\frac{(f,\varphi_k)_{L^2(\RR^{2d})}}{\|\varphi_k\|_{L^2(\RR^{2d})}^2}.
\]
Since \(\rho(\varphi_k)=P_k\), we have
\[
L_{\SF f}
=\rho(f)
=\sum_{k=0}^{\infty}c_kP_k.
\]
The projections \(P_k\) are mutually orthogonal. Hence
\[
\|L_{\SF f}\|_{\Scal_1}
=\sum_{k=0}^{\infty}|c_k|d_k.
\]

On the other hand, by \eqref{Ide-projection}, the Hilbert--Schmidt identity in Theorem~\ref{Thm-schmidthilbert}, and Lemma~\ref{Lem-Laguerrefourier}, we have
\[
\begin{aligned}
    \|P_k\|_{\Scal_2}
    &=\|\rho(\varphi_k)\|_{\Scal_2}
     =\|L_{\SF\varphi_k}\|_{\Scal_2} 
     =(2\pi)^{-d/2}\|\SF\varphi_k\|_{L^2(\RR^{2d})}  \\
    &=(2\pi)^{-d/2}2^d\|\varphi_k(2\cdot)\|_{L^2(\RR^{2d})}
    =(2\pi)^{-d/2}\|\varphi_k\|_{L^2(\RR^{2d})}.
\end{aligned}
\]
Combining this with \eqref{For-normpPk} for \(p=2\), namely
\(\|P_k\|_{\Scal_2}=d_k^{1/2}\), gives
\begin{equation}\label{Est-varphiL2norm}
    \|\varphi_k\|_{L^2(\RR^{2d})}^2=(2\pi)^d d_k.
\end{equation}
Consequently,
\[
    \|L_{\SF f}\|_{\Scal_1}=\sum_{k=0}^{\infty}|c_k|d_k
    =\sum_{k=0}^{\infty}\frac{|(f,\varphi_k)_{L^2(\RR^{2d})}|}{\|\varphi_k\|_2^2}d_k
    =(2\pi)^{-d}\sum_{k=0}^{\infty}|(f,\varphi_k)_{L^2(\RR^{2d})}|.
\]

We now estimate \((f,\varphi_k)_{L^2(\RR^{2d})}\). %Let \(s>d+1\). Because of \eqref{Rela-eigenvalue} and \eqref{Est-varphiL2norm}, we have
%\[
%\left\|(1+\mathcal H_{2d})^{-\frac s2}\varphi_k\right\|_{L^2(\RR^{2d})}
%=(1+2k+d)^{-\frac s2}\|\varphi_k\|_{L^2(\RR^{2d})}\simeq d_{k}^{1/2}(1+k)^{-\frac s2}.
%\]
The spectral theorem, combining with \eqref{Rela-eigenvalue}, gives, for every real \(s\ge0\),
\[
    (f,\varphi_k)_{L^2(\RR^{2d})}=\left( (1+\mathcal H_{2d})^{s/2}f,(1+\mathcal H_{2d})^{-s/2}\varphi_k\right)=(1+2k+d)^{-s/2}\left( (1+\mathcal H_{2d})^{s/2}f,\varphi_k\right).
\]
Hence, by Cauchy's inequality and \eqref{Est-varphiL2norm},
\[
\begin{aligned}
    |(f,\varphi_k)_{L^2(\RR^{2d})}|
    &\le(1+2k+d)^{-s/2}\left\|(1+\mathcal H_{2d})^{s/2}f\right\|_{L^2(\RR^{2d})}\|\varphi_k\|_{L^2(\RR^{2d})}  \\
    &\lesssim_{d,s}\|(1+\mathcal H_{2d})^{s/2}f\|_{L^2(\RR^{2d})}(1+k)^{-s/2}d_k^{1/2} .
\end{aligned}
\]
Therefore, using \(d_k\simeq_d(1+k)^{d-1}\),
\[
\begin{aligned}
    \left\|L_{\SF f}\right\|_{\Scal_1}
    %&\lesssim_{d,s}\sum_{k=0}^{\infty}(1+k)^{-s/2}d_k^{1/2}\left\|(1+\mathcal H_{2d})^{s/2}f\right\|_{L^2(\RR^{2d})} \\
    &\lesssim_{d,s}\left\|(1+\mathcal H_{2d})^{s/2}f\right\|_{L^2(\RR^{2d})}\sum_{k=0}^{\infty}(1+k)^{-\frac{s-d+1}{2}}.
\end{aligned}
\]
The series converges precisely when \(s>d+1\), which gives \eqref{Ine-S1normlemma}. Moreover, by Lemma~\ref{Lem-revised},
\begin{equation}\label{Est-scaledN2noem}
\left\|(1+\mathcal H_{2d})^{\frac s2}f\right\|_{L^2(\RR^{2d})}
\lesssim_{d,s}\|f\|_{(s,2)}.
\end{equation}
Setting \(s=d+2\), we finally obtain \eqref{Ine2-S1normlemma}.
\end{proof}

In what follows, choose a \textbf{radial} function \(\chi\in C_0^\infty(\RR^{2d})\) such that 
\[ 
\chi\equiv 1 \,\, \mathrm{on }\,\,\overline{B(0,1)}, \,\,\, \chi\equiv 0 \,\, \mathrm{on } \,\,B(0,2)^c, 
\] 
and set 
\[ 
\chi_R(z)\defeq \chi(z/R). 
\]

We now apply Lemma~\ref{Lem-S1normN2norm} to the radial cutoff functions \(\chi_R\). Since the derivatives of \(\chi_R(z)=\chi(z/R)\) scale explicitly with \(R\), one can easily get \(\left\|(1+\mathcal H_{2d})^{\frac s2}\chi_R\right\|_{L^2(\RR^{2d})}\lesssim R^{d+s}\) for \(R\geq 1\). This is the point at which the improvement over the \(R^{5d+2}\)-loss in \cite{M26} enters the proof.

\begin{theorem}\label{Thm-oneway}
Let \(R\ge1\), \(\psi_R\defeq \SF(\chi_R)\), and let \(v\in\Scal'(\RR^{2d})\) be such that \(L_v\in\Scal_p\). Then, for every
\(1\le p\le\infty\) and every \(\varepsilon>0\),
\begin{equation}\label{MainIneq-oneway}
\|v*\psi_R\|_{L^p(\RR^{2d})}\lesssim_{d,p,\varepsilon} R^{2d+1+\varepsilon}\|L_v\|_{\Scal_p}.
\end{equation}
\end{theorem}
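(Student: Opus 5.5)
The plan is to write $v*\psi_R$ as a Werner convolution of $L_v$ with the fixed trace-class operator $\rho(\chi_R)$. The Young-type inequality then reduces the claim to a trace-norm bound for $\rho(\chi_R)$, and that bound comes from the radial Lemma~\ref{Lem-S1normN2norm}.

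\textbf{Step 1: Werner convolution identity.} Set $u\defeq\SF v$. Since $\SF^2=\Id$, we have $\rho(u)=L_{\SF u}=L_v=:T$, and $T\in\Scal_p\subseteq\Scal_\infty$. Apply \eqref{Oper-star} of Lemma~\ref{Lem-WernerConvolution} with $F=\chi_R\in\Scal(\RR^{2d})$:
\[
L_v\star\rho(\chi_R)=(2\pi)^{-d}\,\SF u*\SF\chi_R=(2\pi)^{-d}\,v*\psi_R .
\]

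\textbf{Step 2: Young's inequality.} Apply Proposition~\ref{Prop-Younginequality} with $S=L_v\in\Scal_p$, $q=1$ and $r=p$; the exponent condition $\frac1p+1=1+\frac1p$ holds for every $1\le p\le\infty$. This gives
\[
\|v*\psi_R\|_{L^p}\lesssim_{d}\|L_v\|_{\Scal_p}\,\|\rho(\chi_R)\|_{\Scal_1}.
\]
Since $\rho(\chi_R)=L_{\SF\chi_R}$ and $\chi_R$ is radial and Schwartz, Lemma~\ref{Lem-S1normN2norm} with $s\defeq d+1+\varepsilon$ yields
\[
\|\rho(\chi_R)\|_{\Scal_1}\lesssim_{d,\varepsilon}\|(1+\mathcal H_{2d})^{s/2}\chi_R\|_{L^2}.
\]

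\textbf{Step 3: scaling in $R$.} First take an integer $t$. By Lemma~\ref{Lem-revised}, $\|(1+\mathcal H_{2d})^{t/2}\chi_R\|_2\lesssim\|\chi_R\|_{(t,2)}$. The chain rule gives
\[
\partial^\alpha\bigl(z^\beta\chi(z/R)\bigr)=R^{|\beta|-|\alpha|}\bigl(\partial^\alpha(z^\beta\chi)\bigr)(z/R),
\]
and the $L^2$ norm of a function of $z/R$ carries a factor $R^{d}$. Hence $\|\chi_R\|_{(t,2)}\lesssim_{t} R^{d+t}$ for $R\ge1$. For non-integer $s$, let $t_0=\lfloor s\rfloor$ and $t_1=t_0+1$. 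By the spectral theorem in the basis $\{\psi_\nu\}$ and Hölder's inequality, the map $s\mapsto\log\|(1+\mathcal H_{2d})^{s/2}f\|_2$ is convex. Interpolating between $t_0$ and $t_1$ then gives $\|(1+\mathcal H_{2d})^{s/2}\chi_R\|_2\lesssim R^{d+s}=R^{2d+1+\varepsilon}$. Combining with Step 2 proves \eqref{MainIneq-oneway}, with a constant depending only on $d,p,\varepsilon$.

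\textbf{Main obstacle.} The delicate point is the rigor of Steps 1--2 for a tempered distribution $v$ when $p=\infty$ or when $L_v$ is merely in $\Scal_p$. One must check that the Werner convolution $T\star\rho(\chi_R)$, defined through the trace with $T\in\Scal_p$ and $\rho(\chi_R)\in\Scal_1$, agrees with the distributional convolution $(2\pi)^{-d}v*\psi_R$. Lemma~\ref{Lem-WernerConvolution} already covers $T=\rho(u)\in\Scal_\infty$ with $F\in\Scal$, so I would rely on it directly. Young's inequality then applies to the well-defined function $w\mapsto\tr[\rho(-w)T\rho(w)P\rho(\chi_R)P]$. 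The exponent bookkeeping itself is routine.
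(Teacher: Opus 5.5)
Your proposal is correct and follows the paper's proof essentially step for step. Both arguments use the Werner convolution identity $L_v\star\rho(\chi_R)=(2\pi)^{-d}v*\psi_R$, then Young's inequality with $q=1$, $r=p$, then the radial trace-class bound of Lemma~\ref{Lem-S1normN2norm} with $s=d+1+\varepsilon$, and finally the integer-order scaling bound $\|\chi_R\|_{(N,2)}\lesssim R^{d+N}$, extended to real $s$ by log-convexity of $s\mapsto\|(1+\mathcal H_{2d})^{s/2}\chi_R\|_{L^2}$.
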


\begin{proof}
Since \(\psi_R=\SF(\chi_R)\) and \(\SF^2=\Id\), Lemma~\ref{Lem-WernerConvolution} together with \eqref{Conv-sym} gives, in \(\Scal'(\RR^{2d})\),
\[
    \rho(\SF v)\star\rho(\chi_R)=\SF\big((\SF v)\chi_R\big) =(2\pi)^{-d}v*\psi_R .
\]
Applying Young's inequality for Werner convolution in Proposition~\ref{Prop-Younginequality} and \eqref{Ine-S1normlemma}, for \(s>d+1\), we obtain
\[
\begin{aligned}
&\|v*\psi_R\|_{L^p(\RR^{2d})}
\simeq_d\|\rho(\SF v)\star\rho(\chi_R)\|_{L^p(\RR^{2d})}\\
&\le\|\rho(\SF v)\|_{\Scal_p}\|\rho(\chi_R)\|_{\Scal_1}\lesssim_{d,p,s} \|L_v\|_{\Scal_p}\left\|(1+\mathcal H_{2d})^{\frac s2}\chi_R\right\|_{L^2(\RR^{2d})}.
\end{aligned}
\]
It remains to estimate \(\left\|(1+\mathcal H_{2d})^{s/2}\chi_R\right\|_{L^2(\RR^{2d})}\). By the scaling argument in \cite[Proof of Theorem~4.2]{M26} and \eqref{Est-scaledN2noem}, for every
integer \(N\ge0\),
\[
    \left\|(1+\mathcal H_{2d})^{N/2}\chi_R\right\|_{L^2(\RR^{2d})}\lesssim_{d,N}\|\chi_R\|_{N,2}\lesssim_{d,N}R^{d+N},\,\,\, R\ge1.
\]
Let \(s\in \RR\) and \(s>d+1\), and choose integers \(N_0\le s\le N_1\) with \(s=(1-\theta)N_0+\theta N_1\). Since \(1+\mathcal H_{2d}\) is positive self-adjoint, the spectral theorem gives
\[
    \left\|(1+\mathcal H_{2d})^{s/2}\chi_R\right\|_2
    \le\left\|(1+\mathcal H_{2d})^{N_0/2}\chi_R\right\|_2^{1-\theta}\left\|(1+\mathcal H_{2d})^{N_1/2}\chi_R\right\|_2^\theta .
\]
Hence
\[
    \left\|(1+\mathcal H_{2d})^{s/2}\chi_R\right\|_{L^2(\RR^{2d})}\lesssim_{d,s}R^{d+s},\,\,\, R\ge1.
\]
Combining this with the preceding estimates gives the required result.
%\[
   % \|v*\psi_R\|_{L^p(\RR^{2d})}\lesssim  R^{2d+2}\|L_v\|_{\Scal_p}.
%\]
%For\(|\alpha|+|\beta|\le d+2\), the Leibniz rule and the scaling \(\chi_R(z)=\chi(z/R)\) give
%\[
%\|\partial^\alpha(z^\beta\chi_R)\|_{L^2(\RR^{2d})}\lesssim_{\alpha,\beta,\chi}R^{|\beta|-|\alpha|+d}.
%\]
%Indeed, the factor \(R^d\) comes from the \(L^2\)-scaling in the ambient space \(\RR^{2d}\). Since \(|\alpha|+|\beta|\le d+2\), we have
%\[
%|\beta|-|\alpha|+d\le 2d+2.
%\]
%Thus, for \(R\ge1\),
%\[
%\|\chi_R\|_{(d+2,2)}\lesssim R^{2d+2}.
%\]
%Combining the above estimates gives
%\[
%\|v*\psi_R\|_{L^p(\RR^{2d})} \lesssim R^{2d+2}\|L_v\|_{\Scal_p}.
%\]
%This proves the theorem.
\end{proof}

The preceding theorem proves one direction of the regularized comparison: it controls \(v*\psi_R\) in \(L^p(\RR^{2d})\) by the Schatten norm of \(L_v\). We now prove the converse estimate, namely a bound for \(L_{g*\psi_R}\) in \(\Scal_p\) in terms of \(\|g\|_{L^p(\RR^{2d})}\). We derive it by duality, and the same duality mechanism will also be used later in Corollary \ref{Cor-gammap}.

\begin{theorem}\label{Thm-dualityPsi}
Let \(R\ge1\), and let \(\psi_R\) be as in
Theorem~\ref{Thm-oneway}. Then, for every \(1\le p\leq\infty\) and every \(\varepsilon>0\),
\begin{equation}\label{Norm-dualityPsi}
(2\pi)^{-d}\left\|A\mapsto (L^{-1}A)*\psi_R\right\|_{\Scal_p\to L^p(\RR^{2d})}
=\left\| g\mapsto L_{g*\psi_R}\right\|_{L^{p'}(\RR^{2d})\to\Scal_{p'}}\lesssim_{d,p,\varepsilon} R^{2d+1+\varepsilon}.
\end{equation}
Here \(L^{-1}\) is understood in the distributional sense.
\end{theorem}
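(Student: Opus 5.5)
The plan is to prove the equality in \eqref{Norm-dualityPsi} by a trace-duality computation. The inequality will then follow at once from Theorem~\ref{Thm-oneway} applied with the exponent \(p\).

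\textbf{Setting up the duality.} Put \(T\colon A\mapsto (L^{-1}A)*\psi_R\) and \(S\colon g\mapsto L_{g*\psi_R}\). For \(A=L_v\) with \(v\in\Scal(\RR^{2d})\) and \(g\in\Scal(\RR^{2d})\), the Hilbert--Schmidt identity of Theorem~\ref{Thm-schmidthilbert}, in polarized form, reads
\[
\tr\bigl(L_v L_h^*\bigr)=(2\pi)^{-d}(v,h)_{L^2(\RR^{2d})}.
\]
Here one uses that \(L_h^*=L_{\bar h}\) for Weyl quantization, so the pairing is sesquilinear.

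\textbf{Moving the convolution across.} Next we transfer the convolution by \(\psi_R\) from one side to the other. Since \(\chi_R\) is real and radial, \(\psi_R=\SF\chi_R\) is real. It is also even, because \(\chi_R\) is even and \(\SF\) commutes with \(z\mapsto -z\). Therefore
\[
(v*\psi_R,g)_{L^2}=(v,g*\psi_R)_{L^2}.
\]
Combining the two identities gives
\[
\bigl(T(L_v),g\bigr)_{L^2}=(2\pi)^{d}\,\tr\bigl(L_v\,S(g)^*\bigr).
\]
So, up to the factor \((2\pi)^{d}\), the operator \(S\) is the Banach adjoint of \(T\) with respect to the dualities \((L^p)^*=L^{p'}\) and \((\Scal_p)^*=\Scal_{p'}\). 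The latter is the trace duality recalled in Section~\ref{Section-pre}.

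\textbf{Passing to norms.} Taking suprema over unit vectors, and using the density of Schwartz symbols (equivalently, of finite-rank or trace-class operators in \(\Scal_p\) and of \(\Scal\) in \(L^{p'}\) for \(p'<\infty\)), the norming property of the dualities yields
\[
(2\pi)^{-d}\|T\|_{\Scal_p\to L^p}=\|S\|_{L^{p'}\to\Scal_{p'}}.
\]
The factor \((2\pi)^{-d}\) is the constant from the Hilbert--Schmidt identity, matching the statement.

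\textbf{Main obstacle: the endpoints.} The step requiring care is the endpoint cases, where \(\Scal_\infty\) is not the dual of \(\Scal_1\) in the naive sense and \(L^\infty\) is not separable.

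For \(p=1\), \(p'=\infty\), I would compute \(\|S\|_{L^\infty\to\Scal_\infty}\) by testing against trace-class operators. For \(L^\infty\) data I would define \(S\) weakly, through the identity above. Its norm is then the sup over \(\|g\|_\infty\le1\) and \(\|L_v\|_{\Scal_1}\le1\) of the pairing, which equals the norm of \(T\colon\Scal_1\to L^1\).

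For \(p=\infty\), I would instead compute \(\|T\|_{\Scal_\infty\to L^\infty}\) by duality with \(L^1\). Here \(T(L_v)\) is a bounded function, being a convolution of a tempered distribution with a Schwartz function. Its sup norm is tested against \(g\in\Scal\), and \(\|S(g)\|_{\Scal_1}\) is recovered as the sup of \(|\tr(L_vS(g)^*)|\) over the unit ball of \(\Scal_\infty\), or already of compact operators.

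In both endpoint cases one must check that the pairing identity extends from Schwartz \(v\) to general \(v\) with \(L_v\in\Scal_p\). I would do this by weak-\(*\) approximation, for instance \(L_v\mapsto Q_NL_vQ_N\) with \(Q_N=\sum_{k\le N}P_k\), using continuity of convolution with \(\psi_R\in\Scal\) on \(\Scal'\).

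\textbf{Conclusion.} Once the equality is established, Theorem~\ref{Thm-oneway} states exactly that \(\|T\|_{\Scal_p\to L^p}\lesssim_{d,p,\varepsilon}R^{2d+1+\varepsilon}\). This gives the bound for \(\|S\|_{L^{p'}\to\Scal_{p'}}\). Since \(p\) ranges over all of \([1,\infty]\), the same bound holds with \(p\) and \(p'\) interchanged.
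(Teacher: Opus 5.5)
Your proposal is correct and is essentially the paper's proof. The bound comes from Theorem~\ref{Thm-oneway}, and the norm equality comes from the polarized Hilbert--Schmidt identity together with the fact that $\psi_R$ is real and even, which identifies $g\mapsto L_{g*\psi_R}$ as the (suitably normalized) Banach adjoint of $A\mapsto (L^{-1}A)*\psi_R$.

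The only difference is at the endpoints. The paper approximates $g$: by density in $L^{p'}$, and for $g\in L^\infty$ by Lemma~\ref{Lem-gLinfty} together with weak-$*$ continuity. You instead define the $L^\infty$ case weakly and approximate the operator by $Q_NAQ_N$. This also justifies the pairing identity for all $A\in\Scal_\infty$ when $p=\infty$, a point the paper treats only briefly.

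One small slip: a tempered distribution convolved with a Schwartz function need not be bounded. So the boundedness of $(L^{-1}A)*\psi_R$ for $A\in\Scal_\infty$ should be taken from Theorem~\ref{Thm-oneway} at $p=\infty$, not from that general fact.
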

\begin{proof}
By Remark~\ref{Rem-Linjective}, the Weyl correspondence \(f\mapsto L_f\) is a topological isomorphism at the level of tempered distributions. Hence \(L^{-1}A\) is well defined distributionally for every \(A\in\Scal_p\). Define
\begin{equation}\label{Def-PR}
    \PR A\defeq(2\pi)^{-d}(L^{-1}A)*\psi_R,\,\,\, A\in \Scal_p.
\end{equation}
Set
\[
    v\defeq (2\pi)^{-d}L^{-1}A
\]
in the distributional sense. Then
\[
    L_v=(2\pi)^{-d}A,\,\,\,
    \PR A=v*\psi_R.
\]
Therefore, by Theorem~\ref{Thm-oneway},
\begin{equation}\label{Norm-PR}
\begin{aligned}
    \|\PR A\|_{L^p(\RR^{2d})}=
    \|v*\psi_R\|_{L^p(\RR^{2d})}\lesssim_{d,p,\varepsilon}
    R^{2d+1+\varepsilon}\|L_v\|_{\Scal_p}\simeq_d
    R^{2d+1+\varepsilon}\|A\|_{\Scal_p}.
\end{aligned}
\end{equation}
Next we want to identify the Banach adjoint of \(\PR\). We first compute the adjoint on the Hilbert--Schmidt level. Let \(A\in\Scal_2\), and write \(A=L_f\), where \(f=L^{-1}A\in L^2(\RR^{2d})\). Then, for \(h\in L^2(\RR^{2d})\), the Hilbert--Schmidt identity gives
\[
    (L_h,A)_{\Scal_2}
    =
    (L_h,L_f)_{\Scal_2}
    =
    (2\pi)^{-d}(h,f)_{L^2(\RR^{2d})}.
\]
Thus the Hilbert-space adjoint of \(L\) is given by
\[
    L^*A=(2\pi)^{-d}L^{-1}A .
\]
Consequently,
\[
    \PR A=(L^*A)*\psi_R .
\]

Since \(\chi_R\) is real-valued and even, we have
\[
    \psi_R^*(z)\defeq \overline{\psi_R(-z)}=\psi_R(z).
\]
Therefore, for \(A\in\Scal_2\) and \(g\in L^2(\RR^{2d})\),
\[
\begin{aligned}
    (\PR A,g)_{L^2}=((L^*A)*\psi_R,g)_{L^2} = (L^*A,g*\psi_R^*)_{L^2}=(L^*A,g*\psi_R)_{L^2}= (A,L_{g*\psi_R})_{\Scal_2}.
\end{aligned}
\]
Hence,
\[
    \PR^*g=L_{g*\psi_R}, \,\,\,g\in L^{2}(\RR^{2d}).
\]

When \(1\leq p'<\infty\), since \(L^2(\RR^{2d})\cap L^{p'}(\RR^{2d})\) is dense in
\(L^{p'}(\RR^{2d})\), the identity above therefore extends by density and continuity to the Banach adjoint
\[
    \PR^*:L^{p'}(\RR^{2d})\to\Scal_{p'}.
\]
It remains to identify this extension with \(g\mapsto L_{g*\psi_R}\). Let
\(g_k\in L^2(\RR^{2d})\cap L^{p'}(\RR^{2d})\) and \(g_k\to g\) in \(L^{p'}(\RR^{2d})\). Then \(g_k*\psi_R\to g*\psi_R\) in \(\Scal'(\RR^{2d})\). So we have convergence of the corresponding Schwartz kernels according to (\ref{For-kernelrhou}),
    $$k_{\mathcal{F}_{\sigma}(g_{k}*\psi_{R})}\to k_{\mathcal{F}_{\sigma}(g*\psi_{R})}\,\,\mathrm{in}\,\,\mathcal{S}^{\prime}(\RR^{2d}).
    $$
    By (\ref{For-kernelfordistribution}), it follows that for any $u,v\in \Scal(\RR^d)$,
    $$
    \langle L_{g_k*\psi_{R}}u,v\rangle
    =\langle k_{\SF(g_k*\psi_{R})},\,v\otimes u\rangle
    \to \langle k_{\SF(g*\psi_{R})},\,v\otimes u\rangle
    =\langle L_{g*\psi_{R}}u,v\rangle,
    $$
    which implies that 
    $$
    L_{g_{k}*\psi_{R}}\to L_{g*\psi_{R}}\,\,\text{in weak operator topology of }\BB(\Scal,\Scal').
    $$
    On the other hand,
\[
    L_{g_k*\psi_R}=\PR^*g_k\to \PR^*g
    \quad\text{in }\Scal_{p'}.
\]
By uniqueness of the distributional limit, \( \PR^*g=L_{g*\psi_R}\).

For \(p'= \infty\), let
\(g\in L^\infty(\RR^{2d})\). By Lemma~\ref{Lem-gLinfty}, choose
\(g_k\in\Scal(\RR^{2d})\) such that
\[
    g_k\to g \,\,\text{in }\Scal'(\RR^{2d}),
    \,\,\,
    \|g_k\|_{L^\infty}\le \|g\|_{L^\infty}.
\]
For each \(k\), the Hilbert-space computation gives
\[
\PR^* g_k=L_{g_k*\psi_R}.
\]
Since \(\PR^*\) is the Banach adjoint of \(\PR:\Scal_1\to L^1\), it is weak-\(*\) continuous from \(L^\infty=(L^1)^*\) to \(\Scal_\infty=(\Scal_1)^*\). That means 
\[
    \tr\bigl((\PR^* g_k)A^*\bigr)\to\tr\bigl((\PR^*g)A^*\bigr), \,\,\,A\in\Scal_1.
\]

At the same time, since \(g_k*\psi_R\to g*\psi_R\) in \(\Scal'(\RR^{2d})\), the distributional Weyl correspondence gives
\[
    L_{g_k*\psi_R}\to L_{g*\psi_R}
    \,\,\,
    \text{in }\mathcal L(\Scal(\RR^d),\Scal'(\RR^d)).
\]
Testing against Schwartz functions in the kernel representation, the two limits must agree. Hence
\[
    \PR^* g=L_{g*\psi_R}
    \,\,\,
    \text{for every }g\in L^\infty(\RR^{2d}).
\]

Thus, for every \(1\le p\leq\infty\), \(\PR^*\) is precisely the operator
\begin{equation}\label{Def-PRad}
    \PR^*:L^p(\RR^{2d})\to \Scal_p,\,\,\,g\mapsto L_{g*\psi_R}.
\end{equation}
Since the norm of a bounded operator equals the norm of its Banach adjoint, we obtain
\[
    \|\PR\|_{\Scal_p\to L^p}=\|\PR^*\|_{L^{p'}\to\Scal_{p'}}.
\]
Substituting the definition of \(\PR\) and combining with \eqref{Norm-PR} gives exactly \eqref{Norm-dualityPsi}.
\end{proof}

    \begin{lemma}\label{Lem-gLinfty}
	Let $g\in L^{\infty}(\RR^{2d})$. Then there exists a sequence $(g_{k})_{k}\subset\mathcal{S}(\RR^{2d})$ such that 
	$$
	g_{k}\to g\,\,\mathrm{in}\,\,\mathcal{S}'(\RR^{2d})
	\,\,\,\text{and}\,\,\, 
	\|g_{k}\|_{L^{\infty}(\RR^{2d})}\le \|g\|_{L^{\infty}(\RR^{2d})}
	\text{ for all }k.
	$$
\end{lemma}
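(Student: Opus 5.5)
We regularize $g$ by mollifying and truncating it. Fix a nonnegative $\theta\in C_0^\infty(\RR^{2d})$ with $\int\theta=1$, and put $\theta_k(z)\defeq k^{2d}\theta(kz)$. Fix also a cutoff $\eta\in C_0^\infty(\RR^{2d})$ with $0\le\eta\le1$, $\eta\equiv1$ on $B(0,1)$, and put $\eta_k(z)\defeq\eta(z/k)$. We then set
\[
g_k\defeq \eta_k\,(g*\theta_k).
\]
Each $g_k$ is smooth, since it is the convolution of a bounded function with a test function. Each $g_k$ is also compactly supported, because of the factor $\eta_k$. Hence $g_k\in C_0^\infty(\RR^{2d})\subset\Scal(\RR^{2d})$.

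The uniform bound is immediate. Young's inequality with $\|\theta_k\|_{L^1}=1$ gives $\|g*\theta_k\|_{L^\infty}\le\|g\|_{L^\infty}$. Multiplying by $\eta_k$, which satisfies $0\le\eta_k\le1$, cannot increase this bound, so $\|g_k\|_{L^\infty}\le\|g\|_{L^\infty}$.

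For convergence in $\Scal'$, fix $\phi\in\Scal(\RR^{2d})$. We must show $\int g_k\phi\to\int g\phi$, and we split the difference into two terms:
\[
\int (g_k-g)\phi=\int(\eta_k-1)(g*\theta_k)\phi+\int\bigl(g*\theta_k-g\bigr)\phi .
\]
The first term is at most $\|g\|_{L^\infty}\int_{|z|\ge k}|\phi|$, which tends to $0$.

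The second term is the only point needing care, since $g*\theta_k\to g$ need not hold uniformly. We handle it by moving the convolution onto $\phi$ via Fubini:
\[
\int(g*\theta_k)\phi=\int g\,(\phi*\tilde\theta_k),\qquad \tilde\theta_k(z)=\theta_k(-z).
\]
Then $\phi*\tilde\theta_k\to\phi$ in $L^1$, because approximate identities converge in $L^1$ for $L^1$ functions. Therefore the second term is bounded by $\|g\|_{L^\infty}\|\phi*\tilde\theta_k-\phi\|_{L^1}\to0$.

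This proves $g_k\to g$ in $\Scal'(\RR^{2d})$ with $\|g_k\|_{L^\infty}\le\|g\|_{L^\infty}$ for all $k$. No real obstacle is expected: only weak-$*$ convergence against $L^1$ test functions is required, and that is all the argument uses.
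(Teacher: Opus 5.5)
Your proof is correct and uses the same construction as the paper, $g_k=\eta_k\,(g*\theta_k)$, with the same uniform bound coming from $\|\theta_k\|_{L^1}=1$ and $0\le\eta_k\le1$. The only difference is in the convergence step. The paper moves both the mollifier and the cutoff onto $\phi$ and appeals to convergence in $\Scal(\RR^{2d})$. You instead bound the cutoff error directly by $\|g\|_{L^\infty}\int_{|z|\ge k}|\phi|$ and need only $\phi*\tilde\theta_k\to\phi$ in $L^1$; this is slightly cleaner, and it in fact gives weak-$*$ convergence in $L^\infty$.
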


\begin{proof}
	Let $\eta\in\mathcal{S}(\RR^{2d})$ be a standard mollifier satisfying 
	$\int_{\RR^{2d}}\eta(x)\,dx=1$. 
	For $\varepsilon>0$ define the scaled mollifier
	$$
	\eta_{\varepsilon}(x):=\varepsilon^{-2d}\eta(x/\varepsilon).
	$$
	Then for any $g\in L^{\infty}(\RR^{2d})$, the convolution 
	$g*\eta_{\varepsilon}$ is $C^{\infty}$ and satisfies
	$$
	\|g*\eta_{\varepsilon}\|_{L^{\infty}(\RR^{2d})}\le \|g\|_{L^{\infty}(\RR^{2d})},
	\,\,\,\,
	g*\eta_{\varepsilon}\to g\,\,\,\text{in }\mathcal{S}'(\RR^{2d}),
	$$
	%For any $g\in L^{\infty}(\RR^{2d})$ and any $x\in\RR^{2d}$, we have
	%\[
	%|(g*\eta_{\varepsilon})(x)|
	%=\left|\int_{\RR^{2d}} g(x-y)\eta_{\varepsilon}(y)\,dy\right|
	%\le \int_{\RR^{2d}} |g(x-y)|\,\eta_{\varepsilon}(y)\,dy.
	%\]
	%Since $|g(x-y)|\le \|g\|_{L^{\infty}}$ for all $y$,
	%\[
	%|(g*\eta_{\varepsilon})(x)|
	%\le \|g\|_{L^{\infty}}\int_{\RR^{2d}}\eta_{\varepsilon}(y)\,dy
	%=\|g\|_{L^{\infty}}.
	%\]
	%Taking the supremum over $x\in\RR^{2d}$ yields
    %	\[
	%\|g*\eta_{\varepsilon}\|_{L^{\infty}}\le \|g\|_{L^{\infty}}.
	%\]
	since for any $\phi\in\mathcal{S}(\RR^{2d})$,
	$$
	\langle g*\eta_{\varepsilon}-g,\phi\rangle
	=\langle g,\eta_{\varepsilon}*\phi-\phi\rangle\to0
	\quad\mathrm{as}\,\,\varepsilon\to0,
	$$
	because $\eta_{\varepsilon}*\phi\to\phi$ in $\mathcal{S}(\RR^{2d})$.
	
	Next, choose smooth cutoff functions 
	$\zeta_{k}\in C_{c}^{\infty}(\RR^{2d})$ such that 
	$0\le \zeta_{k}\le1$, $\zeta_{k}(x)=1$ when $|x|\le k$, 
	and $\zeta_{k}$ is supported in $\{|x|\le k+1\}$. 
	Define
	$$
	g_{k}:=(g*\eta_{\frac{1}{k}})\,\zeta_{k}.
	$$
	Then $g_{k}\in C_{c}^{\infty}(\RR^{2d})\subset\mathcal{S}(\RR^{2d})$ and 
	$$
	\|g_{k}\|_{L^{\infty}(\RR^{2d})}
	\le \|g*\eta_{\frac{1}{k}}\|_{L^{\infty}(\RR^{2d})}
	\le \|g\|_{L^{\infty}(\RR^{2d})}.
	$$
	
	It remains to show $g_{k}\to g$ in $\mathcal{S}'(\RR^{2d})$. 
	For any $\phi\in\mathcal{S}(\RR^{2d})$,
	$$
	\langle g_{k}-g,\phi\rangle
	=\langle (g*\eta_{\frac{1}{k}})\zeta_{k}-g,\phi\rangle
	=\langle g,\eta_{\frac{1}{k}}*(\zeta_{k}\phi)-\phi\rangle.
	$$
	We decompose
	$$
	\langle g,\eta_{\frac{1}{k}}*(\zeta_{k}\phi)-\phi\rangle
	=\langle g,\eta_{\frac{1}{k}}*(\zeta_{k}\phi)-\zeta_{k}\phi\rangle
	+\langle g,\zeta_{k}\phi-\phi\rangle.
	$$
	The first term tends to zero since 
	$\eta_{\frac{1}{k}}*(\zeta_{k}\phi)\to\zeta_{k}\phi$ in $\mathcal{S}(\RR^{2d})$ 
	and $g\in L^{\infty}(\RR^{2d})\subset\mathcal{S}'$.
	The second term also tends to zero because 
	$\zeta_{k}\phi\to\phi$ in $\mathcal{S}(\RR^{2d})$: indeed, for any multiindices 
	$\alpha,\beta$,
	$$
	\sup_{x\in\RR^{2d}}|x^{\alpha}\partial^{\beta}((1-\zeta_{k})\phi)(x)|
	\le \sup_{|x|\ge k}|x^{\alpha}\partial^{\beta}\phi(x)|\to0
	\quad(k\to\infty),
	$$
	by the rapid decay of $\phi$. 
	Hence $\langle g_{k}-g,\phi\rangle\to0$, which proves
	$g_{k}\to g$ in $\mathcal{S}'(\RR^{2d})$.
\end{proof}

We now deduce the compact-support estimate from the regularized estimates. This is the step where the support assumption enters. If
\(\operatorname{supp}u\subset \overline{B(0,R)}\), then for
\(v=\SF u\) one has
\begin{equation}\label{Rela-vu}
(2\pi)^{-d}v*\psi_R=\SF((\SF v)\chi_R)=\SF(u\chi_R)=\SF u=v,\,\,\,\mathrm{in}\,\,\Scal'(\RR^{2d}).
\end{equation}
Thus the regularized comparison estimates can apply directly to \(\SF u\). Consequently, for compactly supported distributions, we obtain the announced refinement of \cite[Theorem 4.2]{M26}: the loss \(R^{(5d+2)|1-2/p|}\) is replaced by \(R^{(2d+1+\varepsilon)|1-2/p|}\) for \(\varepsilon>0\).

\begin{proof}[Proof of Theorem \ref{Mainthm}]
We first record two interpolated regularized estimates. In order to do this, we recall the definitions of operators \(\PR\) and \(\PR^*\) in \eqref{Def-PR} and \eqref{Def-PRad}. Consider the \(p=2\) bounds. If \(A=L_v\in\Scal_2\), then Plancherel's theorem and Theorem~\ref{Thm-schmidthilbert} give
\[
\begin{aligned}
    \|\PR A\|_{L^2(\RR^{2d})}
    \simeq_d\|v*\psi_R\|_{L^2(\RR^{2d})}\simeq_d\|(\SF v)\chi_R\|_{L^2(\RR^{2d})}\le\|\SF v\|_{L^2(\RR^{2d})}\simeq_d\|L_v\|_{\Scal_2}=\|A\|_{\Scal_2}.
\end{aligned}
\]
Hence
\begin{equation}\label{Bound-PR2}
    \|\PR\|_{\Scal_2\to L^2}
    \lesssim_d 1 .
\end{equation}
Similarly, for \(v\in L^2(\RR^{2d})\),
\[
\begin{aligned}
    \|\PR^* v\|_{\Scal_2}=\|L_{v*\psi_R}\|_{\Scal_2}\simeq_d
    \|v*\psi_R\|_{L^2(\RR^{2d})}\lesssim_d\|\SF v\|_{L^2(\RR^{2d})}=\|v\|_{L^2(\RR^{2d})}.
\end{aligned}
\]
Thus
\begin{equation}\label{Bound-PRad2}
    \|\PR^*\|_{L^2\to\Scal_2}\lesssim_d 1 .
\end{equation}
At the endpoints, Theorem~\ref{Thm-dualityPsi} give
\begin{equation}\label{Bound-PRendpoints}
     \|\PR^*\|_{L^{q'}\to\Scal_{q'}}=\|\PR\|_{\Scal_q\to L^q(\RR^{2d})}
    \lesssim_{d,\varepsilon}
    R^{2d+1+\varepsilon},
    \,\,\,q=1,\infty,
\end{equation}

Interpolating \eqref{Bound-PRendpoints} with \eqref{Bound-PR2}, and using the
interpolation identities for \(L^p\)-spaces and Schatten classes, we obtain
\[
    \|\PR\|_{\Scal_p\to L^p}
    \lesssim_{d,p,\varepsilon}
    R^{(2d+1+\varepsilon)\left|1-\frac2p\right|},
    \qquad 1\le p\le\infty .
\]
Similarly, interpolating \eqref{Bound-PRendpoints} with \eqref{Bound-PRad2}
gives
\[
    \|\PR^*\|_{L^p\to\Scal_p}
    \lesssim_{d,p,\varepsilon}
    R^{(2d+1+\varepsilon)\left|1-\frac2p\right|},
    \qquad 1\le p\le\infty .
\]

We now translate these operator estimates back to the regularized inequalities.
By the definition of \(\PR\) in \eqref{Def-PR}, for \(A=L_v\) we have \(\PR(L_v)= (2\pi)^{-d}v*\psi_R\).
Therefore,
\begin{equation}\label{Ine-Sp1}
    \|v*\psi_R\|_{L^p(\RR^{2d})}
    \lesssim_{d,p,\varepsilon}
    R^{(2d+1+\varepsilon)\left|1-\frac2p\right|}
    \|L_v\|_{\Scal_p}.
\end{equation}
Likewise, since \(\PR^*v=L_{v*\psi_R}\) as in \eqref{Def-PRad}, we get
\begin{equation}\label{Ine-Sp2}
    \|L_{v*\psi_R}\|_{\Scal_p}
    \lesssim_{d,p,\varepsilon}
    R^{(2d+1+\varepsilon)\left|1-\frac2p\right|}
    \|v\|_{L^p(\RR^{2d})}.
\end{equation}
%We now translate these operator estimates back to the regularized inequalities. Combining with the definitions of \(\PR\) and \(\PR^*\) in \eqref{Def-PR} and \eqref{Def-PRad}, together with the above estimates gives
%\begin{equation}\label{Ine-Sp1}
    %\|v*\psi_R\|_{L^p(\RR^{2d})}
    %\lesssim_{d,p,\varepsilon}
    %R^{(2d+1+\varepsilon)\left|1-\frac2p\right|}
    %\|L_v\|_{\Scal_p},
%\end{equation}
%and 
%\begin{equation}\label{Ine-Sp2}
   % \|L_{v*\psi_R}\|_{\Scal_p}
    %\lesssim_{d,p,\varepsilon}
   % R^{(2d+1+\varepsilon)\left|1-\frac2p\right|}
    %\|v\|_{L^p(\RR^{2d})}.
%\end{equation}

We now apply these estimates to the distribution \(u\). As in the proof of \cite[Theorem~4.2]{M26}, by translation invariance we may assume without loss of generality that \(z_0=0\). Thus \(\operatorname{supp}u\subset \overline{B(0,R)}\). Then in \(\Scal'(\RR^{2d})\), we have \eqref{Rela-vu}.

We first prove \eqref{Maininequality1}.  If \(\|L_{\SF u}\|_{\Scal_p}=\infty\), there is nothing to prove. Otherwise, applying \eqref{Ine-Sp1} to \(v=\SF u\) and using \eqref{Rela-vu}, we obtain
\[
\|\SF u\|_{L^p(\RR^{2d})}
=\|v\|_{L^p(\RR^{2d})}
\simeq_d\|v*\psi_R\|_{L^p(\RR^{2d})}
\lesssim_{d,p,\varepsilon} R^{(2d+1+\varepsilon)\left|1-\frac{2}{p}\right|}\|L_{v}\|_{\Scal_p}
= R^{(2d+1+\varepsilon)\left|1-\frac{2}{p}\right|}\|L_{\SF u}\|_{\Scal_p}.
\]

For \eqref{Maininequality2}, if \(\|\SF u\|_{L^p(\RR^{2d})}=\infty\), there is nothing to prove. Otherwise \(v=\SF u\in L^p(\RR^{2d})\), and applying \eqref{Ine-Sp2} to the same \(v\), again using \eqref{Rela-vu}, gives
\[
\|L_{\SF u}\|_{\Scal_p}
=\|L_v\|_{\Scal_p}
\simeq_d\|L_{v*\psi_R}\|_{\Scal_p}
\lesssim_{d,p,\varepsilon} R^{(2d+1+\varepsilon)\left|1-\frac{2}{p}\right|}\|v\|_{L^p(\RR^{2d})}
=R^{(2d+1+\varepsilon)\left|1-\frac{2}{p}\right|}\|\SF u\|_{L^p(\RR^{2d})}.
\]
The proof is complete.
\end{proof}

%This completes the proof of the refined restriction estimate. The polynomial dependence on \(R\) obtained above comes from the cutoff estimates for \(\chi_R\) and the interpolation between the endpoint bounds. 

We next introduce auxiliary constants in order to relate this dependence to lower bounds in both directions of the estimate. Recall the definitions of \(C_{d,p}(R)\) and \(D_{d,p}(R)\) in \eqref{Def-CpR} and \eqref{Def-DpR}, we introduce the regularized
comparison constants
\[
\begin{aligned}
    \widetilde{C}_p(R)&
%\defeq\|g\mapsto L_{g*\psi_R}\|_{L^p(\RR^{2d})\to \Scal_p}
\defeq\sup\left\{\frac{\|L_{g*\psi_R}\|_{\Scal_p}}{\|g\|_{L^p}}: g\in L^p(\RR^{2d}),\,g\neq0\right\}\\
\widetilde D_{d,p}(R)&\defeq\sup\left\{\frac{\|v*\psi_R\|_{L^p}}{\|L_v\|_{\Scal_p}}
:v\in\Scal'(\RR^{2d}),\,0<\|L_v\|_{\Scal_p}<\infty\right\}.
\end{aligned}
\]

These two constants will only serve as regularized intermediaries. They allow us to compare the compact-support constants with regularized operator norms, where Schatten duality gives a direct relation between the two directions.

\begin{proposition}\label{Prop-comparison-constants}
Let \(1\le p\le\infty\). Then, for every \(R\ge1\),
\begin{equation}\label{Rela-CCtilde}
C_{d,p}(R)\lesssim_d \widetilde C_{d,p}(2R) \lesssim_{\chi,d} C_{d,p}(2R).
\end{equation}
Similarly,
\begin{equation}\label{Rela-CCtildeprime}
D_{d,p}(R)\lesssim_d \widetilde D_{d,p}(2R)\lesssim_{\chi,d} D_{d,p}(2R).
\end{equation}
Moreover, 
\begin{equation}\label{Relat-CCprime} 
\widetilde  C_{d,p}(R)\simeq_d \widetilde D_{p'}(R). 
\end{equation}
\end{proposition}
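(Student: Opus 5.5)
We prove the three relations separately. The left inequalities in \eqref{Rela-CCtilde} and \eqref{Rela-CCtildeprime} use the reproducing identity \eqref{Rela-vu}; the right inequalities use that \(g*\psi_R\) is itself a compactly Fourier-supported symbol; and \eqref{Relat-CCprime} is exactly the duality of Theorem~\ref{Thm-dualityPsi}.

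\emph{Left inequalities.} Let \(u\in\mathcal E'(\RR^{2d})\) with \(\operatorname{supp}u\subset\overline{B(0,2R)}\), and put \(v=\SF u\). Since \(\chi_{2R}\equiv1\) on \(\overline{B(0,2R)}\), the computation \eqref{Rela-vu} with \(R\) replaced by \(2R\) gives
\[
(2\pi)^{-d}\,v*\psi_{2R}=v \quad\text{in }\Scal'(\RR^{2d}).
\]
Taking \(g=(2\pi)^{-d}v\) (we may assume \(v\in L^p\), otherwise the ratio is zero), we get \(L_{g*\psi_{2R}}=L_v\). Hence
\[
\frac{\|L_v\|_{\Scal_p}}{\|v\|_{L^p}}=\frac{\|L_{g*\psi_{2R}}\|_{\Scal_p}}{\|g\|_{L^p}}\le \widetilde C_{d,p}(2R),
\]
and taking the supremum over \(u\) yields \(C_{d,p}(R)\lesssim_d\widetilde C_{d,p}(2R)\). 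The argument for \(D\) is the same. If \(0<\|L_v\|_{\Scal_p}<\infty\), then \(\|v\|_{L^p}=(2\pi)^{-d}\|v*\psi_{2R}\|_{L^p}\le(2\pi)^{-d}\widetilde D_{d,p}(2R)\|L_v\|_{\Scal_p}\).

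\emph{Right inequalities.} This is the step I expect to be the main obstacle. Let \(g\in L^p\) and put \(u\defeq\SF(g*\psi_{2R})=(2\pi)^{d}(\SF g)\chi_{2R}\) by \eqref{Conv-sym}. Since \(\chi_{2R}\) is supported in \(\overline{B(0,4R)}\), this \(u\) lies in \(\mathcal E'\) with \(\operatorname{supp}u\subset\overline{B(0,4R)}=\overline{B(0,2\cdot 2R)}\), which is admissible for \(C_{d,p}(2R)\). Also \(\SF u=g*\psi_{2R}\). Therefore
\[
\|L_{g*\psi_{2R}}\|_{\Scal_p}\le C_{d,p}(2R)\,\|g*\psi_{2R}\|_{L^p}\le C_{d,p}(2R)\,\|\psi_{2R}\|_{L^1}\|g\|_{L^p}.
\]
Since \(\psi_{2R}(w)=(2R)^{2d}\psi_1(2Rw)\) is an \(L^1\)-dilate, \(\|\psi_{2R}\|_{L^1}=\|\psi_1\|_{L^1}\lesssim_\chi 1\) uniformly in \(R\). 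This gives \(\widetilde C_{d,p}(2R)\lesssim_{\chi,d}C_{d,p}(2R)\).

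For \(\widetilde D\), take \(v\) with \(0<\|L_v\|_{\Scal_p}<\infty\) and let \(w=v*\psi_{2R}\), so \(\SF w=(2\pi)^d(\SF v)\chi_{2R}\) is again supported in \(\overline{B(0,4R)}\). The definition of \(D_{d,p}(2R)\) gives \(\|w\|_{L^p}\le D_{d,p}(2R)\|L_w\|_{\Scal_p}\). It then remains to show \(\|L_{v*\psi_{2R}}\|_{\Scal_p}\lesssim\|L_v\|_{\Scal_p}\). For this I will use that \(L_{v*\psi}\) is an average of phase-space translates: up to a constant, \(L_{v*\psi}=\int \psi(z)\,\rho(z)L_v\rho(z)^*\,\dd z\). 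Each \(\rho(z)\) is unitary, so \(\|\rho(z)L_v\rho(z)^*\|_{\Scal_p}=\|L_v\|_{\Scal_p}\), and Minkowski's inequality gives \(\|L_{v*\psi_{2R}}\|_{\Scal_p}\le\|\psi_{2R}\|_{L^1}\|L_v\|_{\Scal_p}\lesssim_\chi\|L_v\|_{\Scal_p}\). The delicate points are justifying this Bochner integral for a distribution \(v\), which I will handle by approximation or by testing against Schwartz kernels, and treating the case \(L_w=0\) separately.

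\emph{Duality relation.} By the definition \eqref{Def-PR}, \(\widetilde D_{d,p}(R)\simeq_d\|\PR\|_{\Scal_p\to L^p}\), using \(A=L_v\) and \(\PR A=v*\psi_R\) up to the factor \((2\pi)^{-d}\). Likewise \(\widetilde C_{d,p'}(R)=\|\PR^*\|_{L^{p'}\to\Scal_{p'}}\), since \(\PR^*g=L_{g*\psi_R}\) for all \(g\in L^{p'}\) by the identification \eqref{Def-PRad} in the proof of Theorem~\ref{Thm-dualityPsi}. The equality of the norms of a bounded operator and its Banach adjoint then gives \(\widetilde C_{d,p'}(R)\simeq_d\widetilde D_{d,p}(R)\). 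Renaming \(p\leftrightarrow p'\) gives \eqref{Relat-CCprime}.
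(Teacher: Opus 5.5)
Your proof is correct and follows the paper's argument. You use the reproducing identity $(2\pi)^{-d}v*\psi_{2R}=v$ for the left inequalities, the admissible symbol $u=(2\pi)^{d}(\SF g)\chi_{2R}$ together with $\|\psi_{2R}\|_{L^1}=\|\psi_1\|_{L^1}$ for the right ones, and the adjoint identification $\PR^*g=L_{g*\psi_R}$ from Theorem~\ref{Thm-dualityPsi} for \eqref{Relat-CCprime}.

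Where the paper only says the $D$-case is ``analogous'', you correctly supply the extra ingredient it needs. That ingredient is the Schatten bound $\|L_{v*\psi_{2R}}\|_{\Scal_p}\le\|\psi_{2R}\|_{L^1}\|L_v\|_{\Scal_p}$, obtained from the covariance $\rho(z)L_v\rho(z)^*=L_{v(\cdot-z)}$, which holds with no extra constant in the paper's normalization. The only slip is harmless: your displayed ratio identity is missing a factor $(2\pi)^{-d}$ on the right, since $\|v\|_{L^p}=(2\pi)^{d}\|g\|_{L^p}$.
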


\begin{proof}
We prove \eqref{Rela-CCtilde}; the proof of
\eqref{Rela-CCtildeprime} is analogous.

Let \(u\in\mathcal E'(\RR^{2d})\) satisfy \(\operatorname{supp}u\subset \overline{B(0,2R)}\). Set \(g\defeq \SF u\). Since \(u\chi_{2R}=u\), by \eqref{Conv-sym} we have
\[
    (2\pi)^{-d}g*\psi_{2R}
    =\SF\big((\SF g)\chi_{2R}\big)
    =\SF(u\chi_{2R})
    = \SF u
    = g .
\]
Hence
\[
    \frac{\|L_{\SF u}\|_{\Scal_p}}{\|\SF u\|_{L^p}} = (2\pi)^{-d}\frac{\|L_{g*\psi_{2R}}\|_{\Scal_p}}{\|g\|_{L^p}} \le\widetilde C_{d,p}(2R).
\]
Taking the supremum over all such \(u\), we obtain
\[
C_{d,p}(R)\lesssim_d \widetilde C_{d,p}(2R).
\]

Conversely, let \(g\in L^p(\RR^{2d})\), and define \(u\defeq \SF(g*\psi_R)=(2\pi)^d(\SF g)\chi_R\).
In particular,
\[
\operatorname{supp}u\subset \operatorname{supp}\chi_R\subset \overline{B(0,2R)},\,\,\,
\SF u=g*\psi_R.
\]
Thus, by the definition of \( C_{d,p}(R)\) and Young's inequality,
\[
\|L_{g*\psi_R}\|_{\Scal_p}
=\|L_{\SF u}\|_{\Scal_p}
\le C_{d,p}(R)\|\SF u\|_{L^p}
=C_{d,p}(R)\|g*\psi_R\|_{L^p}\le C_{d,p}(R)\|g\|_{L^p}\|\psi_R\|_{L^1}.
\]
The scaling of \(\SF\) gives
\[
\|\psi_R\|_{L^1(\RR^{2d})}=\|\SF\chi_R\|_{L^1(\RR^{2d})}=\|\SF\chi\|_{L^1(\RR^{2d})}=(2\pi)^{-d}\|\widehat\chi\|_{L^1(\RR^{2d})}.
\]
Therefore
\[
\|L_{g*\psi_R}\|_{\Scal_p}
\lesssim_{\chi,d}C_{d,p}(R)\|g\|_{L^p}.
\]
Taking the supremum over all \(g\in L^p(\RR^{2d})\), and replacing \(R\) by \(2R\) gives the second inequality in
\eqref{Rela-CCtilde}.

The identity \eqref{Relat-CCprime} follows from the same duality argument as in Theorem~\ref{Thm-dualityPsi}.
\end{proof}

For \(1\le p\le\infty\), recall the definitions of \(\gamma_{d,p}\) and \(\eta_{d,p}\) in \eqref{Def-gammaeta}, and define the other two polynomial growth exponents as follows:
\[
\widetilde\gamma_{d,p}\defeq\inf\left\{s\ge0:\sup_{R\ge1}\frac{\widetilde C_{d,p}(R)}{R^s}<+\infty\right\},\,\,\,
\widetilde\eta_{d,p}\defeq\inf\left\{s\ge0:\sup_{R\ge1}\frac{\widetilde D_{d,p}(R)}{R^s}<+\infty\right\}.
\]

Combining with Theorem \ref{Thm-oneway}, Theorem \ref{Thm-dualityPsi} and Proposition~\ref{Prop-comparison-constants}, we obtain the corresponding identity for the polynomial growth exponents.
\begin{corollary}\label{Cor-gammap}
For every \(1\le p\le\infty\) and any \(\varepsilon>0\),
\[
\gamma_{d,p}=\widetilde\gamma_{d,p}=\widetilde\eta_{d,p^{\prime}}=\eta_{d,p'}\leq (2d+1+\varepsilon)\left|1-\frac2p\right|.
\]
\end{corollary}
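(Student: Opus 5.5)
The plan is to read off each equality from the two-sided comparisons already proved, and then take the upper bound from the interpolated estimates in the proof of Theorem~\ref{Mainthm}.

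\textbf{Step 1 (growth exponents are insensitive to doubling $R$).} If $f(R)\le C\,g(2R)$ for all $R\ge1$ and $g(R)\lesssim R^s$, then $f(R)\lesssim (2R)^s\simeq R^s$. Hence the two-sided chain \eqref{Rela-CCtilde} gives $\gamma_{d,p}\le\widetilde\gamma_{d,p}\le\gamma_{d,p}$. The factor $2$ only changes constants, and the infimum over admissible $s$ is unchanged. In the same way, \eqref{Rela-CCtildeprime} gives $\eta_{d,p'}=\widetilde\eta_{d,p'}$; this is just the case $p'$ of that relation.

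\textbf{Step 2 (duality).} Here $\widetilde C_{d,p}(R)$ is, up to the factor $(2\pi)^{-d}$, the norm of $g\mapsto L_{g*\psi_R}$ from $L^p$ to $\Scal_p$. Likewise, $\widetilde D_{d,p'}(R)$ is the norm of $A\mapsto (L^{-1}A)*\psi_R$ from $\Scal_{p'}$ to $L^{p'}$, after the substitution $A=L_v$. Theorem~\ref{Thm-dualityPsi} identifies these two norms, which is the content of \eqref{Relat-CCprime}. So $\widetilde C_{d,p}(R)\simeq_d\widetilde D_{d,p'}(R)$ uniformly in $R$, and therefore $\widetilde\gamma_{d,p}=\widetilde\eta_{d,p'}$.

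The main point to check is that the adjoint identification is valid at both endpoints, including $p=\infty$ with its weak-$*$ issue. The proof of Theorem~\ref{Thm-dualityPsi} already handles both cases $p'<\infty$ and $p'=\infty$ via Lemma~\ref{Lem-gLinfty}. One must also make sure the supremum defining $\widetilde D$, which ranges over distributions $v$ with $L_v\in\Scal_{p'}$, really equals the operator norm on $\Scal_{p'}$. This holds because $v\mapsto L_v$ is a bijection onto operators by Remark~\ref{Rem-Linjective}.

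\textbf{Step 3 (upper bound).} It suffices to bound $\widetilde C_{d,p}(R)$. The interpolated bound \eqref{Ine-Sp2} in the proof of Theorem~\ref{Mainthm} gives $\|L_{g*\psi_R}\|_{\Scal_p}\lesssim_{d,p,\varepsilon}R^{(2d+1+\varepsilon)|1-2/p|}\|g\|_{L^p}$. Hence $\widetilde C_{d,p}(R)\lesssim R^{(2d+1+\varepsilon)|1-2/p|}$, and so $\widetilde\gamma_{d,p}\le(2d+1+\varepsilon)|1-2/p|$.

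Since $|1-2/p|=|1-2/p'|$, the same bound holds for $\eta_{d,p'}$, consistent with Step 2. Because $\varepsilon>0$ is arbitrary, this also gives the bound $U_{d,p}$.
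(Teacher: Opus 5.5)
Your proposal is correct and follows the same route as the paper. The paper obtains the corollary by combining Proposition~\ref{Prop-comparison-constants} (growth exponents are unchanged under $R\mapsto 2R$), the duality identity \eqref{Relat-CCprime} from Theorem~\ref{Thm-dualityPsi}, and the regularized upper bounds. Taking the upper bound from the interpolated estimate \eqref{Ine-Sp2}, rather than from Theorem~\ref{Thm-oneway} alone, is the right choice, since only the interpolated bound produces the factor $|1-2/p|$. The factor $(2\pi)^{-d}$ in Step~2 belongs with $\widetilde D_{d,p'}$ rather than $\widetilde C_{d,p}$, but this is immaterial.
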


The preceding comparison allows us to transfer lower bounds between the two directions of the estimate. In the next section we construct explicit compactly supported examples and use them, together with Corollary~\ref{Cor-gammap}, to show that the dependence on the support radius is nontrivial in certain ranges of \(p\).

\section{Lower bounds for the dependence on the support radius}\label{Section-example}\label{Section-lowerbound}
 In this section, we construct compactly supported phase-space distributions for which the ratio between the \(L^p\)-norm of the symplectic Fourier transform and the corresponding Schatten norm grows like a positive power of \(R\) in certain ranges of \(p\), which implies the polynomial dependence on \(R\) in the previous section is unavoidable.

The examples are obtained by truncating Laguerre functions. Since \(\rho(\varphi_k)=P_k\), the Schatten norm is explicit. Moreover, the decay estimates in Lemma~\ref{Lem-Derivativephik} imply that truncation beyond the natural scale \(\sqrt{k}\) changes the relevant norms only by an exponentially small error. We begin by making this cutoff estimate precise.

\begin{lemma}\label{Lem-chivarphi}
Let \(N\in\NN\). There exists \(c>0\) such that for all \(k\) sufficiently large so that
\[
16k\ge 6(2k+d+N),
\]
and all \(R\ge 4\sqrt{k}\), we have the following estimate:
\[
\|(1-\chi_R)\varphi_k\|_{(N,2)}\lesssim_{d,N,\chi}\ee^{-cR^2}.
\]
\end{lemma}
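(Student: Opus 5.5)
The plan is to expand the seminorm \eqref{Def-N2norm} with the Leibniz rule and then use the pointwise Gaussian decay of Lemma~\ref{Lem-Derivativephik}. The support of \(1-\chi_R\) lies in \(\{|z|\ge R\}\), so every term will be integrated only over this exterior region. For \(|\alpha|+|\beta|\le N\), the Leibniz rule writes \(\partial^\alpha\bigl(z^\beta(1-\chi_R)\varphi_k\bigr)\) as a finite sum of terms
\[
\partial^{\alpha_1}(1-\chi_R)\,\partial^{\alpha_2}(z^\beta)\,\partial^{\alpha_3}\varphi_k,\qquad \alpha_1+\alpha_2+\alpha_3=\alpha,
\]
with coefficients depending only on \(N\) and \(d\).

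For the cutoff factor, \(|\partial^{\alpha_1}(1-\chi_R)|\le R^{-|\alpha_1|}\|\partial^{\alpha_1}\chi\|_\infty\lesssim_{\chi,N}1\) because \(R\ge1\). The same holds when \(\alpha_1=0\), since then the factor is simply bounded by \(1\). In every case this factor is supported in \(|z|\ge R\). For the monomial factor, \(|\partial^{\alpha_2}z^\beta|\lesssim_N(1+|z|)^N\).

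For \(\varphi_k\), we need \(|z|\ge\sqrt{6(2k+d+|\alpha_3|)}\) on the support, so that Lemma~\ref{Lem-Derivativephik} applies. Since \(|\alpha_3|\le N\), it is enough to have \(|z|^2\ge 6(2k+d+N)\). On the support we have \(|z|^2\ge R^2\ge16k\), and the hypothesis \(16k\ge6(2k+d+N)\) is exactly what is needed. Lemma~\ref{Lem-Derivativephik} then gives \(|\partial^{\alpha_3}\varphi_k(z)|\lesssim_{d,N}\ee^{-\gamma|z|^2/4}\). The implicit constant depends only on \(d\) and the multi-index, not on \(k\), as stated in that lemma.

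Collecting these bounds, each term has modulus at most \(C_{d,N,\chi}(1+|z|)^N\ee^{-\gamma|z|^2/4}\mathbf 1_{|z|\ge R}\). Its squared \(L^2(\RR^{2d})\)-norm is therefore bounded by
\[
\int_{|z|\ge R}(1+|z|)^{2N}\ee^{-\gamma|z|^2/2}\,\dd z\lesssim_{d,N}\ee^{-\gamma R^2/4}.
\]
To see this, absorb the polynomial and the polar Jacobian \(r^{2d-1}\) into half of the Gaussian, so that the remaining tail is \(\int_R^\infty \ee^{-\gamma r^2/4}\,r\,\dd r\lesssim\ee^{-\gamma R^2/4}\). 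Taking square roots and summing the finitely many terms gives the claim with \(c=\gamma/8\).

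The main obstacle is uniformity in \(k\). Lemma~\ref{Lem-Derivativephik} has to give constants and a rate \(\gamma\) that do not depend on \(k\), and I would rely on its statement for this. One also has to check that the threshold condition holds uniformly over all derivative orders up to \(N\), which is what the hypothesis on \(k\) together with \(R\ge4\sqrt k\) provides.
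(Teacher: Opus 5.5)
Your proposal is correct and follows essentially the same route as the paper. You expand \(\partial^\alpha\bigl(z^\beta(1-\chi_R)\varphi_k\bigr)\) by Leibniz, place every term in \(\{|z|\ge R\}\), use \(R^2\ge16k\ge6(2k+d+N)\) to apply Lemma~\ref{Lem-Derivativephik} uniformly in \(k\), and absorb the polynomial factor into the Gaussian before integrating the tail; your polar-coordinate computation giving \(c=\gamma/8\) simply spells out the paper's last step in more detail.
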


\begin{proof}
Fix multi-indices \(\alpha,\beta\in\NN^{2d}\) with \(|\alpha|+|\beta|\le N\), 
we need to estimate
\[
\left\|\partial^\alpha\bigl(z^\beta(1-\chi_R)\varphi_k\bigr)\right\|_{L^2(\RR^{2d})}.
\]
By Leibniz's rule,
\[
\partial^\alpha\bigl(z^\beta(1-\chi_R)\varphi_k\bigr)=\sum_{\eta_1+\eta_2+\eta_3=\alpha}C_{\alpha,\eta_1,\eta_2,\eta_3}\,\partial^{\eta_1}(1-\chi_R)\,\partial^{\eta_2}(z^\beta)\,\partial^{\eta_3}\varphi_k .
\]
For \(\eta_1\neq0\), the scaling \(\chi_R(z)=\chi(z/R)\) gives
\[
\partial^{\eta_1}(1-\chi_R)(z)=-R^{-|\eta_1|}(\partial^{\eta_1}\chi)(z/R).
\]
Hence \(\partial^{\eta_1}(1-\chi_R)\) is supported in the annulus
\[
\{z\in\RR^{2d}:R\le |z|\le 2R\},
\]
and satisfies
\[
|\partial^{\eta_1}(1-\chi_R)(z)|\lesssim_{\eta_1,\chi}R^{-|\eta_1|}.
\]
For \(\eta_1=0\), the factor \(1-\chi_R\) is supported in \(\{z\in\RR^{2d}:|z|\ge R\}\).
Thus every term in the above Leibniz expansion is supported in the region \(\{z\in\RR^{2d}:|z|\ge R\}\).

Since \(|\eta_3|\le |\alpha|\le N\), if \(R\ge4\sqrt{k}\), then \(R^2\ge16k\ge 6(2k+d+N)\ge 6(2k+d+|\eta_3|)\).
Consequently, on the support of each summand,
\[
|z|\ge R\ge \sqrt{6(2k+d+|\eta_3|)}.
\]
Lemma~\ref{Lem-Derivativephik} therefore gives
\[
|\partial^{\eta_3}\varphi_k(z)|\lesssim_{d,N}\ee^{-\frac{\gamma}{4}|z|^2},\,\,\,|z|\ge R.
\]

Moreover, since \(|\beta|\le N\),
\[
|\partial^{\eta_2}(z^\beta)|\lesssim_{\beta,N}(1+|z|)^N.
\]
Combining the preceding estimates, each summand in the Leibniz expansion is bounded by
\[
C_{d,N,\chi}(1+|z|)^N\ee^{-\frac{\gamma}{4}|z|^2}\mathbf 1_{\{|z|\ge R\}}.
\]
Absorbing the polynomial factor into the Gaussian decay, we obtain, 
\[
\left|\partial^\alpha\bigl(z^\beta(1-\chi_R)\varphi_k\bigr)(z)\right|
\lesssim_{d,N,\chi}\ee^{-\frac\gamma 8|z|^2}\mathbf 1_{\{|z|\ge R\}}.
\]
Therefore
\[
\begin{aligned}
\left\|
\partial^\alpha\bigl(z^\beta(1-\chi_R)\varphi_k\bigr)
\right\|_{L^2(\RR^{2d})}
\lesssim_{d,N,\chi}
\left(\int_{|z|\ge R}\ee^{-\frac\gamma 4|z|^2}\,\dd z\right)^{1/2} \lesssim_{d,N,\chi}
\ee^{-cR^2},
\end{aligned}
\]
for some \(c>0\). Finally, summing over all \(\alpha,\beta\in\NN^{2d}\) with \(|\alpha|+|\beta|\le N\), we obtain
\[
\|(1-\chi_R)\varphi_k\|_{(N,2)}
=\sum_{|\alpha|+|\beta|\le N}\left\|\partial^\alpha\bigl(z^\beta(1-\chi_R)\varphi_k\bigr)\right\|_{L^2(\RR^{2d})}
\lesssim_{d,N,\chi}\ee^{-cR^2},
\]
as desired.
\end{proof}

%We shall also need to control the \(L^1\)-norm of the symplectic Fourier transform of the cutoff error. For this purpose we use the standard fact that the \(L^1\)-norm of a Fourier transform is controlled by a sufficiently high Sobolev norm. We record the statement for completeness and omit the proof.

 %\begin{lemma}\label{Lem-SobolevFourier}
 	%For $g\in\mathcal{S}(\RR^{2d})$, we have 
 	%$$
 	%\|\widehat{g}\|_{L^{1}(\RR^{2d})}\lesssim_{d}\|g\|_{H^{d+2}(\RR^{2d})}.%\leq\|g\|_{(2d+2,2)}.
 	%$$
 %\end{lemma}
\iffalse
==============================fouriersobolevnorm==================================
\begin{proof}
	We write
	$$
	|\widehat{g}(w)|
	=(1+|w|^{2})^{-s/2}\,(1+|w|^{2})^{s/2}|\widehat{g}(w)|.
    $$
	%Hence
	%$$
	%\|\widehat{g}\|_{L^{1}(\RR^{2d})}
	%=\int_{\RR^{2d}}(1+|w|^{2})^{-s/2}\,
	%(1+|w|^{2})^{s/2}|\widehat{g}(w)|\,\mathrm{d}w.
	%$$
	Applying H\"older's inequality gives
	$$
\|\widehat{g}\|_{L^{1}(\RR^{2d})}
	\le 
	\|(1+|w|^{2})^{-s/2}\|_{L^{2}(\RR^{2d})}
	\,
	\|(1+|w|^{2})^{s/2}\widehat{g}\|_{L^{2}(\RR^{2d})}.
	$$
	The second factor is exactly the Sobolev norm
	\(\|g\|_{H^{s}(\RR^{2d})}\).
	Since $s>d$, we have
	$$
	\|(1+|w|^{2})^{-s/2}\|_{L^{2}(\RR^{2d})}<\infty.
	$$
	Therefore,
$$
		\|\widehat{g}\|_{L^{1}(\RR^{2d})}
		\lesssim_{d,s}\,\|g\|_{H^{s}(\RR^{2d})},
		\,\,\, s>d.
$$
Taking $s=d+2$, then we have the desired estimate.
%$$
%\|\widehat{g}\|_{L^{1}(\RR^{2d})}\lesssim_{d}\,\|g\|_{H^{d+2}(\RR^{2d})},%\leq\|g\|_{(2d+2,2)},
%$$
%as desired.
\end{proof}
==============================================================================
\fi
We now build the example and give the following proposition. 
%The uncut Laguerre function \(\widetilde u=\varphi_{k_0}\) is not compactly supported, but it has two crucial advantages: its Weyl operator is exactly the Hermite projection \(P_{k_0}\), and the \(L^p\)-norms of \(\SF\widetilde u\) are explicitly known from Lemmas~\ref{Lem-Lpnormphik} and \ref{Lem-Laguerrefourier}. Multiplying by \(\chi_R\), with \(R=4\sqrt{k_0}\), gives a compactly supported example without changing the leading-order behavior when \(R\) is large according to Lemma \ref{Lem-chivarphi}.

\begin{proposition}\label{Prop-explicitexample}
Let \(p_c\defeq \frac{4d}{2d-1}\) and 
\[
\widetilde u(z)\defeq \varphi_{k_0}(z),
\,\,\,
u(z)\defeq \chi_R(z)\widetilde u(z),
\,\,\,
R=4\sqrt{k_0},
\]
where \(k_0\) is sufficiently large. 
Then, for \(1\le p\le\infty\) such that \(p\neq p_c\), with the convention \(1/\infty=0\),
\begin{equation}\label{Ratio-R}
\frac{\|\SF u\|_{L^p(\RR^{2d})}}{\|L_{\SF u}\|_{\Scal_p}}
\simeq_{d,p}
\begin{cases}
R^{\,2(d-1)-\frac{2(2d-1)}{p}}, & p_c<p\le\infty,\\[4pt]
R^{\,\frac2p-1}, & 1\le p<p_c.
\end{cases}
\end{equation}
\end{proposition}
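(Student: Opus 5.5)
The plan is to compare the truncated example $u=\chi_R\varphi_{k_0}$ with the untruncated Laguerre function $\widetilde u=\varphi_{k_0}$. For $\widetilde u$ both norms in \eqref{Ratio-R} are explicit. The truncation error should then be exponentially small in $R^2$, while the main terms are only polynomial in $k_0$.

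\emph{Step 1 (untruncated computation).} By \eqref{Ide-projection} and $\rho(\widetilde u)=L_{\SF\widetilde u}$ we have $L_{\SF\widetilde u}=P_{k_0}$. Hence \eqref{For-normpPk} gives $\|L_{\SF\widetilde u}\|_{\Scal_p}=d_{k_0}^{1/p}\simeq_d k_0^{(d-1)/p}$. By Lemma~\ref{Lem-Laguerrefourier}, $\SF\widetilde u=2^d(-1)^{k_0}\varphi_{k_0}(2\,\cdot)$, so that $\|\SF\widetilde u\|_{L^p}=2^{d-2d/p}\|\varphi_{k_0}\|_{L^p}$. Lemma~\ref{Lem-Lpnormphik} then gives this norm as $\simeq k_0^{d-1-d/p}$ for $p>p_c$ and $\simeq k_0^{-1/2+d/p}$ for $p<p_c$. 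Dividing, the ratio is $k_0^{d-1-(2d-1)/p}$ in the first case and $k_0^{1/p-1/2}$ in the second. Since $k_0=R^2/16$, these are exactly the two powers of $R$ in \eqref{Ratio-R}.

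\emph{Step 2 (error terms).} Set $e\defeq u-\widetilde u=-(1-\chi_R)\varphi_{k_0}$. It is radial, since $\chi$ is radial, and it is Schwartz. Lemma~\ref{Lem-chivarphi} with $N=d+2$ (valid since $R=4\sqrt{k_0}$ and $k_0$ is large) gives $\|e\|_{(d+2,2)}\lesssim \ee^{-cR^2}$.

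For the operator side, the inclusions \eqref{Rela-Inclusion} and Lemma~\ref{Lem-S1normN2norm} give
\[
\|L_{\SF e}\|_{\Scal_p}\le\|\rho(e)\|_{\Scal_1}\lesssim_d\|e\|_{(d+2,2)}\lesssim \ee^{-cR^2}.
\]

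For the function side, I bound $\|\SF e\|_{L^p}\le\|\SF e\|_{L^1}+\|\SF e\|_{L^\infty}$.
\begin{itemize}
\item The $L^\infty$ part satisfies $\|\SF e\|_{L^\infty}\lesssim\|e\|_{L^1}$. Using Cauchy--Schwarz with the weight $(1+|z|^2)^{-s/2}$, $s>d$, this is bounded by $\|e\|_{(d+1,2)}$.
\item For the $L^1$ part, write $|\SF e|=(1+|w|^2)^{-s/2}(1+|w|^2)^{s/2}|\SF e|$ with $s=d+1>d$. By Cauchy--Schwarz and Plancherel, $\|\SF e\|_{L^1}\lesssim_d \sum_{|\alpha|\le d+1}\|\partial^\alpha e\|_{L^2}\le\|e\|_{(d+2,2)}$.
\end{itemize}
Both pieces are therefore $\lesssim \ee^{-cR^2}$.

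\emph{Step 3 (conclusion).} By the triangle inequality,
\[
\|L_{\SF u}\|_{\Scal_p}=d_{k_0}^{1/p}+O(\ee^{-cR^2}),
\qquad
\|\SF u\|_{L^p}=\|\SF\widetilde u\|_{L^p}+O(\ee^{-cR^2}).
\]
The main terms are bounded below by polynomials in $k_0\simeq R^2$: indeed $d_{k_0}\ge1$, and the $L^p$ asymptotics of Step~1 have polynomial size. So for $k_0$ sufficiently large (depending on $d,p$) each norm is comparable to its main term, and the ratio in \eqref{Ratio-R} follows from Step~1.

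The main obstacle is Step~2, and specifically making the $L^p$ control of $\SF e$ uniform over $1\le p\le\infty$ using only the seminorm available from Lemma~\ref{Lem-chivarphi}. The $L^1$--$L^\infty$ splitting with a Sobolev bound handles this. The only care needed is that the number of derivatives and weights used stays within $N=d+2$, or else one invokes Lemma~\ref{Lem-chivarphi} with a larger $N$, which merely requires $k_0$ larger.
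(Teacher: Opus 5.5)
Your proposal is correct and follows essentially the same route as the paper. You compute the untruncated example exactly via $\rho(\varphi_{k_0})=P_{k_0}$ and Lemma~\ref{Lem-Laguerrefourier}, and show the truncation errors are $O(\ee^{-cR^2})$ using Lemma~\ref{Lem-chivarphi} with $N=d+2$; these errors are controlled on the operator side by Lemma~\ref{Lem-S1normN2norm} (you correctly note that $e$ is radial) and on the function side by $L^1$/$L^\infty$ bounds with a Sobolev estimate. Your weighted Cauchy--Schwarz bound $\|e\|_{L^1}\lesssim\|e\|_{(d+1,2)}$ usefully justifies a step the paper only asserts.
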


\begin{proof}
We first estimate the Schatten norm of \(L_{\SF \widetilde u}\) and \(L^p\)-norm of \(\SF \widetilde u\). By the Hermite--Laguerre
correspondence \eqref{Ide-projection},
\[
L_{\SF\widetilde u}
=\rho(\SF(\SF\widetilde u))
=\rho(\widetilde u)=\rho(\varphi_{k_0})=P_{k_0},
\]
Consequently,
\begin{equation}\label{Est-Spnormexplictexample}
\|L_{\SF\widetilde u}\|_{\Scal_p}
=\|P_{k_0}\|_{\Scal_p}
=d_{k_0}^{1/p}
\simeq_d k_0^{\frac{d-1}{p}},
\end{equation}
with the convention that \(d_{k_0}^{1/\infty}=1\) and using \eqref{For-normpPk}.

By Lemma~\ref{Lem-Laguerrefourier},
\[
\SF\widetilde u(z)
=\SF\varphi_{k_0}(z)
=2^d(-1)^{k_0}\varphi_{k_0}(2z).
\]
Therefore
\begin{equation}\label{Est-Lpnormexplictexample}
\|\SF\widetilde u\|_{L^p(\RR^{2d})}
=2^d\|\varphi_{k_0}(2\cdot)\|_{L^p(\RR^{2d})}
=2^{d-\frac{2d}{p}}\|\varphi_{k_0}\|_{L^p(\RR^{2d})}
\simeq_{d,p}\|\varphi_{k_0}\|_{L^p(\RR^{2d})},
\end{equation}
which are also polynomial terms according to Lemma~\ref{Lem-Lpnormphik}.

Next we show that replacing \(\widetilde u\) by \(u=\chi_R\widetilde u\) produces only an exponentially small error, both on the Schatten side and on the \(L^p\)-side. Lemma~\ref{Lem-chivarphi}, with \(N=d+2\), gives
\begin{equation}\label{Est-N2normuutilde}
\|u-\widetilde u\|_{(d+2,2)}=\|(1-\chi_R)\varphi_{k_0}\|_{(d+2,2)}\lesssim_{d,\chi}\ee^{-cR^2}.
\end{equation}
Combining with Lemma~\ref{Lem-S1normN2norm} and the continuous inclusion relation \eqref{Rela-Inclusion}, for any \(1\leq p\leq \infty\),
\begin{equation}\label{error}
\|L_{\SF u}-L_{\SF\widetilde u}\|_{\Scal_p}\leq\|L_{\SF u}-L_{\SF\widetilde u}\|_{\Scal_1}
\lesssim\|u-\widetilde u\|_{(d+2,2)}
\lesssim\ee^{-cR^2}=\ee^{-16ck_0}.
\end{equation}
%On the other hand, by the same argument as that in Lemma \ref{Lem-chivarphi},
%\[
%\|u-\widetilde u\|_{L^1(\RR^{2d})}\lesssim\|u-\widetilde u\|_{(d+2,2)}\lesssim_{d,\chi}\ee^{-cR^2}.
%\]
%Moreover, by the \(L^1\to\mathcal B(L^2)\) bound for the integratedSchr\"odinger representation (see e.g. \cite[proposition 3.1]{LS25}),
%\[\|L_{\SF u}-L_{\SF\widetilde u}\|_{\Scal_\infty}=\|\rho(u-\widetilde u)\|_{\mathcal B(L^2)}\lesssim\|u-\widetilde u\|_{L^1(\RR^{2d})}\lesssim\ee^{-cR^2}.\]Interpolating between \(\Scal_1\) and \(\Scal_\infty\), we get, for\(1\le p\le\infty\),\[\|L_{\SF u}-L_{\SF\widetilde u}\|_{\Scal_p}\lesssim\ee^{-cR^2}=\ee^{-16ck_0}.\]

On the other hand,
\[
\|\SF(u-\widetilde u)\|_{L^\infty(\RR^{2d})}
\le\|u-\widetilde u\|_{L^1(\RR^{2d})}
\lesssim\ee^{-cR^2}.
\]
Moreover, using the standard Sobolev estimate
\[
    \|\widehat g\|_{L^1(\RR^{2d})}\lesssim_d\|g\|_{H^{d+2}(\RR^{2d})},\,\,\, g\in\Scal(\RR^{2d}),
\]
together with \eqref{Rela-twotransforms}, \eqref{Def-N2norm} and \eqref{Est-N2normuutilde},
\[
\|\SF(u-\widetilde u)\|_{L^1(\RR^{2d})}\simeq_d\|\widehat{u-\widetilde u}\|_{L^1(\RR^{2d})}
\lesssim_d\|u-\widetilde u\|_{H^{d+2}(\RR^{2d})}
\lesssim\|u-\widetilde u\|_{(d+2,2)}
\lesssim\ee^{-cR^2}.
\]
Interpolating between \(L^1\) and \(L^\infty\), we obtain
\[
\|\SF(u-\widetilde u)\|_{L^p(\RR^{2d})}
\lesssim\ee^{-cR^2}=\ee^{-16ck_0},
\,\,\, 1\le p\le\infty.
\]
This error and the one in \eqref{error}, are exponentially small and therefore negligible compared with the polynomial main terms. Therefore, together with \eqref{Est-Spnormexplictexample} and \eqref{Est-Lpnormexplictexample},
\[
\|L_{\SF u}\|_{\Scal_p}
\simeq_d\|L_{\SF\widetilde u}\|_{\Scal_p}
\simeq_d k_0^{\frac{d-1}{p}}.
\]
and 
\[
\|\SF u\|_{L^p(\RR^{2d})}\simeq_d\|\SF\widetilde u\|_{L^p(\RR^{2d})}\simeq_{d,p}\|\varphi_{k_0}\|_{L^p(\RR^{2d})}.
\]

By Lemma~\ref{Lem-Lpnormphik}, together with the two estimates above, we obtain the explicit estimate 
\begin{equation*}%\label{Ratio-k}
\frac{\|\SF u\|_{L^p(\RR^{2d})}}{\|L_{\SF u}\|_{\Scal_p}}
\simeq_{d,p}
\begin{cases}
k_0^{\,d-1-\frac{2d-1}{p}}, & p_c<p\le\infty,\\[4pt]
k_0^{\,\frac1p-\frac12}, & 1\le p<p_c.
\end{cases}
\end{equation*}
Substituting $R=4\sqrt{k_{0}}$ yields (\ref{Ratio-R}).
\end{proof}

Combining the explicit lower bounds from Proposition~\ref{Prop-explicitexample} with the comparison of growth exponents in Corollary~\ref{Cor-gammap} gives the promised nontriviality statement for the dependence on \(R\).

\begin{proof}[Proof of Theorem \ref{Thm-nontrivial} and Corollary \ref{Cor-nontrivial}]
Firstly, by the definitions of \(\gamma_{d,p}\) and \(\eta_{d,p}\) in \eqref{Def-gammaeta}, Theorem \ref{Mainthm} gives the upper bounds.

By Proposition~\ref{Prop-explicitexample} and definition of \(a_{d,p}\) in \eqref{Def-ap}, we have 
\[
\eta_{d,p}\ge \max\{a_{d,p},0\}, \,\,\, \gamma_{d,p}\ge \max\{-a_{d,p},0\}. 
\]
Combining this with Corollary~\ref{Cor-gammap}, namely \(\gamma_{d,p}=\eta_{d,p'}\), gives \begin{equation*}
\gamma_{d,p}\ge \max\{a_{d,p'},-a_{d,p},0\},\,\,\,\eta_{d,p}\ge \max\{a_{d,p},-a_{d,p'},0\},
\end{equation*}
while the duality contribution for \(\eta_{d,p}\) does not give any additional information. Thus, we obtain \eqref{For-etagamma}.

A direct check from the definition of \(a_{d,p}\) shows that \(\underline{\gamma_{d,p}}>0\) for \eqref{Range1} and that \(\underline{\eta_{d,p}}>0\) for \eqref{Range2}. This proves the two nontriviality statements. Indeed, if the corresponding estimate held with a constant independent of \(R\), then the associated constant \(C_{d,p}(R)\) or \(D_{d,p}(R)\) would be uniformly bounded in \(R\), forcing \(\underline{\gamma_{d,p}}=0\) or \(\underline{\eta_{d,p}}=0\), respectively.
\end{proof}

\begin{remark}
Theorem~\ref{Thm-nontrivial} also shows that the upper bound for the reverse
comparison estimate becomes asymptotically sharp in high dimensions. Indeed,
fix \(p>2\). Then, for all sufficiently large \(d\), the lower bound for
\(\eta_{d,p}\) is given by
\[
    \underline{\eta_{d,p}}=a_{d,p}=2(d-1)-\frac{2(2d-1)}{p}.
\]
Since
\[
    \underline{\eta_{d,p}}
    \leq \eta_{d,p}
    \leq U_{d,p},
\]
from Theorem \ref{Thm-nontrivial}, we have
\[
    1
    \leq \frac{U_{d,p}}{\eta_{d,p}}
    \leq
    \frac{U_{d,p}}{\underline{\eta_{d,p}}}
    \to 1,\,\,\,
    \text{as } d\to\infty.
\]
Thus, for every fixed \(p>2\), the exponent in the reverse comparison estimate
\eqref{Maininequality1} is asymptotically optimal as \(d\to\infty\).
%Proposition~\ref{Prop-explicitexample} also shows that the reverse comparison estimate \eqref{Maininequality1} is close to optimal for \(p>2+\frac1{d-1}\). %Indeed, in this range, \(a_p>0\), and the gap between the upper exponent in \eqref{Maininequality1} and the lower-bound exponent is
%\[
    %(2d+1+\varepsilon)\left(1-\frac2p\right)-2(d-1)+\frac2p(2d-1)=3-\frac4p+\varepsilon\left(1-\frac2p\right),\,\,\,\text{for any }\varepsilon>0.
%\]
%Equivalently, the upper bound differs from the lower bound by at most the additional factor \(R^{4-6/p}\). This factor is bounded by \(R^4\), and is strictly smaller than \(R^4\) for every finite \(p\) in this range.
\end{remark}

\section*{Acknowledgements}
The author is deeply grateful to Alessio Martini for his valuable suggestions and many insightful discussions during the preparation of this work. The author also gratefully acknowledges the financial support of the China Scholarship Council (Grant No. 202406290140).

\vspace{0.5em}

\begingroup
\footnotesize
\setlength{\parindent}{0pt}
\setlength{\parskip}{3pt}
\textsc{Jie Liu}\\
School of Mathematics and Statistics, Northwestern Polytechnical University, Xi'an 710129, China\\
Dipartimento di Scienze Matematiche, Politecnico di Torino, Corso Duca degli Abruzzi 24, 10129 Torino, Italy\\
\emph{E-mail address}: \texttt{jie.liu@polito.it}

\endgroup

\begin{liucomment}
    ================ Standard Euclidean Fourier transform of a Gaussian==============
    Write $z=(z_{1},\dots,z_{2d})$, $w=(w_{1},\dots,w_{2d})$. Then
	$$
	\mathrm{e}^{-a|z|^{2}} \mathrm{e}^{-iw\cdot z}
	= \prod_{j=1}^{2d} \mathrm{e}^{-a z_{j}^{2}}\mathrm{e}^{-i w_{j} z_{j}},
	$$
	and the integral factors:
	$$
	\int_{\RR^{2d}}\mathrm{e}^{-a|z|^{2}}\mathrm{e}^{-iw\cdot z}\,\mathrm{d}z
	= \prod_{j=1}^{2d}
	\left( \int_{\mathbb R} \mathrm{e}^{-a x^{2}} \mathrm{e}^{-i w_{j} x}\,\mathrm{d}x \right).
	$$
	Claim that
	$$
	\int_{\mathbb R} \mathrm{e}^{-a x^{2}} \mathrm{e}^{-i w x}\,\mathrm{d}x
	= \sqrt{\frac{\pi}{a}}\,\mathrm{e}^{-\frac{w^{2}}{4a}}.
	$$
	Then the desired identity can be proven.
	
	From now on, we prove the claim under the standard hypothesis $\Re(a)>0,$
	which guarantees absolute convergence of the integral and decay of the Gaussian at infinity. Write the quadratic form in the exponent as
	$$
	-a x^{2} - i w x = -a\Big(x + \frac{i w}{2a}\Big)^{2} - \frac{w^{2}}{4a}.
	$$
	Thus the integrand factorizes:
	$$
	\mathrm{e}^{-a x^{2}} e^{-i w x}
	=\mathrm{e}^{-a\left(x + \frac{i w}{2a}\right)^{2}}\, \mathrm{e}^{-\frac{w^{2}}{4a}}.
	$$
	Then by change of variable
	$$
	u = x + \frac{i w}{2a}, \qquad x = u - \frac{i w}{2a},\qquad \mathrm{d}x = \mathrm{d}u,
	$$
	the real line $x\in\mathbb{R}$ is carried to the horizontal line $\{u\in\mathbb{C}:\; \Im u = \tfrac{w}{2a\,'}\}$ in the complex $u$-plane. Because $\Re(a)>0$, the function $u\mapsto \mathrm{e}^{-a u^{2}}$ is entire and decays exponentially
	as $|\Re u|\to\infty$ along any horizontal line in the complex plane. Consequently,
	by Cauchy's theorem, we may shift the contour of integration back to the real axis without changing the integral:
	$$
	\int_{x\in\mathbb{R}} \mathrm{e}^{-a\left(x + \frac{i w}{2a}\right)^{2}}\,\mathrm{d}x
	= \int_{u\in\mathbb{R}} \mathrm{e}^{-a u^{2}}\,\mathrm{d}u.
	$$
	Therefore
	$$
	\int_{\mathbb{R}} \mathrm{e}^{-a x^{2}} \mathrm{e}^{-i w x}\,\mathrm{d}x
	= \mathrm{e}^{-\frac{w^{2}}{4a}} \int_{\mathbb{R}} \mathrm{e}^{-a u^{2}}\,\mathrm{d}u= \sqrt{\frac{\pi}{a}}\, \mathrm{e}^{-\frac{w^{2}}{4a}},
	$$
	which proves the claim.
    ====================================================================
\end{liucomment}

		\end{document}